\newcommand{\rrvert}{\vert}
\newcommand{\rrVert}{\Vert}
\newcommand{\llvert}{\vert}
\newcommand{\llVert}{\Vert}
\newtheorem{proposition}{Proposition}[section]
\newtheorem{theorem}[proposition]{Theorem}
\newtheorem{lemma}[proposition]{Lemma}
\newtheorem{condition}[proposition]{Condition}
\newtheorem{lem}[proposition]{Lemma}
\begin{document}
\begin{frontmatter}

\title{Escaping from an attractor: Importance sampling and rest points I}
\runtitle{Escaping from an attractor}

\begin{aug}
\author[A]{\fnms{Paul} \snm{Dupuis}\thanksref{T1}\ead[label=e1]{dupuis@dam.brown.edu}},
\author[B]{\fnms{Konstantinos} \snm{Spiliopoulos}\corref{}\thanksref{T2}\ead[label=e2]{kspiliop@math.bu.edu}}
\and
\author[C]{\fnms{Xiang} \snm{Zhou}\thanksref{T3}\ead[label=e3]{xiang.zhou@cityu.edu.hk}}
\runauthor{P. Dupuis, K. Spiliopoulos and X. Zhou}
\affiliation{Brown University, Boston University and City University of Hong Kong}
\address[A]{P. Dupuis\\
Lefschetz Center for Dynamical
Systems\\
Division of Applied Mathematics\\
Brown University\\
Providence, Rhode Island 02912\\
USA\\
\printead{e1}}
\address[B]{K. Spiliopoulos\\
Department of Mathematics \\
\quad and Statistics\\
Boston University\\
Boston, Massachusetts 02215\\
USA\\
\printead{e2}}
\address[C]{X. Zhou\\
Department of Mathematics\\
City University of Hong Kong\\
Kowloon Tong\\
Hong Kong, P6715\\
\printead{e3}}
\end{aug}
\thankstext{T1}{Supported in part by NSF Grants DMS-10-08331, DMS-13-17199, Department of Energy Grant
DE-SCOO02413 and the Air
Force Office of Scientific Research Grant FA9550-12-1-0399.}
\thankstext{T2}{Supported in part by a start-up fund from
Boston University, Department of Energy Grant \mbox{DE-}SCOO02413 and, during
revisions of this article, by NSF Grant DMS-13-12124.}
\thankstext{T3}{Supported in part by a start-up fund from City
University of Hong Kong and Department of Energy Grant DE-SCOO02413.}

%
\received{\smonth{3} \syear{2013}}
\revised{\smonth{6} \syear{2014}}

\begin{abstract}
We discuss importance sampling schemes for the estimation of finite time exit
probabilities of small noise diffusions that involve escape from an
equilibrium. A factor that complicates the analysis is that rest points are
included in the domain of interest. We build importance sampling schemes with
provably good performance both pre-asymptotically, that is, for fixed size of the
noise, and asymptotically, that is, as the size of the noise goes to zero, and
that do not degrade as the time horizon gets large. Simulation studies
demonstrate the theoretical results.
\end{abstract}

\begin{keyword}[class=AMS]
\kwd[Primary ]{65C05}
\kwd{60G99}
\kwd[; secondary ]{60F99}
\end{keyword}
\begin{keyword}
\kwd{Importance sampling}
\kwd{Monte Carlo methods}
\kwd{large deviations}
\kwd{equilibrium points}
\kwd{attractors}
\end{keyword}
\end{frontmatter}

\section{Introduction}
\label{S:Introduction}

This paper considers the use of importance sampling for estimating hitting or
exit probabilities for stochastic processes. The process model is a
$d$-dimensional diffusion $X^{\varepsilon}\doteq\{X^{\varepsilon}(s),s\in [0,\infty)\}$ satisfying the stochastic differential equation
(SDE)
\begin{equation}\label{Eq:Diffusion1}
dX^{\varepsilon}(s)=b \bigl( X^{\varepsilon}(s) \bigr) \,ds+\sqrt{\varepsilon }
\sigma \bigl( X^{\varepsilon}(s) \bigr) \,dB(s), \qquad X^{\varepsilon
}(0)=x,
\end{equation}
where $\varepsilon>0$ and $B(s)$ is a standard $d$-dimensional Wiener process.
Of particular interest is the case of gradient flows, $b(x)=-DV(x)$, and
constant diffusion coefficient, though many aspects of the analysis are more
generally applicable. Let $\mathcal{D}\subset\mathbb{R}^{d}$ be an open set,
and denote by $\tau^{\varepsilon}$ the exit time of $X^{\varepsilon}(s)$ from
$\mathcal{D}$. We are concerned with the estimation of quantities such as the
probability that $X^{\varepsilon}$ leaves $\mathcal{D}$ before some time
$T\in(0,\infty)$, or that it exits through a particular subset $O\subset
\mathcal{D}$ before $T$, and related expected values. The principal novel
feature of this work is that the initial point is in the neighborhood of an
equilibrium point of the noiseless dynamics.

The estimation of such probabilities has several mathematical and
computational difficulties. It is related to the estimation of transition
probabilities between different metastable states within a given time horizon.
As is well known, standard Monte Carlo sampling techniques lead to
exponentially large relative errors as the noise coefficient $\varepsilon$
tends to zero. When rest points are in the domain of interest, the situation is
even more complicated than usual. This work will focus on this particularly
difficult issue.

The performance of unbiased estimators for rare event problems is usually
measured by the size of the second moment of the estimator based on a single
simulation. For a well-designed scheme the ratio of this second moment to the
quantity of interest will not grow too rapidly as $\varepsilon\downarrow0$.
One measure is the exponential rate of decay of the second moment. If this
rate of decay is exactly twice the decay rate for the probability of interest,
then the scheme is called asymptotically efficient (or weakly efficient). The
notion of \textit{strong} efficiency requires that the ratio of the second
moment to the square of the probability be bounded above uniformly for all
small $\varepsilon>0$. While such performance is certainly desirable, it is
not common when dealing with models such as (\ref{Eq:Diffusion1}) that involve
state dependent dynamics and complicated geometries. As we describe below, in
some sense both these measures are inadequate for the situation considered here.

A theory based on subsolutions to an associated Hamilton--Jacobi--Bellman (HJB)
equation has been developed for the design and performance analysis of
importance sampling; see, for example, \cite{dupwan3,dupwan5,dupwan6,dupspiwan}.
In this approach the change of measure (which, for reasons made evident
later, we will call the control) used in the importance sampling is defined
in terms of the gradient of a subsolution, and the~performance, as measured by
the decay rate of the second moment, is given by the value of the subsolution
at the initial location $x$. This theory is the starting point of our analysis
of (\ref{Eq:Diffusion1}), though as mentioned previously, the inclusion of rest
points will motivate some further developments.

For any particular class of process models and events, an essential step in
the application of this approach is the construction of appropriate
subsolutions. In this paper we will exploit the fact that the
Freidlin--Wentsell quasipotential \cite{frewen} can be used to construct
various subsolutions for certain \textit{time independent} problems related to
(\ref{Eq:Diffusion1}). In addition, for particular but important classes of
process models (e.g., gradient systems with constant diffusion matrix), the
quasipotential and hence these subsolutions take explicit and simple forms. As
we will discuss in detail, these subsolutions also give subsolutions for
the \textit{time dependent} problems, and when $T$ is large the value of the
subsolution at the starting point (which now includes time $t=0$ as well as
the location) will be close to the maximal value.

It follows that if the final time $T$ is large enough, then existing theory
implies that the estimator based on this subsolution should have a nearly
optimal decay rate for its second moment. While this is a valid statement,
there is an important qualitative difference between problems which include a
rest point in the domain of interest and those which do not. The distinction
is not on the decay rate, which behaves as expected in both situations, but
rather depends on the pre-exponential terms not captured by the decay rate.
When the domain does not contain a rest point, one has simultaneously good
rates of decay and control over the pre-exponential terms. However, when a
rest point is present schemes based only on this time independent subsolution
keep the desired decay rate but lose the good control over the pre-exponential
terms. The qualitative difference is related to the fact that in the former
case a subsolution designed on the basis of the $\varepsilon=0$ problem can be
shown to give useful bounds for the problem with $\varepsilon>0$, but in the
latter case this is no longer true. This qualitative distinction will be made
precise when we construct nonasymptotic bounds on the second moment
for the two cases. When $\varepsilon>0$ is small but not too small, the loss
of performance due to the large pre-exponential term can be significant,
rendering the associated importance sampling scheme little better than
ordinary Monte Carlo. As $\varepsilon\downarrow0$ the exponential decay rate
dominates, and importance sampling once again gives much greater performance
than ordinary Monte Carlo. However, the improvement is less than in the case
where rest points are not included, and an approach which avoids this loss of
performance would certainly be welcome.

In this paper we overcome this difficulty by constructing time dependent subsolutions that approximate
the zero-variance change of measure. The approach that we follow is to combine an explicit
solution to an approximating time dependent problem in a neighborhood of the
rest point with the time independent subsolution obtained via the
quasipotential away from the rest point. As we will show, such an approach
will maintain the high decay rate while at the same time properly controlling
the pre-exponential term. In the neighborhood of the rest point, one can
approximate the dynamics of the diffusion process by a Gauss--Markov process,
that is, a process with a constant diffusion matrix and drift that is affine in
the state. For these dynamics and appropriate terminal conditions for the
localizing problem, the solution to the related PDE can be constructed in
terms of the famous linear/quadratic regulator problem from optimal control
theory. As a consequence an explicit and nearly optimal scheme for a surrogate
problem can be identified in the neighborhood of the rest point, which is then
merged with the explicit scheme based on the quasipotential in that part of
the domain where it is particularly effective.

In this paper we analyze the difficulties caused by the presence of rest
points in a general setting. We describe and theoretically justify a
resolution of these difficulties in the case of dimension one, and present
computational data for this case. The construction of the localizing problem
is more elaborate in dimension greater than one, and will be presented in a
companion paper along with the results of numerical experiments in higher dimensions.

The contents of this paper are as follows. In Section~\ref{S:LDPandIS} we
review the relevant large deviation theory and importance sampling. In Section~\ref{S:EffectivenessOfSubsolutions} we discuss the effectiveness of time
independent subsolutions. In particular, we show that if rest points are not
part of the domain of interest, then subsolutions lead to both good decay rates
and nonasymptotic bounds for the second moment of the corresponding unbiased
estimator. However, when rest points are included in the domain of interest,
the situation is more complicated, and even if the decay rate is good, the
prelimit bounds may not be as good as desired. In Section~\ref{S:LinearProblem}, we present a change of measure for the problem with a
rest point for a quadratic potential function with provably good
pre-asymptotic and asymptotic performance and which does not degrade as $T$
gets larger. In Section~\ref{S:NonLinearProblem} we extend the discussion to
the nonlinear problem with rest points. Simulation data, demonstrating the
discussions in Sections~\ref{S:LinearProblem} and \ref{S:NonLinearProblem} are
also presented in the corresponding sections. The \hyperref[app]{Appendix} has some auxiliary
lemmas that are used in the main body of the manuscript.

\section{Related large deviation and importance sampling results}
\label{S:LDPandIS}

In this section we recall well-known large deviation results for probabilities
of exit times (Section~\ref{SS:LDP}), review importance sampling in the
context of small noise diffusions (Section~\ref{SS:IS}) and also recall
the notion of subsolutions to certain related HJB equations (Section~\ref{SS:RoleOfSubsolutions}).

In most of this paper, the following assumptions will be used: the assumptions
are stronger than necessary, but simplify the discussion considerably. For
example, the nondegeneracy of the diffusion matrix and regularity of the
boundary of $\mathcal{D}$ easily imply that a limit exists for
(\ref{Eq:EstimationTarget2}). They can be weakened, but the existence of the
limit then requires conditions that are best addressed in a problem dependent fashion.

\begin{condition}
\label{A:MainAssumption}
\textup{(i)} The drift\vspace*{-6pt} $b$ is bounded and Lipschitz continuous.
\begin{longlist}[(iii)]
\item[(ii)] {The coefficient $\sigma$ is bounded, Lipschitz continuous and uniformly
nondegenerate.}

\item[(iii)] $\mathcal{D}$ is an open and bounded subset of $\mathbb{R}^{d}$, and at
all points on its boundary~$\mathcal{D}$ satisfies an interior and exterior
cone condition; that is, there is $\delta>0$ such that if $x\in\partial
\mathcal{D}$, then there exist unit vectors $v_{1},v_{2}\in\mathbb{R}^{d}$ such that
\[
\bigl\{ y \dvtx  \llVert y-x\rrVert <\delta\mbox{ and }\bigl\llvert \langle
y-x,v_{1} \rangle \bigr\rrvert <\delta\llVert y-x\rrVert \bigr\}
\subset\mathcal{D}%
\]
and
\[
\bigl\{ y\dvtx \llVert y-x\rrVert <\delta\mbox{ and }\bigl\llvert \langle
y-x,v_{2} \rangle \bigr\rrvert <\delta\llVert y-x\rrVert \bigr\} \cap
\mathcal{D}=\varnothing.
\]
We also assume that if $x\in\partial\mathcal{D}$ and if $\phi$ solves
$\dot{\phi}=b(\phi),\phi(0)=x$, then $\phi(t)\in\mathcal{D}$ for all
$t\in(0,\infty)$.
\end{longlist}
\end{condition}

\subsection{Large deviation results}
\label{SS:LDP}

Fix $T\in(0,\infty)$, and consider an initial point $(t,x)\in [
0,T)\times\mathcal{D}$. Consider a bounded and class $\mathcal{C}^{2}$
function $h\dvtx \mathbb{R}^{d}\rightarrow\mathbb{R}$. Let $\mathbb{E}_{t,x}$ denote
expected value given $X^{\varepsilon}(t)=x$, and define
\begin{equation}\label{Eq:EstimationTargetSmooth1}%
\theta^{\varepsilon}(t,x)\doteq\mathbb{E}_{t,x} \bigl[
e^{-({1}/{\varepsilon})h (  X^{\varepsilon}(\tau^{\varepsilon}) )
}1_{ \{  \tau^{\varepsilon}\leq T \}  } \bigr].
\end{equation}
Since $\theta^{\varepsilon}$ scales exponentially, it is also useful to define
\begin{equation}\label{Eq:EstimationTarget2}%
G^{\varepsilon}(t,x)\doteq-\varepsilon\log\theta^{\varepsilon}(t,x).
\end{equation}
Although for now we focus on the case where $h$ is bounded and continuous, we are also interested in cases where $h$ is discontinuous and takes the value
$\infty$. An example is when for some set $O\subset\partial\mathcal{D}$,
$h(x)=\infty$ if $x\notin O$ and $h(x)=0$ if $x\in O$. In this case
$\theta^{\varepsilon}(t,x)$ equals the probability of exiting $\mathcal{D}$
through $O$ before time $T$. For these cases and under mild regularity
conditions on $O$, statements analogous to Theorem~\ref{T:LDPTheorem} below hold.

Let $\mathcal{AC} ( [t,T]\dvtx \mathbb{R}^{d}) $ be the set of
absolutely continuous functions on $[t,T]$ with values in $\mathbb{R}^{d}$. We
denote the local rate function by
\[
L(x,v)\doteq\tfrac{1}{2} \bigl\langle v-b(x),a^{-1}(x) \bigl[
v-b(x) \bigr] \bigr\rangle, 
\]
where $a(x)=\sigma(x)\sigma^{T}(x)$, and the corresponding rate or action
functional for $\phi\in\mathcal{AC} (  [t,T]\dvtx \mathbb{R}^{d} )$
by
\[
I_{tT}(\phi)\doteq\int_{t}^{T}L\bigl(
\phi(s),\dot{\phi}(s)\bigr)\,ds. 
\]
For all other $\phi\in\mathcal{C} (  [t,T]\dvtx \mathbb{R}^{d} )$, set
$I_{tT}(\phi)=\infty$. The following large deviations result is well known; see, for example,
\cite{fleson1,frewen}.

\begin{theorem}
\label{T:LDPTheorem}
Assume Condition \ref{A:MainAssumption}. Then for each
$(t,x)\in [0,T)\times\mathcal{D}$
\[
\lim_{\varepsilon\downarrow0}G^{\varepsilon}(t,x)=G(t,x)\doteq\inf
_{\phi
\in\Lambda(t,x)} \bigl[ I_{tT}(\phi)+h\bigl(\phi(T)\bigr)
\bigr], 
\]
where
\[
\Lambda(t,x)= \bigl\{ \phi\in\mathcal{C}\bigl([t,T]\dvtx \mathbb{R}^{d}
\bigr)\dvtx \phi (t)=x,\phi(s)\in\mathcal{D}\mbox{ for }s\in [ t,T],\phi(T)\in
\partial\mathcal{D} \bigr\}.
\]
\end{theorem}

\subsection{Preliminaries on importance sampling}
\label{SS:IS}

We briefly review the use of importance sampling for estimating
$\theta^{\varepsilon}(t,x)$ for a given function $h$. Let $\Gamma^{\varepsilon}(t,x)$ be any unbiased estimator of $\theta^{\varepsilon}(t,x)$
that is defined on some probability space with probability measure
$\bar{\mathbb{P}}$. Thus $\Gamma^{\varepsilon}(t,x)$ is a random variable such
that
\[
\bar{\mathbb{E}}\Gamma^{\varepsilon}(t,x)=\theta^{\varepsilon}(t,x),
\]
where $\bar{\mathbb{E}}$ is the expectation operator associated with
$\bar{\mathbb{P}}$. In this paper we will consider only unbiased estimators.

In Monte Carlo simulation, one generates a number of independent copies of
$\Gamma^{\varepsilon}(t,x)$, and the estimate is the sample mean. The specific
number of samples required depends on the desired accuracy, which is measured
by the variance of the sample mean. However, since the samples are independent,
it suffices to consider the variance of a single sample. Because of
unbiasedness, minimizing the variance is equivalent to minimizing the second
moment. By Jensen's inequality,
\[
\bar{\mathbb{E}}\bigl(\Gamma^{\varepsilon}(t,x)\bigr)^{2}\geq\bigl(
\bar{\mathbb{E}}%
\Gamma^{\varepsilon}(t,x)\bigr)^{2}=
\theta^{\varepsilon}(t,x)^{2}.
\]
It then follows from Theorem \ref{T:LDPTheorem} that
\[
\limsup_{\varepsilon\rightarrow0}-\varepsilon\log\bar{\mathbb{E}}%
\bigl(
\Gamma^{\varepsilon}(t,x)\bigr)^{2}\leq2G(t,x),
\]
and thus $2G(t,x)$ is the best possible rate of decay of the second moment.
If
\[
\liminf_{\varepsilon\rightarrow0}-\varepsilon\log\bar{\mathbb{E}}%
\bigl(
\Gamma^{\varepsilon}(t,x)\bigr)^{2}\geq2G(t,x),
\]
then $\Gamma^{\varepsilon}(t,x)$ achieves this best decay rate and is said to
be \textit{asymptotically optimal}. While asymptotic optimality or near-asymptotic optimality is desirable,
as noted in the \hyperref[S:Introduction]{Introduction} one may also
desire good behavior of the pre-exponential term. To keep the terminology
clear we will avoid the conventional usage of terms such as asymptotic
optimality, and refer instead to properties of the ``decay
rate'' and the ``pre-exponential
term.''

The unbiased estimators $\Gamma^{\varepsilon}(t,x)$ that we consider are all
based on measure transformation. Consider $u^{\varepsilon}(s)$, a sufficiently
integrable and adapted function, such that
\[
\frac{d\bar{\mathbb{P}}^{\varepsilon}}{d\mathbb{P}}=\exp \biggl\{ -\frac
{1}{2\varepsilon}\int_{t}^{T}
\bigl\llVert u^{\varepsilon}(s)\bigr\rrVert^{2}\,ds+\frac{1}{\sqrt{\varepsilon}}\int
_{t}^{T} \bigl\langle u^{\varepsilon
}(s),dB(s)
\bigr\rangle \biggr\}
\]
defines a family of probability measures $\bar{\mathbb{P}}^{\varepsilon}$.
Then by Girsanov's theorem, for each $\varepsilon>0$,
\[
\bar{B}(s)=B(s)-\frac{1}{\sqrt{\varepsilon}}\int_{t}^{s}u^{\varepsilon}%
(
\rho)\,d\rho,\qquad t\leq s\leq T
\]
is a Brownian motion on $[t,T]$ under the probability measure $\bar
{\mathbb{P}}^{\varepsilon}$, and $X^{\varepsilon}$ satisfies $X^{\varepsilon
}(t)=x$ and
\[
dX^{\varepsilon}(s)=b \bigl( X^{\varepsilon}(s) \bigr) \,ds+\sigma \bigl(
X^{\varepsilon}(s) \bigr) \bigl[ \sqrt{\varepsilon}\,d\bar{B}%
(s)+u^{\varepsilon}(s)\,ds
\bigr].
\]
For our purposes, $u^{\varepsilon}(s)$ is either given as a process that is
progressively measurable with respect to a suitable filtration that measures
the Wiener process (sometimes called an \textit{open loop} control), or else
it is of \textit{feedback} form, in which case there is a suitably measurable
function $\bar{u}^{\varepsilon}\dvtx [0,T]\times\mathbb{R}^{d}\rightarrow
\mathbb{R}^{d}$ such that $u^{\varepsilon}(s)=\bar{u}^{\varepsilon
}(s,X^{\varepsilon}(s))$. Of course when implementing importance sampling we
consider only the latter form. Letting
\[
\Gamma^{\varepsilon}(t,x)=\exp \biggl\{ -\frac{1}{\varepsilon}h \bigl(
X^{\varepsilon}\bigl(\tau^{\varepsilon}\bigr) \bigr) \biggr\} 1_{ \{
\tau^{\varepsilon}\leq T \}  }\,
\frac{d\mathbb{P}}{d\bar{\mathbb{P}%
}^{\varepsilon}}\bigl(X^{\varepsilon}\bigr),
\]
it follows easily that under $\bar{\mathbb{P}}^{\varepsilon}$, $\Gamma
^{\varepsilon}(t,x)$ is an unbiased estimator for $\theta^{\varepsilon}(t,x)$.
The performance of this estimator is characterized by its second moment,
\begin{equation}
\label{eqn:defofQ}
Q^{\varepsilon}\bigl(t,x;\bar{u}^{\varepsilon}\bigr)\doteq\bar{
\mathbb{E}}^{\varepsilon
} \biggl[ \exp \biggl\{ -\frac{2}{\varepsilon}h \bigl(
X^{\varepsilon}%
\bigl(\tau^{\varepsilon}\bigr) \bigr) \biggr\}
1_{ \{  \tau^{\varepsilon}\leq
T \}  } \biggl( \frac{d\mathbb{P}}{d\bar{\mathbb{P}}^{\varepsilon}%
}\bigl(X^{\varepsilon}\bigr) \biggr)
^{2} \biggr].
\end{equation}
The goal of this paper is to investigate the effect of rest points on
$Q^{\varepsilon}(t,x;\bar{u}^{\varepsilon})$ and how one can choose controls
that guarantee both good decay rates and pre-exponential bounds for
$Q^{\varepsilon}(t,x;\bar{u}^{\varepsilon})$.

We conclude this section with a review of subsolutions to related HJB
equations. Such subsolutions are essential for constructing and analyzing good
important sampling schemes.

\subsection{Subsolutions to a related PDE}
\label{SS:RoleOfSubsolutions}

Let
\[
\mathbb{H}(x,p)= \bigl\langle b(x),p \bigr\rangle -\tfrac{1}{2}\bigl\llVert
\sigma^{T}(x)p\bigr\rrVert ^{2}.
\]
The construction of good importance sampling schemes for a quantity such as
(\ref{Eq:EstimationTargetSmooth1}) is closely related to the HJB equation
\begin{eqnarray}\label{Eq:HJBequation1}
U_{t}(t,x)+\mathbb{H}\bigl(x,DU(t,x)\bigr)&=& 0\qquad\mbox{for }(t,x)
\in [0,T)\times \mathcal{D},
\\
U(t,x) &=& h(x)\qquad\mbox{for }t\leq T,x\in\partial\mathcal{D},
\nonumber
\\[-8pt]
\label{Eq:HJBterm_cond}
\\[-8pt]
\nonumber
U(T,x) &=& \infty \qquad \mbox{for }x\in\mathcal{D},
\end{eqnarray}
and more precisely to its subsolutions. It can be shown that $G$ defined in
Theorem~\ref{T:LDPTheorem} is the unique continuous viscosity solution of
(\ref{Eq:HJBequation1}) and (\ref{Eq:HJBterm_cond}); see \cite{fleson1}.

\begin{definition}
\label{Def:ClassicalSubsolution}
A function $\bar{U}(t,x)\dvtx [0,T]\times
\mathbb{R}^{d}\rightarrow\mathbb{R}$ is a classical subsolution to the HJB
equation (\ref{Eq:HJBequation1}) and (\ref{Eq:HJBterm_cond}) if:
\begin{longlist}[(iii)]
\item[(i)] $\bar{U}$ is continuously differentiable;

\item[(ii)] $\bar{U}_{t}(t,x)+\mathbb{H}(x,D\bar{U}(t,x))\geq0$ for\vspace*{1pt} every
$(t,x)\in [0,T)\times\mathcal{D}$;

\item[(iii)] $\bar{U}(t,x)\leq h(x)$ for $t\leq T,x\in\partial\mathcal{D}$ and
$\bar{U}(T,x)\leq\infty$ for $x\in\mathcal{D}$.
\end{longlist}
\end{definition}

The connection between subsolutions and the performance of importance sampling
schemes has been established in several papers, such as
\cite{dupwan5,dupspiwan}. These papers either consider classical subsolutions
or, more generally, piecewise classical subsolutions. To simplify the
discussion, we consider here just classical subsolutions. In the present
setting, we have the following theorem regarding asymptotic optimality
(Theorem 4.1 in \cite{dupspiwan}).

\begin{theorem}
\label{T:UniformlyLogEfficientRegime1}
Let $\{X^{\varepsilon},\varepsilon>0\}$
be the unique strong solution to (\ref{Eq:Diffusion1}). Consider a bounded and
continuous function $h\dvtx \mathbb{R}^{d}\rightarrow\mathbb{R}$, and assume Condition
\ref{A:MainAssumption}.
Let $\bar{U}(t,x)$ be a subsolution according to
Definition \ref{Def:ClassicalSubsolution}, and define the control
$u^{\varepsilon}(s)=-\sigma^{T}(X^{\varepsilon}(s))D\bar{U}(s,X^{\varepsilon
}(s))$. Then
\[
\liminf_{\varepsilon\rightarrow0}-\varepsilon\log Q^{\varepsilon
}
\bigl(t,x;u^{\varepsilon}\bigr)\geq G(t,x)+\bar{U}(t,x). \label{Eq:GoalRegime1Subsolution}%
\]
\end{theorem}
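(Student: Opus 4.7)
The plan is to derive the pointwise inequality
\[
Q^{\varepsilon}(t,x;u^{\varepsilon}) \;\leq\; e^{K}\, e^{-\bar{U}(t,x)/\varepsilon}\, \theta^{\varepsilon}(t,x)
\]
for a finite constant $K$ independent of $\varepsilon$, then to take $-\varepsilon\log$ of both sides and invoke Theorem~\ref{T:LDPTheorem} to identify $-\varepsilon\log\theta^{\varepsilon}(t,x)\to G(t,x)$.

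First I would transfer $Q^{\varepsilon}$ back to the original measure $\mathbb{P}$. Using the Radon-Nikodym formula from Subsection~\ref{SS:IS}, a direct check shows $(\Gamma^{\varepsilon})^{2}\, d\bar{\mathbb{P}}^{\varepsilon}/d\mathbb{P}=e^{-2h/\varepsilon}1_{\{\tau^{\varepsilon}\leq T\}}(d\mathbb{P}/d\bar{\mathbb{P}}^{\varepsilon})$, yielding
\[
Q^{\varepsilon}(t,x;u^{\varepsilon})=\mathbb{E}_{t,x}\!\left[e^{-2h(X^{\varepsilon}(\tau^{\varepsilon}))/\varepsilon}1_{\{\tau^{\varepsilon}\leq T\}}\exp\!\left\{\frac{1}{2\varepsilon}\!\int_{t}^{\tau^{\varepsilon}\wedge T}\!\|u^{\varepsilon}\|^{2}ds-\frac{1}{\sqrt{\varepsilon}}\!\int_{t}^{\tau^{\varepsilon}\wedge T}\!\langle u^{\varepsilon},dB\rangle\right\}\right],
\]
where $X^{\varepsilon}$ solves (\ref{Eq:Diffusion1}) under $\mathbb{P}$ and $u^{\varepsilon}(s)=-\sigma^{T}(X^{\varepsilon}(s))D\bar{U}(s,X^{\varepsilon}(s))$. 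The rest of the argument consists in dominating the stochastic cost in the exponent.

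Next I would apply Itô's formula to $\bar{U}(s,X^{\varepsilon}(s))$ on $[t,\tau^{\varepsilon}\wedge T]$ and invoke the subsolution inequality $\bar{U}_{t}+\langle b,D\bar{U}\rangle\geq \tfrac{1}{2}\|\sigma^{T}D\bar{U}\|^{2}=\tfrac{1}{2}\|u^{\varepsilon}\|^{2}$. Rearranging, the random exponent above is bounded by $\varepsilon^{-1}[\bar{U}(\tau^{\varepsilon}\wedge T,X^{\varepsilon}(\tau^{\varepsilon}\wedge T))-\bar{U}(t,x)]+K$, where $K$ absorbs the bounded correction $-\tfrac{1}{2}\int\mathrm{tr}(aD^{2}\bar{U})\,ds$. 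On $\{\tau^{\varepsilon}\leq T\}$ the subsolution boundary condition gives $\bar{U}(\tau^{\varepsilon},X^{\varepsilon}(\tau^{\varepsilon}))\leq h(X^{\varepsilon}(\tau^{\varepsilon}))$, so when combined with the $-2h/\varepsilon$ prefactor a single $h/\varepsilon$ cancels and the total exponent is at most $-[\bar{U}(t,x)+h(X^{\varepsilon}(\tau^{\varepsilon}))]/\varepsilon+K$. Pulling $e^{-\bar{U}(t,x)/\varepsilon}$ outside identifies the remaining expectation as $\theta^{\varepsilon}(t,x)$, yielding the pointwise bound.

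The principal obstacle is regularity: Definition~\ref{Def:ClassicalSubsolution} requires only $\bar{U}\in C^{1}$, whereas Itô's formula and boundedness of $\mathrm{tr}(aD^{2}\bar{U})$ are most conveniently phrased for $C^{1,2}$ functions. The fix is either to assume the extra smoothness (which will in fact hold for the subsolutions built later from the quasipotential and the linear-quadratic regulator) or to mollify $\bar{U}$ into $\bar{U}^{\delta}$, run the estimate, and send $\delta\downarrow 0$. The attendant localization of the stochastic integral is routine because $\sigma$ and $D\bar{U}$ are bounded on $\overline{\mathcal{D}}$.
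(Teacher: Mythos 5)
Your argument is correct in substance, but it is a genuinely different route from the one the paper relies on: the paper does not prove Theorem \ref{T:UniformlyLogEfficientRegime1} directly, it imports it as Theorem 4.1 of \cite{dupspiwan}, where the bound is obtained from the variational (game) representation of $-\varepsilon\log Q^{\varepsilon}$ of Bou\'e--Dupuis type, i.e.\ the machinery recalled in (\ref{Eq:game_rep}) and Lemma \ref{L:GeneralBound}, combined with a verification/weak-convergence argument for the controlled process. You instead change measure back to $\mathbb{P}$, apply It\^{o}'s formula to $\bar{U}$ along the original diffusion, use the subsolution inequality $\bar{U}_{t}+\langle b,D\bar{U}\rangle\geq\frac{1}{2}\Vert\sigma^{T}D\bar{U}\Vert^{2}$ to cancel the quadratic cost in the likelihood ratio, and use the boundary inequality $\bar{U}\leq h$ on $\partial\mathcal{D}$ to absorb one factor of $h/\varepsilon$; this yields the non-asymptotic bound $Q^{\varepsilon}(t,x;u^{\varepsilon})\leq e^{K}e^{-\bar{U}(t,x)/\varepsilon}\theta^{\varepsilon}(t,x)$ with $K$ proportional to $T\sup|\mathrm{tr}(aD^{2}\bar{U})|$, after which Theorem \ref{T:LDPTheorem} gives the decay rate. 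This is close in spirit to what the paper does later (Section \ref{S:EffectivenessOfSubsolutions} and Lemma \ref{L:GeneralBound}), and it even exhibits explicitly the $T$-dependent pre-exponential loss that the paper emphasizes; what the game-representation route buys is that it accommodates subsolutions that are merely $C^{1}$ (or piecewise classical) and terminal costs that are unbounded, without the smoothing step. One more small point: your stopped integrals implicitly assume the control is switched off at $\tau^{\varepsilon}\wedge T$; that is the scheme actually implemented and is consistent with (\ref{Eq:game_rep}), but it should be said, since the density in Subsection \ref{SS:IS} is written over all of $[t,T]$.

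Where you should be more careful is the mollification fix for the $C^{1}$ versus $C^{1,2}$ issue. Convolving $\bar{U}$ with a smooth kernel does not reproduce the subsolution inequality exactly: by concavity of $p\mapsto\mathbb{H}(x,p)$ and Jensen one gets $\bar{U}^{\delta}_{t}(t,x)+\mathbb{H}(x,D\bar{U}^{\delta}(t,x))\geq-C\delta$, the error coming from the $x$-dependence of $\mathbb{H}$ (controlled by the Lipschitz constants of $b,\sigma$ and the local bound on $D\bar{U}$), and similarly the boundary inequality only holds up to a modulus-of-continuity error; these contribute terms of order $\delta/\varepsilon$ to the exponent. Moreover $\sup|\mathrm{tr}(aD^{2}\bar{U}^{\delta})|$ may blow up as $\delta\downarrow0$, so you cannot send $\delta\downarrow0$ at fixed $\varepsilon$; the correct order is to prove, for each fixed $\delta$, the bound $\liminf_{\varepsilon\to0}\left(-\varepsilon\log Q^{\varepsilon}(t,x;u^{\varepsilon})\right)\geq G(t,x)+\bar{U}^{\delta}(t,x)-C\delta$ (the $\varepsilon K_{\delta}$ term vanishes in the limit), noting also that the control used in the scheme can be kept as the one built from $\bar{U}$ itself, and only then let $\delta\downarrow0$ using $\bar{U}^{\delta}\to\bar{U}$ locally uniformly. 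With these points made explicit, your proof is complete and yields the statement of the theorem; as written, the sentence ``mollify, run the estimate, and send $\delta\downarrow0$'' glosses over precisely the steps where the argument could go wrong.
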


Since $\bar{U}$ is a subsolution, it is automatic that $G(s,y)\geq\bar{U}(s,y)$
for all $(s,y)\in [0,T]\times\mathcal{D}$. If $G(t,x)=\bar{U}(t,x)$, then
the scheme has the largest possible decay rate.

\section{Qualitative properties of schemes based on subsolutions}
\label{S:EffectivenessOfSubsolutions}
In this section we justify some of the
claims made in the \hyperref[S:Introduction]{Introduction} regarding the differences in performance
between importance sampling schemes when rest points are included in the
domain of interest and when they are not. We consider just the problem of
estimating the probability of escape from a set before time $T$, and even then
consider a particular setup. However, the example will illustrate the
difference between the two cases, and also suggest how one might improve the
performance when rest points are involved.

\begin{rem}
Much of the prior application of subsolutions to importance sampling
\cite{dupledwan3,dupledwan4,dupsezwan,dupwan6} has involved the estimation of
escape probabilities for classes of stochastic networks, in which case the
origin is often the unique stable point for the law of large numbers dynamics
[the analogue of (\ref{Eq:Diffusion1}) with $\varepsilon=0$]. The event
most often studied in this context is that of escape from a set (i.e., buffer
overflow) before reaching the origin, after starting near, but not at the
origin. The analogous event for the diffusion model (\ref{Eq:Diffusion1}) is
one of the problems that are the focus of the present work. However, the
difficulties that will be described momentarily for the diffusion model do not
arise when dealing with the analogous estimation problem for stochastic
networks, and indeed in that setting the proximity of the rest point has
little impact on either the rate of decay or the pre-exponential term. This is
related to the fact that the law of large numbers trajectories for stochastic
networks reach the origin in finite time, as opposed to the infinite time it
takes for the solution to (\ref{Eq:Diffusion1}) with $\varepsilon=0$ to
reach a stable equilibrium point when not starting at such a point. In turn,
this property is responsible for the fact that minimizing trajectories in the
definition of the quasipotential are achieved on bounded time intervals for
stochastic network models, but take infinite time for processes such as
(\ref{Eq:Diffusion1}).
\end{rem}

For the remainder of this section we concentrate on the special case of
$b(x)=-DV(x)$ and $\sigma(x)=I$, and on a particular estimation problem. We
first argue that if the domain of interest does not include a rest point, then
given a time-independent subsolution and associated control, not only is a good
decay rate obtained, but good bounds on the pre-exponential terms hold as
well. We then show why this is not possible when a rest point is included.

Assume that $x=0$ is the global minimum for $V(x)$, so that $DV(0)=0$, and
that $DV(x)\neq0$ for all $x\neq0$. Without loss we assume that $V(0)=0$. Let
$0<\ell<L$, and define $\mathcal{D}\doteq\{x\in\mathbb{R}^{d}\dvtx \ell<V(x)<L\}$ and
$A_{c}\doteq\{x\in\mathbb{R}^{d}\dvtx V(x)=c\}$. Then the problem is to estimate
\[
\theta^{\varepsilon}(t,y)\doteq\mathbb{P}_{t,y} \bigl\{
X^{\varepsilon}\mbox{ hits }A_{L}\mbox{ before hitting
}A_{\ell}\mbox{ and before time }T \bigr\},
\]
where the initial point $y$ is such that $\ell<V(y)<L$. This corresponds to
(\ref{Eq:EstimationTargetSmooth1}), but here $h$ is not bounded and smooth,
and instead $h(x)=0$ if $x\in A_{L}$ and $h(x)=\infty$ if $x\in A_{\ell}$. For
this problem one can also identify the rate of decay $G(t,y)$ via
(\ref{Eq:EstimationTarget2}). A one-dimensional example is illustrated in Figure~\ref{fig:no_rest}.


\begin{figure}[t]

\includegraphics{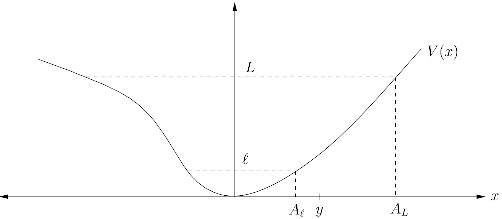}

\caption{Escape problem with no rest point.}
\label{fig:no_rest}
\end{figure}

The quasipotential with respect to the equilibrium point $0$ is defined by
\[
S(0,x)\doteq\inf\bigl\{I_{0T}(\phi)\dvtx \phi\in\mathcal{C}\bigl([0,T]\dvtx
\mathbb{R}^{d}\bigr),\phi(0)=0,\phi(T)=x,T\in(0,\infty)\bigr\}. 
\]
It follows from the variational characterization of $S$ that $x\rightarrow
S(0,x)$ is always a weak sense solution to $\mathbb{H}(x,-DS(0,x) )  =0$, and therefore by adding an appropriate constant $C$ to
satisfy any needed boundary and terminal conditions, $-S(0,x)+C$ will always
define a weak sense subsolution. In the present case $S(0,x)$ takes the
explicit form (Theorem 4.3.1 in \cite{frewen})
\[
S(0,x)=2 \bigl( V(x)-V(0) \bigr) =2V(x),
\]
and it is easy to check that $U(x)=-2(V(x)-L)$ is a subsolution according to
Definition \ref{Def:ClassicalSubsolution}. Indeed, $U(x)=0$ for $x\in A_{L}$,
while the boundary condition $U(x)\leq\infty$ for $x\in A_{\ell}$ and terminal
condition $U(x)\leq\infty$ for $x\in\mathcal{D}$ hold vacuously; see Figure~\ref{fig:sub_no_rest}. The control (i.e., change of measure) suggested by this
subsolution for the importance sampling scheme is $\bar{u}(x)=2DV(x)$.%
\begin{figure}

\includegraphics{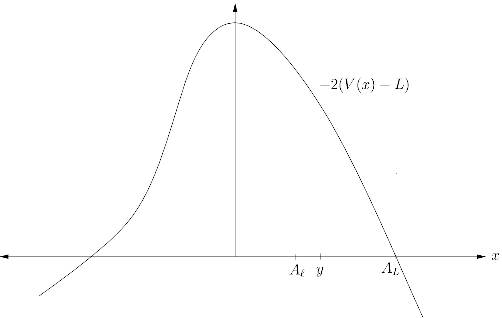}

\caption{Subsolution for both cases.}
\label{fig:sub_no_rest}
\end{figure}

Recall that $Q^{\varepsilon}(t,x;\bar{u})$ is defined in (\ref{eqn:defofQ}) as the second moment for the scheme based on $\bar{u}$.
In equation (\ref{Eq:game_rep}) below we give a representation for  $Q^{\varepsilon}(t,x;\bar{u})$.
The representation follows
essentially from the arguments of Section~2.3 of \cite{dupspiwan}, which is in turn
based on the representation for exponential integrals with respect to Brownian
motion given in \cite{boudup}. It is given in terms of the
value of a stochastic differential game, where the player corresponding to the
importance sampling scheme has already selected their control (i.e., $\bar{u}$).
The characterization of performance for importance sampling in terms of
games was first introduced in \cite{dupwan3}. The only difference between the
use here and in \cite{dupspiwan} is that there the function $h$ is bounded,
which is not true here since we consider an escape probability. However, the
bounds stated below can be obtained by first replacing $\infty1_{\{
\tau>T \} }$ by $M1_{ \{  \tau>T \}  }$ and then letting
$M\uparrow\infty$.

Let $\mathfrak{F}_{t}$ be a filtration satisfying the usual conditions of
completion and right continuity and which measures the Wiener process. Let
$\mathcal{A}$ denote the set of all $\mathfrak{F}_{t}$-progressively
measurable $d$-dimensional processes $v= \{  v(s),0\leq s\leq T \}$
that satisfy
\[
\mathbb{E}\int_{0}^{T}\bigl\llVert v(t)\bigr
\rrVert ^{2}\,dt<\infty.
\]
Let $\varepsilon>0$ be fixed, and let $\hat{X}^{\varepsilon}$ be the unique
strong solution to
\[
d\hat{X}^{\varepsilon}(s)=-DV\bigl(\hat{X}^{\varepsilon}(s)\bigr)\,ds+ \bigl[
\sqrt{\varepsilon}\,dB(s)-\bigl[\bar{u}\bigl(\hat{X}^{\varepsilon}(s)\bigr)-v(s)
\bigr]\,ds \bigr] \label{Eq:control_diffusion}%
\]
with initial condition $\hat{X}^{\varepsilon}(0)=y$. Let $\hat{\tau
}^{\varepsilon}$ denote the first time $\hat{X}^{\varepsilon}$ exits
$\mathcal{D}$. Then
\begin{eqnarray}
&& \hspace*{4pt}-\varepsilon\log Q^{\varepsilon}(0,y;\bar{u})
\nonumber
\\[-8pt]
\label{Eq:game_rep}
\\[-8pt]
\nonumber
&&\hspace*{4pt} \qquad=\inf_{v\in\mathcal{A}}%
\mathbb{E} \biggl[ \frac{1}{2}\int_{0}^{\hat{\tau}^{\varepsilon}}
\bigl\llVert v(s)\bigr\rrVert ^{2}\,ds-\int_{0}^{\hat{\tau}^{\varepsilon}}
\bigl\llVert \bar {u}\bigl(\hat{X}^{\varepsilon}(s)\bigr)\bigr\rrVert
^{2}\,ds+\infty1_{ \{  \hat{\tau
}^{\varepsilon}>T \}  } \biggr].
\end{eqnarray}

It is important to note that (\ref{Eq:game_rep}) provides a nonasymptotic
representation for the performance measure $Q^{\varepsilon}(0,y;\bar{u})$.
However, to obtain a more concrete statement regarding the performance of the
importance sampling scheme, we will want bounds on $Q^{\varepsilon}%
(0,y;\bar{u})$ that are more explicit than the right-hand side of
(\ref{Eq:game_rep}). We do this by observing that when viewed as a function of
an arbitrary starting point $(t,x)$, $-\varepsilon\log Q^{\varepsilon
}(t,x;\bar{u})$ also satisfies a nonlinear PDE of the same general form as
(\ref{Eq:HJBequation1}) (plus terminal and boundary conditions), and thus
lower bounds can be obtained by constructing subsolutions for this PDE.
However, a key difference is that in contrast to (\ref{Eq:HJBequation1}), the
PDE for (\ref{Eq:game_rep}) involves a second derivative term. One cannot
avoid this issue, in that second derivative information and $\varepsilon$
dependence are needed if one is to obtain nonasymptotic bounds, even when the
change of measure is based on a first order equation.

We next give the statement of the lower bound [see (\ref{Eq:LBQ})] as it applies to the special
case of this section. A more general statement and the proof will be given in
Lemma \ref{L:GeneralBound}. The proof is an easy consequence of
It\^{o}'s formula and the min/max representation
\[
\mathbb{H}(x,p)=\inf_{v}\sup_{u}
\biggl[ \bigl\langle p,-DV(x)-u+v \bigr\rangle -\dfrac{1}{2}\llVert u\rrVert
^{2}+\dfrac{1}{4}\llVert v\rrVert ^{2} \biggr].
\]
Define
\[
\mathcal{G}^{\varepsilon}[W](t,x)=W_{t}(t,x)+\mathbb{H}
\bigl(x,DW(t,x)\bigr)+\frac{\varepsilon}{2}D^{2}W(t,x),
\]
and let $\bar{W}$ be a subsolution to $\mathcal{G}^{\varepsilon}[W]=0$
together with the boundary conditions $W(t,x)=0$ for $t<T,x\in A_{L}$,
$W(t,x)=\infty$ for $t<T,x\in A_{\ell}$, and terminal condition $W(T,x)=\infty
$ for $x\in\mathcal{D}$. Suppose\vspace*{1pt} $\bar{u}$ is the control based on a given
smooth function $\bar{U}$, that is, $\bar{u}(t,x)=-D\bar{U}(t,x)$. Then
\begin{eqnarray}
&& -\varepsilon\log Q^{\varepsilon}(0,y;\bar{u})\nonumber
\\
&& \qquad=\inf_{v\in\mathcal{A}\dvtx \hat{\tau}^{\varepsilon}\leq T \ \mathrm{w.p.1}%
}\mathbb{E} \biggl[ \frac{1}{2}\int
_{0}^{\hat{\tau}^{\varepsilon}}\bigl\llVert v(s)\bigr\rrVert
^{2}\,ds-\int_{0}^{\hat{\tau}^{\varepsilon}}\bigl\llVert \bar
{u}\bigl(s,\hat{X}^{\varepsilon}\bigr)\bigr\rrVert ^{2}\,ds \biggr]
\nonumber
\\[-8pt]
\label{Eq:LBQ}
\\[-8pt]
\nonumber
&&\qquad \geq 2\bar{W}(0,y)+\inf_{v\in\mathcal{A}\dvtx \hat{\tau}^{\varepsilon}\leq T \ \mathrm{w.p.1}}\mathbb{E} \biggl[ \int
_{0}^{\hat{\tau}^{\varepsilon}}2\mathcal{G}%
^{\varepsilon}[
\bar{W}]\bigl(s,\hat{X}^{\varepsilon}\bigr)\,ds\\
&& \hspace*{121pt}\qquad\qquad{}-\int_{0}^{\hat{\tau
}^{\varepsilon}}
\bigl\llVert D\bar{W}\bigl(s,\hat{X}^{\varepsilon}\bigr)-D\bar {U}\bigl(s,
\hat{X}^{\varepsilon}\bigr)\bigr\rrVert ^{2}\,ds \biggr].
\nonumber
\end{eqnarray}

Next we show how (\ref{Eq:LBQ}) can be used to obtain bounds that are uniform
in $T$. For $\eta\in(0,1)$ define
\[
U^{\eta}(x)\doteq(1-\eta)U(x),
\]
where $U(x)=-2(V(x)-L)$ is the subsolution based on the quasipotential for $V$ as above, and
assume that $U$ is twice continuously differentiable. Then as with $U$, the
appropriate boundary and terminal inequalities hold for $U^{\eta}$. We next
evaluate the right-hand side of (\ref{Eq:LBQ}) when the subsolution is $\bar{W}=U^{\eta}$ and the control is based on $\bar
{U}=U$. A straightforward calculation gives
\[
\mathbb{H}\bigl(x,DU^{\eta}(x)\bigr)=2\bigl(\eta-\eta^{2}
\bigr)\bigl\llVert DV(x)\bigr\rrVert ^{2},
\]
and therefore
\[
\mathcal{G}^{\varepsilon}\bigl[U^{\eta}\bigr](x)-\tfrac{1}{2}\bigl
\llVert DU^{\eta
}(x)-DU(x)\bigr\rrVert ^{2}=2\bigl(\eta-2
\eta^{2}\bigr)\bigl\llVert DV(x)\bigr\rrVert ^{2}-
\varepsilon(1-\eta)D^{2}V(x).
\]
For $\varepsilon>0$ but smaller than a constant that depends on $\inf_{x\in\mathcal{D}}\llVert  DV(x)\rrVert  ^{2}$ and $\sup_{x\in
\mathcal{D}}\llVert  D^{2}V(x)\rrVert  ^{2}$, there is $\eta
=\eta(\varepsilon)$ with $\eta(\varepsilon)\rightarrow0$ as $\varepsilon
\rightarrow0$ such that the last display is nonnegative. We then obtain from
(\ref{Eq:LBQ}) the nonasymptotic upper bound
\[
Q^{\varepsilon}(0,y;\bar{u})\leq e^{-({2}/{\varepsilon})U^{\eta}%
(y)}=e^{-({2}/{\varepsilon})(1-\eta)U(y)}.
\label{Eq:non-asy_bound2}%
\]

Note that this bound is independent of $T$, and also that this argument is not
possible when $0\in\mathcal{D}$. Indeed, since $\mathbb{H}(0,p)=-\llVert
p\rrVert  ^{2}/2\leq0$ for all $p$, $\mathcal{G}^{\varepsilon}[U^{\eta
}](x)\geq0$ is not possible for any choice of $\eta<1$ when $0\in\mathcal{D}$.

The quality of the bound obtained by this method depends on the degree to
which the subsolution obtained for the PDE $\mathcal{G}^{\varepsilon}[W]=0$
(plus boundary and terminal conditions) accurately approximates the solution
to this equation. In this example, we have used a crude method to produce
such a subsolution, which is to simply reduce a given subsolution to the
$\varepsilon=0$ equation by a constant factor of $(1-\eta)$. An examination of
the calculations suggest that the bound is not at all tight, which turns out
to be true. In fact, in this situation we can construct a better subsolution
and hence a tighter bound. For example, when $V(x)=x^2/2$, then so long as the origin is not included
$-x^{2}+L+2\varepsilon\log(x/\sqrt{L})$ can be used to obtain tighter bounds,
though this function is not convenient for the time dependent problem.

Note that the two functions $U$ and $U_{\eta}$ play very different roles here.
One is used to design an importance sampling scheme (here $U$), and one used
for its analysis (here $U_{\eta}$). Indeed, $U_{\eta}$ with $\eta>0$ is used
only for the analysis of the scheme that corresponds to $U$, and in particular
to derive a bound that is independent of~$T$.  However, the design of the
scheme and thus the simulation algorithm use the control $\bar{u}(t,x)=-D\bar{U}(t,x)$.

Next we consider the behavior of $Q^{\varepsilon}(0,y;\bar{u})$ when
$0\in\mathcal{D}$. In this case, we claim that $Q^{\varepsilon}(0,y;\bar{u})$
grows without bound in $T$ for all $\varepsilon>0$, and therefore the
performance of the control based on the quasipotential degrades as $T$ becomes
large. To show this is true, we use the game representation (\ref{Eq:game_rep}) to establish a
lower bound on $Q^{\varepsilon}(0,y;\bar{u})$. We again examine a particular
situation, which is to estimate the probability of escape from $\mathcal{D}%
=\{x\in\mathbb{R}^{d}\dvtx V(x)<L\}$ before time $T$, after starting at $y$ at time
$0$; see Figure~\ref{fig:with_rest}. The subsolution is\vspace*{1pt} still that of Figure~\ref{fig:sub_no_rest}.

\begin{figure}

\includegraphics{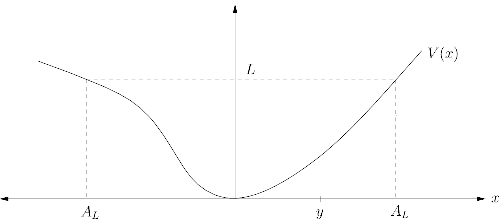}

\caption{Escape problem with a rest point.}
\label{fig:with_rest}
\end{figure}

With the understanding that $\hat{\tau}^{\varepsilon}$ now represents the time
of escape of $\hat{X}^{\varepsilon}$ from $\{x\in\mathbb{R}^{d}\dvtx V(x)<L\}$,
representation (\ref{Eq:game_rep}) is still valid. Suppose that $T$ is large,
and note that, while $\bar{u}(x)=-DU(x)=2DV(x)$ destabilizes the origin when
used to construct the measure used for importance sampling, in the
representation (\ref{Eq:game_rep}), it actually \emph{increases} the stability
of the origin, in the sense that $-DV(x)-\bar{u}(x)=-3DV(x)$. As a
consequence, it is easy to construct a control $v$ which shows poor
performance as $T\rightarrow\infty$. The construction is suggested in Figure~\ref{fig:bad_large_T}. With $T$ large we divide $[0,T]$ into an initial part
$[0,T-K)$ and a final part $[T-K$,$T]$, with $K$ fixed. During the first part
we apply $v(t)=0$. Because the resulting dynamics of $\hat{X}^{\varepsilon}$
are stable about the origin, with very high probability the process settles
around $0$ for the entire interval $[0,T-K)$. In the game representation there
is then a running cost of $\Vert\bar{u}(\hat{X}^{\varepsilon}(s))\Vert^{2}$,
which one can check is of order $\varepsilon>0$. In the second portion we
apply a control which leads to escape prior to time $T$, with a cost that may
depend on $K$ but is independent of $T$. An example of such a control, at
least away from the origin, is as illustrated in Figure~\ref{fig:bad_large_T}.
The precise details of the construction in this second part are not important.
All that is needed is that such a control exists, which can easily be
demonstrated.
\begin{figure}

\includegraphics{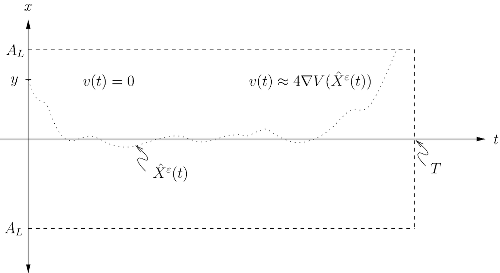}

\caption{Construction of $v$.}
\label{fig:bad_large_T}
\vspace*{-6pt}
\end{figure}

When the two parts are combined, we have a control that provides an upper
bound of the form
\[
-\varepsilon\log Q^{\varepsilon}(0,y;\bar{u})\leq-\varepsilon
C_{1}%
[T-K]+C_{2},
\]
where $C_{1}$ and $C_{2}$ are positive constants. This shows that
\begin{equation}\label{Eq:bad_lower_bound}
Q^{\varepsilon}(0,y;\bar{u})\geq e^{C_{1}[T-K]}e^{-({1}/{\varepsilon})C_{2}},
\end{equation}
and thus for fixed $\varepsilon>0$ and large $T$ the term we have called the
pre-exponential term dominates and the scheme is far from optimal. We
find that in this situation there are two exponential scalings, one in the
noise strength and one in the length of the time interval, and the issue of
which dominates depends on their relative sizes.

These effects are reflected in computational data. The measure used for comparing
different schemes is the relative error per sample. This is defined for an estimate based on $N$ samples by
\[
\mbox{Relative error per sample} \doteq \sqrt{N} \frac{\mathrm{Standard} \ \mathrm{deviation}  \ \mathrm{of} \  \mathrm{the} \  \mathrm{estimator}}%
{\mathrm{Expected} \  \mathrm{value} \  \mathrm{of} \  \mathrm{the} \ \mathrm{estimator}}.
\]
The quantities reported in the tables are based on simulated data and thus give the corresponding estimated relative error per sample. In Tables~\ref{Table1a} and
\ref{Table1b} we present both estimated values and relative errors for the
problem of escape from an interval of the form $[-A,A]$, with $A=1$. The
process is a one-dimensional Gauss--Markov model with drift $-cx$ and diffusion
coefficient $\sqrt{\varepsilon}\sigma$ [see (\ref{Eq:GM})], with $c=\sigma=1$.
In the tables, values of $T$ appear at the top, and values of $\varepsilon$
along the left-hand side. Each computed value is based on $N=10^{7}$ samples. A dash
indicates that no samples escaped. To ease the presentation the relative
errors are rounded to the nearest integer. Owing to the fact that the
subsolution based on the quasipotential is far from optimal in any sense when
$T$ is small, the relative errors are large for small $T$ and decrease until
approximately $T=2$. For larger $T$ the errors grow rapidly with~$T$. Note
that the estimated relative errors in Table~\ref{Table1b} are not necessarily
accurate for large $T$, since they are subject to the same errors that can
affect the probability estimates, but do indicate the qualitative worsening of
estimation accuracy.
\begin{table}[t]
\tabcolsep=0pt
\tablewidth=\textwidth
\caption{Using the subsolution based on quasipotential throughout. Estimated
values for different pairs~$(\varepsilon,T)$, two-sided problem}
\label{Table1a}
\begin{tabular*}{\tablewidth}{@{\extracolsep{\fill}}lcccccccc@{}}
\hline
& \multicolumn{8}{c@{}}{$\bolds{T}$}\\[-4pt]
& \multicolumn{8}{l@{}}{\hrulefill}\\
$\bolds{\varepsilon}$ & $\mathbf{0.25}$ & $\mathbf{0.5}$ & $\mathbf{1}$ & $\mathbf{1.5}$ &
$\mathbf{2.5}$ & $\mathbf{10}$ & $\mathbf{14}$ & $\mathbf{18}$\\
\hline
$0.20$ & 9.8e$-$07 & 2.0e$-$04 & 3.3e$-$03 & 9.0e$-$03  & 9.1e$-$03 & 1.2e$-$01
& 1.7e$-$01 & 2.0e$-$01\\
$0.16$ & 3.6e$-$08 & 2.5e$-$05 & 7.3e$-$04 & 2.2e$-$03 & 6.6e$-$03 & 4.0e$-$02
& 5.7e$-$02  &  6.4e$-$02 \\
$0.13$ & 9.8e$-$10  & 2.3e$-$06  & 1.3e$-$04  & 5.1e$-$04 & 1.6e$-$03  & 1.1e$-$02
&  1.5e$-$02  &  2.7e$-$02 \\
$0.11$ &  4.7e$-$11  &  2.4e$-$07  &  2.4e$-$05  &  1.1e$-$04  &  3.9e$-$04  &  2.8e$-$03
&  4.0e$-$03  &  6.0e$-$03 \\
$0.09$ & $-$ &  8.8e$-$09  &  2.1e$-$06  & 1.2e$-$05  & 5.2e$-$05  &  4.0e$-$04  &
 5.8e$-$04  &  8.3e$-$04 \\
 $0.07$ & $-$ &  5.5e$-$11  &  5.0e$-$08  &  4.3e$-$07  &  2.2e$-$06  & 1.9e$-$05  &
2.8e$-$05  & 3.7e$-$05\\
0.05 & $-$ &  5.6e$-$15  &  5.9e$-$11  &  9.7e$-$10  & 6.9e$-$09  & 7.1e$-$08 &
 1.1e$-$07  &  1.3e$-$07\\
 \hline
\end{tabular*}
\vspace*{-12pt}
\end{table}

\begin{table}[b]
\vspace*{-6pt}
\tablewidth=250pt
\caption{Using the subsolution based on quasipotential throughout. Relative
errors per sample for different pairs $(\varepsilon,T)$, two-sided problem}
\label{Table1b}
\begin{tabular*}{250pt}{@{\extracolsep{\fill}}lcccccccc@{}}\hline
& \multicolumn{8}{c@{}}{$\bolds{T}$}\\[-4pt]
& \multicolumn{8}{l@{}}{\hrulefill}\\
$\bolds{\varepsilon}$ & $\mathbf{0.25}$ & $\mathbf{0.5}$ & $\mathbf{1}$ & $\mathbf{1.5}$ &
$\mathbf{2.5}$ & $\mathbf{10}$ & $\mathbf{14}$ & $\mathbf{18}$\\
\hline
$0.20$ & \phantom{00}$91$ & \phantom{00}$7$ & $2$ & $1$ & $1$ & $10$ & $51$ & $179$\\
$0.16$ & \phantom{0}$253$ & \phantom{0}$10$ & $2$ & $1$ & $1$ & $10$ & $48$ & $139$\\
$0.13$ & \phantom{0}$748$ & \phantom{0}$16$ & $3$ & $1$ & $1$ & \phantom{0}$9$ & $48$ & $378$\\
$0.11$ & $1594$ & \phantom{0}$26$ & $3$ & $1$ & $1$ & $10$ & $42$ & $272$\\
$0.09$ & $-$ & \phantom{0}$49$ & $4$ & $2$ & $1$ & \phantom{0}$9$ & $43$ & $357$\\
$0.07$ & $-$ & $127$ & $5$ & $2$ & $1$ & \phantom{0}$8$ & $47$ & $251$\\
$0.05$ & $-$ & $714$ & $8$ & $2$ & $1$ & \phantom{0}$8$ & $42$ & $145$\\
\hline
\end{tabular*}
\end{table}

Tables~\ref{Table1c} and \ref{Table1d} present the approximated
values and relative errors for the problem with the domain $(-\infty,A]$ and
escape is possible therefore only at $A$. The results are of the same qualitative
form as before, and carried out only to $T=10$.

\begin{table}[t]
\tablewidth=\textwidth
\caption{Using the subsolution based on quasipotential throughout. Estimated
values for different pairs~$(\varepsilon,T)$, one-sided problem}
\label{Table1c}
\begin{tabular*}{\tablewidth}{@{\extracolsep{\fill}}lccccccc@{}}
\hline
& \multicolumn{7}{c@{}}{$\bolds{T}$}\\[-4pt]
& \multicolumn{7}{c@{}}{\hrulefill}\\
$\bolds{\varepsilon}$  & $\mathbf{0.25}$ & $\mathbf{0.5}$ & $\mathbf{1}$ & $\mathbf{1.5}$ &
$\mathbf{2.5}$ & $\mathbf{7}$ & $\mathbf{10}$\\
\hline
$0.20$ & 4.6e$-$07  & 1.0e$-$04  & 1.7e$-$03 & 4.5e$-$03 & 1.1e$-$02 & 4.2e$-$02
& 6.2e$-$02\\
$0.16$ & 2.1e$-$08  & 1.3e$-$05  & 3.7e$-$04  & 1.2e$-$03  & 3.3e$-$03  & 1.3e$-$02
& 2.0e$-$02 \\
$0.13$ & 2.7e$-$10  & 1.1e$-$06  & 6.5e$-$05 & 2.5e$-$04  & 7.9e$-$04  & 3.5e$-$03
& 5.3e$-$03\\
$0.11$ & 1.4e$-$11  & 1.2e$-$07  & 1.2e$-$05 & 5.7e$-$05 & 2.0e$-$04 & 9.2e$-$04
& 1.4e$-$03\\
$0.09$ & $-$ & 4.3e$-$09 & 1.1e$-$06 & 6.5e$-$06  & 2.6e$-$06 & 1.3e$-$04 &
$2.0\mathrm{e}{-}04$\\
$0.07$ & $-$ & 2.4e$-$11 & 2.5e$-$08 & 2.2e$-$07 & 1.1e$-$06  & 6.1e$-$06 &
9.3e$-$06\\
$0.05$ & $-$ & 1.7e$-$15 & 3.0e$-$12  & 4.9e$-$10  & 3.5e$-$09  & 2.2e$-$08 &
3.5e$-$08\\
\hline
\end{tabular*}
\end{table}

\begin{table}[b]
\tablewidth=250pt
\caption{Using the subsolution based on quasipotential throughout. Relative
errors per sample for different pairs $(\varepsilon,T)$, one-sided problem}
\label{Table1d}
\begin{tabular*}{250pt}{@{\extracolsep{\fill}}lccccccc@{}}
\hline
 & \multicolumn{7}{c@{}}{$\bolds{T}$}\\[-4pt]
& \multicolumn{7}{c@{}}{\hrulefill}\\
$\bolds{\varepsilon}$ & $\mathbf{0.25}$ & $\mathbf{0.5}$ & $\mathbf{1}$ & $\mathbf{1.5}$ &
$\mathbf{2.5}$ & $\mathbf{7}$ & $\mathbf{10}$\\
\hline
$0.20$ & \phantom{0}$132$ & \phantom{00}$10$ & \phantom{0}$3$ & $2$ & $2$ & $5$ & $15$\\
$0.16$ & \phantom{0}$331$ & \phantom{00}$15$ & \phantom{0}$3$ & $2$ & $2$ & $4$ & $14$\\
$0.13$ & $1418$ & \phantom{00}$23$ & \phantom{0}$4$ & $2$ & $2$ & $4$ & $14$\\
$0.11$ & $3162$ & \phantom{00}$36$ & \phantom{0}$4$ & $2$ & $2$ & $4$ & $14$\\
$0.09$ & $-$ & \phantom{00}$70$ & \phantom{0}$6$ & $3$ & $2$ & $4$ & $13$\\
$0.07$ & $-$ & \phantom{0}$194$ & \phantom{0}$7$ & $3$ & $2$ & $4$ & $12$\\
$0.05$ & $-$ & $1300$ & $12$ & $4$ & $2$ & $4$ & $12$\\
\hline
\end{tabular*}
\end{table}

It is useful to compare the two situations and identify why uniform control
of pre-exponential terms was not possible when $0\in\mathcal{D}$. In both
cases the control was based on the quasipotential, which is a valid
subsolution to the $\varepsilon=0$ problem. When using It\^{o}'s formula to bound
the second moment of the estimator, we must of course deal with the second
derivative term, which is multiplied by $\varepsilon>0$. It can happen that
this term has a sign that degrades (increases) the second moment, and indeed
this is always true in a neighborhood of the origin (this is essentially due
to the convexity of $V$ near the origin). For the case where $0\notin
\mathcal{D}$ and for sufficiently small $\varepsilon>0$, this could be
balanced by using that when $x\neq0$ $\bar{u}(x)$ and therefore $D\bar{U}(x)$
are nonzero. However, this is not possible when the rest point is included in
the domain of interest. Indeed, the running cost that is accumulated in the
construction leading to (\ref{Eq:bad_lower_bound}) corresponds to this term,
and as that argument shows it cannot be removed. The construction also
suggests how the large variance comes about, which is that some trajectories
generated under the change of measure defined by $\bar{u}$ remain in a
neighborhood of the origin for a long time, in spite of the fact that with
such dynamics the origin is an unstable equilibrium point. The likelihood
ratios along these trajectories can vary greatly and, even though they are
themselves relatively unlikely, they are likely enough to increase the
variance of the estimator to the point where it will become worse than
standard Monte Carlo. As such, they are reminiscent of the ``rogue'' trajectories which lead to poor performance of
nondynamic forms of importance sampling as discussed in
\mbox{\cite{dupwan3,glakou,glawan}}.

It will turn out that to overcome the difficulties introduced by the rest
point, one must do a much better job of approximating the optimal change of
measure than is possible based just on a time and $\varepsilon$-independent
subsolution. However, it also turns out that the additional accuracy is needed
only near the rest point, where in fact explicit time and $\varepsilon
$-dependent solutions can be found. These are then combined with the simple
time-independent subsolution based on the quasipotential to produce schemes
that are nearly optimal and which protect against both sources of significant
variance. An overview of the construction of such schemes is the topic of the
next section.

\section{Combining subsolutions with a refined local analysis: The linear
problem}
\label{S:LinearProblem}

In this section we combine a local analysis that produces a time and
\mbox{$\varepsilon$-dependent} scheme near the rest point with a scheme based on the
quasipotential elsewhere. There are of course few process models and problems
for which the related HJB equation can be solved explicitly. However, a class
of processes where this is possible are the Gauss--Markov models, that is, SDEs
with drift that is linear in the state and constant diffusion matrix. For
these processes and for terminal conditions of the appropriate form, both the
limit ($\varepsilon=0$) PDE and the prelimit ($\varepsilon>0$) PDE have an
explicit solution that can be expressed in terms of the value function of a
linear-quadratic regulator (LQR) control problem.
\begin{figure}[b]
\vspace*{-3pt}

\includegraphics{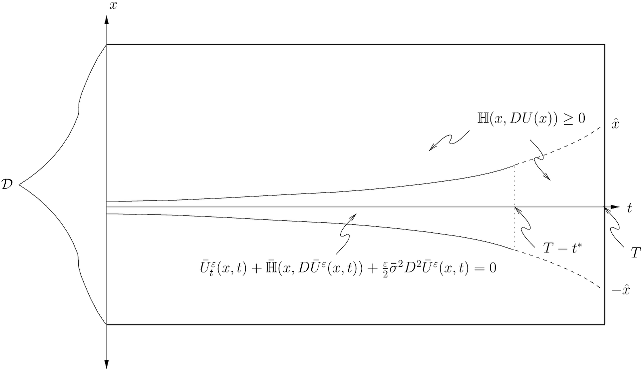}

\vspace*{-3pt}
\caption{Partition of the state space in a combined scheme.}
\label{fig:localization2}
\end{figure}

Our ultimate approach to the construction of importance sampling schemes is
suggested by Figure~\ref{fig:localization2}. The problem of interest is of the
form (\ref{Eq:EstimationTargetSmooth1}) or an analogous problem involving
escape from $\mathcal{D}$ prior to $T$. The particular problem will fix the
boundary and terminal conditions on $[0,T]\times\partial\mathcal{D}$ and
$\{T\}\times\mathcal{D}$. In the figure, we are interested in estimating the
exit probability
\[
\theta^{\varepsilon}=\mathbb{P}_{0,0} \bigl\{ X^{\varepsilon}\mbox{ hits }{-}A_{1}\mbox{ or }A_{2}\mbox{ before time }T \bigr\},
\]
where $0$ is the rest point and $\mathcal{D}=(-A_{1},A_{2})$ with $A_{1},A_{2}>0$.

In most of the domain, which is the section outside the curves that terminate
at $\pm\hat{x}$ and after time $T-t^{\ast}$, the control is based on a
subsolution $U$ constructed in terms of the quasipotential. Within the curves
the solution to (\ref{Eq:Diffusion1}) is well approximated by a Gauss--Markov
process. Hence within this region we will use a control that would be
appropriate for a problem if the process were instead the approximating
Gauss--Markov model. The function $\mathbb{\bar{H}}$ that defines the PDE for
this region is therefore the one corresponding to the Gauss--Markov process.
Besides the dynamics in this region (which are determined by the Gauss--Markov
approximation), we must choose a terminal condition. This will be given by the
minimum of two quadratic functions, one centered at $\hat{x}$ and one centered
at $-\hat{x}$. The parameter $\hat{x}$ is a nominal value that for purposes of
the present discussion can be taken to be $1$. The parameter $t^{\ast}$ plays
two roles. One is to determine the size of the region on which the true
dynamics are approximated by Gauss--Markov dynamics. The second role is related
to the fact that if the control based on the quadratics centered at ${\pm}\hat{x}$ is used all the way to $T$, then singularities in the gradient (and hence the control) will develop as $t \uparrow T$. For this
reason, we switch back to the control based on the quasipotential after
$T-t^{\ast}$, and $t^{\ast}$ must be chosen so that the subsolution property
is preserved across the handoff time $T-t^{\ast}$.

While the discussion above suggests the correct decomposition of the domain,
the actual construction is more complex, since the control should transition
nicely when moving between the regions, and the construction of a scheme for
which rigorous bounds can be proved will require additional mollification and
approximations. However, the building blocks are always subsolutions to the
indicated PDEs. The form of the surrogate problem for the Gauss--Markov model
needs to be explained, as well as various boundary and terminal conditions. To
simplify the discussion, we consider a sequence of successively more general
problems. In this paper we complete the analysis for the one-dimensional
problem. The multi-dimensional problem will be addressed in a companion paper.

\subsection{A one-dimensional Gauss--Markov model}

Our first goal is to construct a subsolution for the $\varepsilon=0$ problem
for the processes that will be used in the localization. This can be related
to the problem of estimating the probability that the solution to
\[
dX^{\varepsilon}(s)=-cX^{\varepsilon}(s)\,ds+\sqrt{\varepsilon}\bar{\sigma }\,dB(s),
\qquad X^{\varepsilon}(t)=x\in(-A,A) \label{Eq:GM}
\]
escapes from the interval $[-A,A]$ before time $T$, a problem that was also
used for the computational examples of the last subsection. The parameters $c$
and $\bar{\sigma}$ are positive constants. In this subsection we will take
$\hat{x}=A$, and because of this can postpone the issue regarding
singularities in the control at $t=T$. We thus also take~$t^{\ast}$ to be
zero, and will return to the role of $t^{\ast}$ and its selection for a
general problem in the next subsection. The corresponding PDE for the escape
probability is
\[
U_{t}^{\varepsilon}+\mathbb{H}\bigl(x,DU^{\varepsilon}\bigr)+
\frac{\varepsilon}{2}%
\bar{\sigma}^{2}D^{2}U^{\varepsilon}=0,
\qquad \mathbb{H}(x,p)=-cxp-\dfrac{1}{2}\bar{\sigma}^{2}p^{2},
\label{eqn:eps_pos_PDE}%
\]
plus the terminal and boundary conditions
\[
U^{\varepsilon}(t,x)=\cases{0, &\quad $x=\pm A,t\in [0,T]$,
\vspace*{3pt}\cr
\infty,  &\quad$x\in(-A,A),t=T$.}
\]
While simple in appearance, this equation does not have an explicit solution.

The equation obtained in the limit $\varepsilon\rightarrow0$ is more
tractable, and the unique viscosity solution can be described as follows.
$U^{0}(t,x)$ corresponds to the variational problem
\[
\inf \biggl\{ \int_{t}^{T}\frac{1}{2\bar{\sigma}^{2}}
\bigl\llvert \dot{\phi }(s)+c\phi(s)\bigr\rrvert ^{2}\,ds\dvtx \phi(t)=x,\bigl
\llvert \phi(s)\bigr\rrvert \geq A\mbox{ some }s\in [ t,T] \biggr\}.
\]
Depending on how and when the minimizing trajectory leaves $[-A,A]$, the
solution takes a particular explicit form. (In all cases the minimizer can be
found by solving the appropriate Euler--Lagrange equation.) If for the initial
condition $(t,x)$ the minimizing trajectory leaves before time $T$, then
\[
U^{0}(t,x)=F_{1}(x)\doteq\frac{c}{\bar{\sigma}^{2}} \bigl[
A^{2}-x^{2} \bigr].
\]
This is the case when the minimal cost is the negative of the quasipotential,
translated by a constant to satisfy the boundary condition at the exit
location. Such initial conditions satisfy $|x|\geq Ae^{c(t-T)}$. When $0<x<$
$Ae^{c(t-T)}$ the minimizer leaves through $A$ at exactly time $T$, and the
minimizing value is
\[
U^{0}(t,x)=F_{2}(t,x)\doteq\frac{c}{\bar{\sigma}^{2}}
\frac{(A-xe^{c(t-T)}
)^{2}}{[1-e^{2c(t-T)}]}.
\]
One can also interpret $F_{2}$ as the minimal cost for a linear quadratic
regulator with a singular terminal cost applied at time $T$, that is, a cost that
equals $0$ at $A$ and $\infty$ otherwise.

By symmetry it is clear that when $x<0$ the minimizing trajectory will exit
at~$-A$. Define
\begin{equation}
\label{Eq:BasicFcn1}
U_{+}^{0}(t,x)= \cases{F_{1}(x), & \quad\mbox{if $x\geq Ae^{c(t-T)}$,}
\vspace*{3pt}\cr
F_{2}(t,x), & \quad\mbox{if $0<x<Ae^{c(t-T)}$.}}
\end{equation}
Setting $U_{-}^{0}(t,x)=U_{+}^{0}(t,-x)$, we have
\[
U^{0}(t,x)= \cases{U_{+}^{0}(t,x), &\quad
\mbox{if $x\geq0$,}
\vspace*{3pt}\cr
U_{-}^{0}(t,x), & \quad\mbox{if $x
\leq0$.}} \label{Eq:BasicFcn2}%
\]

Note that when $x=Ae^{c(t-T)}$,
\begin{eqnarray*}
F_{2}\bigl(t,Ae^{c(t-T)}\bigr) &=& \frac{c}{\bar{\sigma}^{2}}
\frac{(A-xe^{c(t-T)})^{2}%
}{[1-e^{2c(t-T)}]}=\frac{c}{\bar{\sigma}^{2}}\frac{(A-Ae^{2c(t-T)})^{2}%
}{[1-e^{2c(t-T)}]}\\
&=& \frac{c}{\bar{\sigma}^{2}} \bigl[
A^{2}-x^{2} \bigr] =F_{1}\bigl(Ae^{c(t-T)}
\bigr),
\end{eqnarray*}
and therefore $U^{0}(t,x)$ is continuous for all $(t,x)\in [
0,T)\times [-A,A]$. In fact more is true, and one can check that
\[
DF_{2}\bigl(t,Ae^{c(t-T)}\bigr)=DF_{1}
\bigl(Ae^{c(t-T)}\bigr),
\]
and thus $U^{0}(t,x)$ has a continuous partial derivative in $x$ for
$x\in(0,A)$. The mapping $x\rightarrow F_{2}(t,x)$ is convex, and the graph of
this mapping lies above that of $F_{1}(x)$, which is concave. Thus the two
graphs intersect only at the point $(Ae^{c(t-T)},cA^{2}[1-e^{2c(t-T)}%
]/\bar{\sigma}^{2})$, where both functions and their first derivatives in $x$
agree; see Figure~\ref{fig:eps_zero_soln}.
\begin{figure}

\includegraphics{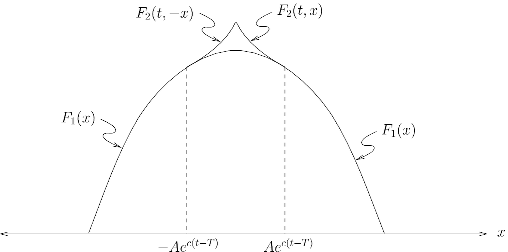}

\caption{The $\varepsilon=0$ solution for a fixed $t$.}
\label{fig:eps_zero_soln}
\end{figure}
Note also that $\hat{U}^{0}(t,x)\doteq U_{+}^{0}(t,x)$ for $x\in(-\infty,A]$
is the solution to the $\varepsilon=0$ problem with escape from $(-\infty,A]$
(a ``one-sided'' version of the problem of
escape from $[-A,A]$).

Simulation data for the schemes based on these two value functions are
presented below. The approximated values are omitted since they are
qualitatively similar to those in Tables~\ref{Table1a}~and~\ref{Table1c}, and
only relative errors are presented. Table~\ref{Table2a} gives data based on
$U^{0}$ for the problem of two-sided exit, and should be compared to Table~\ref{Table1b}. The use of the solution to HJB equation for $\varepsilon=0$
drastically improves the performance for small $T$, which is due to the fact
that the subsolution based on the quasipotential is a very poor approximation
to the solution for $\varepsilon>0$ for such~$T$. However, for large $T$ the
two schemes are comparably bad.
\begin{table}[t]
\tablewidth=250pt
\caption{Using the subsolution based on the explicit solution to the
$\varepsilon=0$ HJB equation. Relative errors per sample for different pairs
$(\varepsilon,T)$, two-sided problem}
\label{Table2a}
\begin{tabular*}{250pt}{@{\extracolsep{\fill}}lcccccccc@{}}
\hline
& \multicolumn{8}{c@{}}{$\bolds{T}$}\\[-4pt]
& \multicolumn{8}{l@{}}{\hrulefill}\\
$\bolds{\varepsilon}$ & $\mathbf{0.25}$ & $\mathbf{0.5}$ & $\mathbf{1}$ & $\mathbf{1.5}$ &
$\mathbf{2.5}$ & $\mathbf{10}$ & $\mathbf{14}$ & $\mathbf{18}$\\
\hline
$0.20$ & $2$ & $2$ & $1$ & $1$ & $1$ & $10$ & $57$ & $266$\\
$0.16$ & $3$ & $3$ & $1$ & $1$ & $1$ & $10$ & $55$ & $265$\\
$0.13$ & $4$ & $2$ & $1$ & $1$ & $1$ & \phantom{0}$9$ & $51$ & $394$\\
$0.11$ & $3$ & $2$ & $2$ & $1$ & $1$ & \phantom{0}$9$ & $44$ & $177$\\
$0.09$ & $3$ & $3$ & $2$ & $1$ & $1$ & \phantom{0}$9$ & $43$ & $314$\\
$0.07$ & $3$ & $3$ & $2$ & $2$ & $1$ & \phantom{0}$9$ & $67$ & $520$\\
$0.05$ & $4$ & $2$ & $2$ & $2$ & $1$ & \phantom{0}$8$ & $50$ & $278$\\
\hline
\end{tabular*}
\end{table}

The data for exit from one side are given in Table~\ref{Table2b}. In contrast
with the two-sided problem, the performance does not degrade so quickly with
large $T$. This suggests that the decline in performance is not so much due to
using the approximating $\varepsilon=0$ PDE, but rather the possible lack of
regularity of the solution to this equation.
\begin{table}[b]
\tablewidth=250pt
\caption{Using the subsolution based on the explicit solution to the
$\varepsilon=0$ HJB equation. Relative errors per sample for different pairs
$(\varepsilon,T)$, one-sided problem}
\label{Table2b}
\begin{tabular*}{250pt}{@{\extracolsep{\fill}}lccccccc@{}}
\hline
  & \multicolumn{7}{c@{}}{$\bolds{T}$}\\[-4pt]
  & \multicolumn{7}{l@{}}{\hrulefill}\\
$\bolds{\varepsilon}$  & $\mathbf{0.25}$ & $\mathbf{0.5}$ & $\mathbf{1}$ & $\mathbf{1.5}$ &
$\mathbf{2.5}$ & $\mathbf{7}$ & $\mathbf{10}$\\
\hline
$0.20$ & $1$ & $1$ & $1$ & $1$ & $1$ & $2$ & $3$\\
$0.16$ & $1$ & $1$ & $1$ & $1$ & $1$ & $2$ & $4$\\
$0.13$ & $1$ & $1$ & $1$ & $1$ & $1$ & $2$ & $3$\\
$0.11$ & $1$ & $1$ & $1$ & $1$ & $1$ & $2$ & $3$\\
$0.09$ & $1$ & $1$ & $1$ & $1$ & $1$ & $2$ & $3$\\
$0.07$ & $1$ & $1$ & $1$ & $1$ & $1$ & $2$ & $3$\\
$0.05$ & $1$ & $1$ & $1$ & $1$ & $1$ & $2$ & $3$\\
\hline
\end{tabular*}
\end{table}

We recall that the difficulty with the use of the subsolution based on
the quasipotential alone [here $F_{1}(x)$] was that in a neighborhood of $x=0$,
the second derivative was negative. As discussed in Section~\ref{S:EffectivenessOfSubsolutions}, when $T$ is large this concavity of the value function leads to poor
control of the variance of the associated scheme for both the one-sided and
two-sided problems.
When using the time-dependent $\varepsilon=0$ PDE as the basis for a scheme
for the one-sided problem, the introduction of $F_{2}(t,x)$ appears to have
largely mitigated the problem due to the second derivative term. Note that
this function determines the value of $\hat{U}^{0}(t,x)$ when $x=0$ and is
convex rather than concave in $x$. In contrast, for the two-sided problem
there is a concave singularity at the origin. There the second derivative in
$x$ is $-\infty$, and the subsolution property for the $\varepsilon>0$ problem
again fails. What is needed is a subsolution that works across the point $x=0$
for the $\varepsilon>0$ problem. Such a subsolution can be constructed using
the mollification introduced in the next subsection. Simulations based on such
a mollified version of the $\varepsilon=0$ subsolution (and with mollification
parameter $\delta=2\varepsilon$) are presented in Table~\ref{Table6b}, and
support the claim just made (the increase of relative errors when $T=23$ is again due to concavity creeping in, and will be dealt with in the
discussion and constructions below).
\begin{table}
\tablewidth=250pt
\caption{Using the subsolution based on the mollification of
$U_{+}^{0}(t,x)\wedge U_{-}^{0}(t,x)$. Relative errors per sample with
$\hat{x}=A=1$, two-sided problem}
\label{Table6b}
\begin{tabular*}{250pt}{@{\extracolsep{\fill}}lccccccccc@{}}
\hline
& \multicolumn{9}{c@{}}{$\bolds{T}$}\\[-4pt]
& \multicolumn{9}{l@{}}{\hrulefill}\\
$\bolds{\varepsilon}$ & $\mathbf{0.25}$ & $\mathbf{0.5}$ & $\mathbf{1}$ & $\mathbf{1.5}$ &
$\mathbf{2.5}$ & $\mathbf{10}$ & $\mathbf{14}$ & $\mathbf{18}$ & $\mathbf{23}$\\
\hline
$0.20$ & $1$ & $1$ & $1$ & $1$ & $1$ & $1$ & $3$ & $5$ & $61$\\
$0.16$ & $1$ & $1$ & $1$ & $1$ & $1$ & $1$ & $3$ & $5$ & $68$\\
$0.13$ & $1$ & $1$ & $1$ & $1$ & $1$ & $1$ & $2$ & $4$ & $64$\\
$0.11$ & $2$ & $1$ & $1$ & $1$ & $1$ & $1$ & $2$ & $4$ & $63$\\
$0.09$ & $2$ & $2$ & $2$ & $2$ & $1$ & $1$ & $2$ & $4$ & $58$\\
$0.07$ & $2$ & $2$ & $2$ & $2$ & $1$ & $1$ & $2$ & $3$ & $55$\\
$0.05$ & $2$ & $2$ & $2$ & $2$ & $1$ & $2$ & $2$ & $3$ & $52$\\
\hline
\end{tabular*}
\end{table}

In spite of the shortcomings we have described in this section, the solution
to the $\varepsilon=0$ problem serves as the starting point for a construction
that can be shown to perform well both in theory and in practice. There are
three important modifications that are needed:
\begin{itemize}
\item The first is that, in order to effectively deal with the $\varepsilon>0$
dynamics and in particular to avoid the degradation still present in Table~\ref{Table6b} when $T=23$, the region where the $F_{2}$ subsolution
determines the dynamics must be enlarged. The solution to the $\varepsilon=0$
problem constructed above leads to a region whose width vanishes exponentially in
$t-T$; see (\ref{Eq:BasicFcn1}).

\item The second modification is the use of a  mollification to eliminate
singularities such as the one at $x=0$ and help guarantee a global subsolution
property for the $\varepsilon>0$ PDE. The particular mollification we use is
very convenient, and was first used for importance sampling in \cite{dupwan5},
though with a somewhat different intended use.

\item The final modification was also alluded to previously, which is to
revert to the quasipotential based control in an interval of the form
$[T-t^{\ast},T]$. All three modifications will be introduced in the next
subsection in the context of the Gauss--Markov process.
\end{itemize}

\begin{remark}
It bears repeating that Tables~\ref{Table2a} and  \ref{Table2b} show one must be careful regarding singularities in the gradient of any subsolution used for importance sampling.
In particular, in the absence of rigorous estimates showing that the use of a particular weak sense subsolution is justified, one should appropriately
mollify and work with classical sense subsolutions.
\end{remark}

\subsection{Simulation scheme for the Gauss--Markov model}
\label{SS:1DimLinearModel_algorithm}

In this subsection we generalize the construction of the last subsection. As
discussed there, the generalizations are needed to address issues that play a
role in both the theoretical analysis of the scheme and its practical
performance. After introducing these generalizations, we will demonstrate
(numerically and theoretically) that the suggested change of measure does not
degrade in performance as $T$ gets larger and is close to optimality not only
as $\varepsilon\downarrow0$, but for fixed $\varepsilon>0$ as well.

We begin by introducing the first generalization, which is to replace the
terminal condition $\infty\cdot(x-\hat{x})^{2}+F_{1}(\hat{x})$, which was used
to define $F_{2}(t,x)$, by a terminal condition of the form $M(x-\hat{x}%
)^{2}/2+F_{1}(\hat{x})$. The parameter $\hat{x}$ replaces $A$ and is a nominal
value introduced to disconnect the localization from the boundary. The
solution to the LQR that corresponds to $M(x-\hat{x})^{2}/2+F_{1}(\hat{x})$,
which will be denoted $F_{2}^{M}(t,x)$, is automatically smaller than
$F_{2}(t,x)=F_{2}^{\infty}(t,x)$. The motivation for replacing $\infty$ by
$M/2$ is that we want the solution to the LQR problem [i.e., $F_{2}%
^{M}(t,x)$] to determine the control near $x=0$ for a set whose width is
uniformly (in $t$) bounded below away from zero. As discussed in the last
section, the second derivative term associated with $F_{1}$ is of the wrong
sign and degrades performance. The neighborhood where $F_{2}^{M}%
(t,x)<F_{1}(x)$ does not degenerate as $T-t\rightarrow\infty$, and its size is
decreasing in $M$. The introduction of $M$ complicates the construction by
also requiring\vspace*{1pt} mollification (in addition to the one that will be needed at
$x=0$), since $F_{2}^{M}$ can no longer be smoothly merged with $F_{1}$.

For $M\in(0,\infty)$ the solution to this LQR takes the form
\begin{eqnarray}
F_{2}^{M}(t,x)&=& a^{M}(t) \bigl( x-
\hat{x}e^{-c(t-T)} \bigr) ^{2}+F_{1}(\hat {x}),\nonumber\\
\eqntext{a^{M}(t) = \frac{ce^{2c(t-T)}}{(2c/M+\bar{\sigma}^{2})-\bar{\sigma
}^{2}e^{2c(t-T)}}>0.} \label{Eq:FM2_function}%
\end{eqnarray}
Recall that $F_{1}(x)=c[A^{2}-x^{2}]/\bar{\sigma}^{2}$ so that $F_{1}(A)=0$.
Define $F_{2,+}^{M}(t,x)=F_{2}^{M}(t,x),F_{2,-}^{M}(t,x)=F_{2}^{M}(t,-x)$. It
will be important to know which of $F_{2,+}^{M}$, $F_{2,-}^{M}$ and $F_{1}$
is smallest, and we note here several properties; see Figure~\ref{Fig:various_functions}. Let $K\doteq2c/M+\bar
{\sigma}^{2}$. The first is that there are two real solutions to $F_{2,+}%
^{M}(t,x)=F_{1}(x)$, and these take the form
\begin{equation}\label{Eq:roots}
\frac{\bar{\sigma}^{2}\hat{x}}{K} \biggl( e^{c(t-T)}\pm\sqrt{\frac{2cK}%
{M\bar{\sigma}^{4}}-
\frac{2c}{M\bar{\sigma}^{2}}e^{2c(t-T)}} \biggr).
\end{equation}
Between these roots $F_{2,+}^{M}(t,x)<F_{1}(x)$, and on the complement of the
interval the reverse inequality holds. The limit $t-T\rightarrow-\infty$ gives
the asymptotic endpoints of the interval where $F_{2,+}^{M}(t,x)<F_{1}(x)$,
which are
\[
\pm\hat{x}\sqrt{\frac{2c}{2c+M\bar{\sigma}^{2}}}.
\]

We point out here that a natural scaling for this problem, given that the size
of the neighborhood of zero where the quasipotential-based subsolution fails
for the $\varepsilon>0$ system scales like $\sqrt{\varepsilon}$, is to ask
that this width scale as $2\varepsilon^{\kappa},\kappa\in [0,1/2]$. If,
for example, the desired width is $2\varepsilon^{1/4}$, then when
$\varepsilon>0$ is small, we should take $M\approx2\hat{x}^{2}%
c/\bar{\sigma}^{2}\varepsilon^{1/2}$.

The next adaptation is required so that singularities in the control
associated with $F_{2,\pm}^{M}(t,x)$ as $t\uparrow T$ do not cause a problem,
as well as for purposes of localization. For a parameter $t^{\ast}\geq0$, the subsolution property will
require $U^{0}(T-t^{\ast},x)\leq F_{1}(x)$ for all $x\in [-A,A]$, where
(with an abuse of notation) $U^{0}(t,x)\doteq F_{2,+}^{M}(t,x)\wedge U_{+}%
^{0}(t,x)\wedge F_{2,-}^{M}(t,x)\wedge U_{-}^{0}(t,x)$. This is true if and
only if the smaller solution to $F_{2,+}^{M}(T-t^{\ast},x)=F_{1}(x)$ is less
than or equal to zero. This root was found to be
\[
\frac{\bar{\sigma}^{2}\hat{x}}{K} \biggl( e^{-ct^{\ast}}-\sqrt{\frac{2cK}%
{M\bar{\sigma}^{4}}-
\frac{2c}{M\bar{\sigma}^{2}}e^{-2ct^{\ast}}} \biggr),
\]
and the restriction that this be nonpositive can be simplified to
\[
t^{\ast}\geq-\frac{1}{2c}\log\frac{2c}{M\bar{\sigma}^{2}}.
\]
The inequality $U^{0}(T-t^{\ast},x)\leq F_{1}(x)$ will ensure that the
subsolution property is preserved if we switch from using $U^{0}$ for $t\leq
T-t^{\ast}$ to using $F_{1}$ for $t>T-t^{\ast}$. If $T\leq t^{\ast}$, this
means that we always use $F_{1}$, but our interest here is in large $T$.

We assume that $M\geq4c/\bar{\sigma}^{2}$ so that $t^{\ast}>0$. To guarantee
that the smaller root is strictly negative and to conveniently satisfy a bound
used later, we assume
\begin{equation}\label{Eq:bound_on_tstar}
t^{\ast}\geq-\frac{2}{c}\log\frac{2c}{M\bar{\sigma}^{2}}.
\end{equation}

Besides enforcing the subsolution property across the handoff at time
$T-t^{\ast}$, the selection of $t^{\ast}$ plays a key role in determining the
region used for the localization for the general nonlinear problem. Owing to
the exponential decay, one can confine the localization to a small region with
a modest value of $t^{\ast}$. Suppose we consider confining to a region that
scales as $2\varepsilon^{1/4}$ for all $t\in [0,T-t^{\ast}]$ and
arbitrarily large $T$. As discussed previously,\vspace*{1pt} for small $t$ and large
$T-t^{\ast}$ this suggests that $M$ be approximately $2\hat{x}^{2}%
c/\bar{\sigma}^{2}\varepsilon^{1/2}$, which means that $t^{\ast}\geq-\frac
{2}{c}\log[\varepsilon^{1/2}/\hat{x}^{2}]$. Recalling that the loss in
performance of the importance sampling scheme when the quasipotential-based
subsolution is used scales like $\varepsilon c$, this gives the loss over the interval $[T-t^{\ast},T]$ as scaling like $-\varepsilon
\log[\varepsilon/\hat{x}^{2}]$. We will return to such considerations in the
next section.
\begin{figure}

\includegraphics{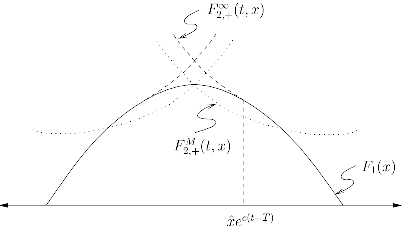}

\caption{Relations between $F_{1},F_{2,\pm}^{\infty}$ and $F_{2,\pm}^{M}$.}
\label{Fig:various_functions}
\end{figure}

Finally there is the issue of mollification. Owing to the replacement of
$F_{2,\pm}^{\infty}(t,x)$ by $F_{2,\pm}^{M}(t,x)$, there are several
sources of discontinuity in the gradient. The subsolution prior to
mollification would generally take the form $F_{2,+}^{M}(t,x)\wedge U_{+}%
^{0}(t,x)\wedge F_{2,-}^{M}(t,x)\wedge U_{-}^{0}(t,x)$. Since we know
that $F_{2,+}^{M}(t,x)\wedge F_{2,-}^{M}(t,x)\leq F_{1}(x)$ near
$x=0$ (see Figure~\ref{Fig:various_functions}), we can instead use the simpler expression
\[
F_{2,+}^{M}(t,x)\wedge F_{2,-}^{M}(t,x)
\wedge F_{1}(x).
\]

We next state a result that will be used to derive performance bounds for
schemes based on the mollification. The proof is deferred to the \hyperref[app]{Appendix}. We
consider the general one-dimensional process model
\[
dX^{\varepsilon}=b\bigl(X^{\varepsilon}\bigr)\,dt+\sqrt{\varepsilon}\sigma
\bigl(X^{\varepsilon
}\bigr)\,dB,
\]
where $b$ and $\sigma$ are Lipschitz continuous. Letting $\alpha
(x)=\sigma(x)^{2}$, the relevant \mbox{$\varepsilon$-dependent} PDE is
\begin{eqnarray*}
\mathcal{G}^{\varepsilon} [ U ] (t,x)&=& U_{t}(t,x)+DU(t,x)b(x)-
\frac{1}{2}\bigl\llvert \sigma(x)DU(t,x)\bigr\rrvert ^{2}+
\frac{\varepsilon}{2}%
\alpha(x)D^{2}U(t,x)\\
&=& 0.
\label{Eq:eps}%
\end{eqnarray*}

\begin{lem}
\label{Lem:mollification}
Suppose that the functions $\tilde{U}_{i}%
\dvtx [0,T]\times\mathbb{R}$, $i=1,\ldots,n$ are twice continuously differentiable
in $x$, once continuously differentiable in $t$ and satisfy
\[
\mathcal{G}^{\varepsilon}[\tilde{U}_{i}](t,x)\geq
\gamma_{i}(x,t,\varepsilon)
\]
for given lower bounds $\gamma_{i}(x,t,\varepsilon)$. For $\delta>0$ define
\[
U^{\delta}(t,x)=-\delta\log \Biggl( \sum_{i=1}^{n}e^{-({1}/{\delta})\tilde
{U}_{i}(t,x)}
\Biggr), \label{Eq:2-moll}%
\]
and define the weights
\[
\rho_{i} ( t,x;\delta ) =\frac{e^{-({1}/{\delta})\tilde{U}%
_{i}(t,x)}}{\sum_{i=1}^{n}e^{-({1}/{\delta})\tilde{U}_{i}(t,x)}}. \label{Eq:2-moll-weights}%
\]
Then
\[
\min \bigl\{ \tilde{U}_{i}(t,x),i=1,\ldots,n \bigr\} \geq
U^{\delta}
(t,x)\geq\min \bigl\{ \tilde{U}_{i}(t,x),i=1,
\ldots,n \bigr\} -\delta\log n, \label{Eq:2-moll-ineq}%
\]
and for $0<\varepsilon\leq\delta$
\begin{eqnarray*}
 \mathcal{G}^{\varepsilon}\bigl[U^{\delta}\bigr](t,x)
& \geq &
\frac{1}{2} \biggl( 1-\frac{\varepsilon}{\delta} \biggr) \Biggl[ \sum
_{i=1}^{n}\rho_{i} ( t,x;\delta ) \bigl
\llvert \sigma(x)D\tilde{U}_{i}(t,x)\bigr\rrvert ^{2}\\
&&\hspace*{9pt}\qquad\qquad{}-\Biggl
\llvert \sum_{i=1}^{n}\rho_{1} (
t,x;\delta ) \sigma(x)D\tilde{U}_{i}(t,x)\Biggr\rrvert ^{2}
\Biggr]
\\
&&{}+\sum_{i=1}^{n}\rho_{i} (
t,x;\delta ) \gamma _{i}(x,t,\varepsilon)
\\
&\geq&  \sum_{i=1}^{n}\rho_{i}
( t,x;\delta ) \gamma_{i}%
(x,t,\varepsilon).
\end{eqnarray*}
\end{lem}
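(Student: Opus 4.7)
The proof has two parts: the sandwich bound on $U^{\delta}$ and the lower bound on $\mathcal{G}^{\varepsilon}[U^{\delta}]$. The first part is a routine log-sum-exp estimate. Letting $m(t,x)=\min_i \tilde U_i(t,x)$, factor out $e^{-m/\delta}$ inside the logarithm. One term in the sum equals $1$ and the remaining $n-1$ are in $(0,1]$, so
\[
e^{-m/\delta} \leq \sum_{i=1}^{n} e^{-\tilde U_i/\delta} \leq n\, e^{-m/\delta},
\]
and taking $-\delta\log$ of both sides yields the claimed bracketing. I would dispose of this in two lines.

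For the main inequality, the plan is a direct differentiation of $U^{\delta}$ followed by matching terms against $\sum_i \rho_i \mathcal{G}^{\varepsilon}[\tilde U_i]$. Writing $\rho_i$ as in the statement, standard calculus for the log-sum-exp gives
\[
\partial_t U^{\delta} = \sum_i \rho_i \,\partial_t \tilde U_i, \qquad D U^{\delta} = \sum_i \rho_i \, D\tilde U_i,
\]
and, since $D\rho_i = -\tfrac{\rho_i}{\delta}\bigl(D\tilde U_i - \sum_j \rho_j D\tilde U_j\bigr)$, the second derivative produces a variance-type correction
\[
D^2 U^{\delta} = \sum_i \rho_i D^2 \tilde U_i - \frac{1}{\delta}\Bigl[\sum_i \rho_i |D\tilde U_i|^2 - \bigl|\sum_i \rho_i D\tilde U_i\bigr|^2\Bigr].
\]
Plugging these into $\mathcal{G}^{\varepsilon}$ and subtracting the convex combination $\sum_i \rho_i \mathcal{G}^{\varepsilon}[\tilde U_i]$, the linear-in-derivative pieces ($\partial_t$ and the $b\cdot D$ terms) cancel exactly. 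What remains is the Hamiltonian's quadratic piece together with the correction from $D^2 U^{\delta}$, namely
\[
\mathcal{G}^{\varepsilon}[U^{\delta}] - \sum_i \rho_i\, \mathcal{G}^{\varepsilon}[\tilde U_i] = \frac{1}{2}\Bigl(1-\frac{\varepsilon}{\delta}\Bigr)\Bigl[\sum_i \rho_i |\sigma(x) D\tilde U_i|^2 - \bigl|\sum_i \rho_i \sigma(x) D\tilde U_i\bigr|^2\Bigr].
\]
The matching of coefficients here is the main bookkeeping step; the crucial observation is that the $-\tfrac{1}{2}|\sigma DU^{\delta}|^2$ in $\mathcal{G}^{\varepsilon}$ and the $-1/\delta$ in $D^2 U^{\delta}$ combine to exactly the stated $\tfrac{1}{2}(1-\varepsilon/\delta)$.

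Finally, since $\rho_i \geq 0$ and $\sum_i \rho_i = 1$, Jensen's inequality applied to the convex function $z\mapsto |z|^2$ with weights $\rho_i$ gives
\[
\sum_i \rho_i |\sigma(x) D\tilde U_i|^2 - \bigl|\sum_i \rho_i \sigma(x) D\tilde U_i\bigr|^2 \;\geq\; 0,
\]
and the hypothesis $\varepsilon \leq \delta$ ensures the prefactor $\tfrac{1}{2}(1-\varepsilon/\delta)$ is nonnegative. Combining with $\mathcal{G}^{\varepsilon}[\tilde U_i] \geq \gamma_i$ yields both displayed lower bounds. The only nontrivial step is the identification of the correction term and tracking the signs through the Hamiltonian; everything else is differentiation of log-sum-exp and Jensen.
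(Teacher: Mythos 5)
Your proposal is correct and follows essentially the same route as the paper's proof: direct differentiation of the log-sum-exp mollification, the variance-type correction in $D^{2}U^{\delta}$ combining with the quadratic term of the Hamiltonian to produce the factor $\tfrac{1}{2}(1-\varepsilon/\delta)$, and Jensen's inequality for the final nonnegativity (the paper simply carries this out for $n=2$ without loss of generality and leaves the elementary sandwich bound implicit).
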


Based on this result we consider the mollification of $U^{0}(t,x)\doteq
F_{2,+}^{M}(t,x)\wedge F_{2,-}^{M}(t,x)\wedge F_{1}(x)$,
\[
U^{\delta}(t,x)=-\delta\log \bigl( e^{-({1}/{\delta})F_{2,+}^{M}%
(t,x)}+e^{-({1}/{\delta})F_{2,-}^{M}(t,x)}+e^{-({1}/{\delta})F_{1}%
(x)}
\bigr).
\]
For reasons that will be made clear in the analysis (see Lemma
\ref{L:Region_y_hatx}), we generally use $\delta=2\varepsilon$.

As discussed previously, we return to the control based on the quasipotential
for the last $t^{\ast}$ units of time, and so the subsolution on the whole domain $[-A,A] \times [0,T]$ takes the form%
\begin{equation}
\label{Eq:ControlDesignLinear}
\bar{U}^{\delta}(t,x)=\cases{ F_{1}(x), &\quad
$t>T-t^{\ast}$,\vspace*{3pt}
\cr
U^{\delta}(t,x), &\quad $t\leq
T-t^{\ast}$.}
\end{equation}
Note that the mollification reduces values, in that $U^{\delta}(t,x)\leq
U^{0}(t,x)$, and so the requirement $U^{\delta}(T-t^{\ast},x)\leq F_{1}(x)$
for $x\in [-A,A]$ holds, since $U^{0}(T-t^{\ast},x)\leq F_{1}(x)$.

\subsection{Analysis of the scheme}
\label{S:AnalysisLinearProblem}

We next present a rigorous and nonasymptotic bound for the second moment of
the importance sampling scheme constructed in the last subsection. To derive a
bound that is valid for $\varepsilon>0$ and uniform in $T$, we use the same
representation as in (\ref{Eq:LBQ}). In particular, we choose $\bar
{U}(t,x)=\bar{U}^{\delta}(t,x)$ defined via (\ref{Eq:ControlDesignLinear}) for
the design and $\bar{W}(t,x)=\bar{U}^{\delta,\eta}(t,x)$ for the analysis, where
\[
\bar{U}^{\delta,\eta}(t,x)=\cases{ F_{1}(x), &\quad$t>T-t^{\ast}$,
\vspace*{3pt}
\cr
U^{\delta,\eta}(t,x), &\quad $t\leq T-t^{\ast}$,}
\label{Eq:ControlAnalysisLinear}
\]
with
\[
U^{\delta,\eta}(t,x)=(1-\eta)U^{\delta}(t,x).
\]
As discussed in Section~\ref{S:EffectivenessOfSubsolutions} this choice is driven by the need for a subsolution
for the $\varepsilon>0$ dynamics with an explicit form, and this limitation of
the technique leads to conservative bounds on the true performance. To
simplify notation, for smooth functions $W,U$ we define
%
\begin{equation}\label{Eq:Ge_2}
\mathcal{G}^{\varepsilon}[W,U](t,x)=\mathcal{G}^{\varepsilon}[W](t,x)-
\tfrac{1}{2}\bigl\llvert \bar{\sigma} \bigl( DW(t,x)-DU(t,x) \bigr) \bigr
\rrvert ^{2},
\end{equation}
where $\mathcal{G}^{\varepsilon}[W]$ is defined right after (\ref{Eq:game_rep}). Using that
$\mathcal{G}^{0}[F_{1}]=\mathcal{G}^{0}[F_{2}^{M}]=0$,
\begin{eqnarray*}
\mathcal{G}^{\varepsilon}[F_{1}](x) &=& \frac{\varepsilon}{2}\bar{
\sigma}^{2}%
D^{2}F_{1}(x)=-\varepsilon c
\quad\mbox{and}\\
\mathcal{G}^{\varepsilon}\bigl[F_{2}^{M}
\bigr](t,x) &=& \frac{\varepsilon}{2}\bar{\sigma}^{2}D^{2}F_{2}^{M}%
(t,x)=
\varepsilon\bar{\sigma}^{2}a^{M}(t). \label{Eq:values_for_pieces}%
\end{eqnarray*}
Since the problem is symmetric, it suffices to consider only $x\in [
0,A]$. A key role is played by the weights associated with the exponential
mollification of the subsolution, which take the forms
\[
\rho_{2}^{M,\pm}(t,x;\delta)=\frac{e^{-({1}/{\delta})F_{2,\pm}^{M}(t,x)}%
}{e^{-({1}/{\delta})F_{2,+}^{M}(t,x)}+e^{-({1}/{\delta})F_{2,-}^{M}%
(t,x)}+e^{-({1}/{\delta})F_{1}(x)}}%
\]
and
\[
\rho_{1}(t,x;\delta)=\frac{e^{-({1}/{\delta})F_{1}(x)}}{e^{-({1}/{\delta})F_{2,+}^{M}(t,x)}+e^{-({1}/{\delta})F_{2,-}^{M}(t,x)}+e^{-({1}/{\delta})F_{1}(x)}}.
\]

To determine which of these dominate at any $(t,x)$, the relative sizes of the
functions $F_{2,\pm}^{M}(t,x)$ and $F_{1}(x)$ are required, with smaller
functions corresponding to more dominant weights. For this\vspace*{1pt} reason the
solutions to $F_{2,\pm}^{M}(t,x)=F_{1}(x)$ play an important role, and
especially the larger one identified in (\ref{Eq:roots}); see Figure~\ref{Fig:various_functions}. The following bounds on this root will be used to
partition the domain in the analysis. Let $z\doteq$ $\hat{x}(c/M\bar{\sigma
}^{2})^{1/2}/2$, and let $R(t)$ denote the larger root in (\ref{Eq:roots}).
Then we claim under (\ref{Eq:bound_on_tstar}) that
\[
2z\leq R(t)\leq8z \qquad \mbox{for all }t\in \bigl[0,T-t^{\ast}\bigr]
\]
and uniformly in $T$. We recall the definition $K\doteq(2c/M)+\bar{\sigma}%
^{2}\in(\bar{\sigma}^{2},\infty)$ and that $M\geq4c/\bar{\sigma}^{2}$, so that
$K\in(\bar{\sigma}^{2},2\bar{\sigma}^{2})$. Then the smallest possible value
of $R(t)$ satisfies
\[
\frac{\bar{\sigma}^{2}\hat{x}}{K}\sqrt{\frac{2cK}{M\bar{\sigma}^{4}}}%
>\frac{\bar{\sigma}^{2}\hat{x}}{\sqrt{2}\bar{\sigma}}\sqrt{
\frac{2c}%
{M\bar{\sigma}^{4}}}=\hat{x}\sqrt{\frac{c}{M\bar{\sigma}^{2}}}=2z,\
\]
while the largest satisfies
\begin{eqnarray*}
\frac{\bar{\sigma}^{2}\hat{x}}{K} \biggl( e^{-ct^{\ast}}+\sqrt{\frac{2cK}%
{M\bar{\sigma}^{4}}-
\frac{2c}{M\bar{\sigma}^{2}}e^{-2ct^{\ast}}} \biggr) & \leq & \frac{\bar{\sigma}^{2}\hat{x}}{K} \biggl(
\frac{2c}{M\bar{\sigma}^{2}%
}+\sqrt{\frac{2cK}{M\bar{\sigma}^{4}}} \biggr)\\
& \leq & \frac{\bar{\sigma}^{2}\hat{x}}{\bar{\sigma}^{2}}
\biggl( \frac{2c}%
{M\bar{\sigma}^{2}}+\sqrt{\frac{4c}{M\bar{\sigma}^{2}}} \biggr)
\\
& <& \hat{x} \biggl( 4\sqrt{\frac{c}{M\bar{\sigma}^{2}}} \biggr) =8z,
\end{eqnarray*}
where the first inequality uses $t^{\ast}\geq-\frac{2}{c}\log\frac{2c}%
{M\bar{\sigma}^{2}}$. We set $H\doteq10z>8z$.

In order to obtain bounds on the performance under the corresponding scheme,
we need to bound $\mathcal{G}^{\varepsilon}[U^{\delta,\eta},U^{\delta}](t,x)$
from below in various regions. By (\ref{Eq:Ge_2})
\begin{eqnarray}
\qquad\mathcal{G}^{\varepsilon}\bigl[U^{\delta,\eta},U^{\delta}\bigr](t,x) & =&
\mathcal{G}%
^{\varepsilon}\bigl[U^{\delta,\eta}\bigr](t,x)-
\tfrac{1}{2}\bigl\llvert \bar{\sigma} \bigl( DU^{\delta,\eta}(t,x)-DU^{\delta}(t,x)
\bigr) \bigr\rrvert ^{2}
\nonumber
\nonumber
\\[-8pt]
\label{Eq:CombinedExpression}
\\[-8pt]
\nonumber
& =& \mathcal{G}^{\varepsilon}\bigl[U^{\delta,\eta}\bigr](t,x)-\tfrac{1}{2}
\eta ^{2}\bigl\llvert \bar{\sigma}DU^{\delta}(t,x)\bigr\rrvert
^{2}.
\end{eqnarray}
We will use the notation
\[
\gamma_{1}=\mathcal{G}^{\varepsilon}[F_{1}](x)=-
\varepsilon c  \quad\mbox{and}\quad \gamma_{2}^{M}(t)=
\mathcal{G}^{\varepsilon}\bigl[F_{2}^{M}\bigr](t,x)=
\varepsilon \bar{\sigma}^{2}a^{M}(t). \label{Eq:def_gamM}%
\]
Straightforward calculations and some algebra give
\[
\mathcal{G}^{\varepsilon}\bigl[U^{\delta,\eta}\bigr](t,x)\geq(1-\eta)
\mathcal{G}%
^{\varepsilon}\bigl[U^{\delta}\bigr](t,x)+
\tfrac{1}{2} \bigl( \eta-\eta^{2} \bigr) \bigl\llvert \bar{
\sigma}DU^{\delta}(t,x)\bigr\rrvert ^{2}.
\]
For notational convenience, define
\begin{eqnarray*}
\beta_{0}(t,x) & =& \bar{\sigma}^{2} \bigl[
\rho_{2}^{M,+}\bigl|DF_{2,+}^{M}\bigr|^{2}+
\rho_{2}^{M,-}\bigl|DF_{2,-}
^{M}\bigr|^{2}+
\rho_{1}|DF_{1}|^{2}
\\
&&\hspace*{25pt}{}-\bigl\llvert \rho_{2}^{M,+}DF_{2,+}^{M}+
\rho_{2}^{M,-}DF_{2,-}^{M}+
\rho_{1}DF_{1}\bigr\rrvert ^{2} \bigr] (t,x).
\end{eqnarray*}
Note that by Jensen's inequality $\beta_{0}(t,x)\geq0$. We next apply Lemma
\ref{Lem:mollification} to $\mathcal{G}^{\varepsilon}[{U}^{\delta,\eta}](t,x)$
(while suppressing the dependence on $\delta$ in the notation for the $\rho
$'s) and use (\ref{Eq:CombinedExpression}) to get
\begin{eqnarray}
&& \mathcal{G}^{\varepsilon}\bigl[U^{\delta,\eta},U^{\delta}\bigr](t,x)\nonumber\\
 &&\label{Eq:bound_for_moll}\qquad \geq   (1-\eta)\frac{1}{2} \biggl( 1-\frac{\varepsilon}{\delta} \biggr) \beta
_{0}(t,x)+(1-\eta) \bigl[ \rho_{2}^{M,+}(t,x)+
\rho_{2}^{M,-}(t,x) \bigr] \gamma_{2}^{M}(t)\hspace*{-18pt}
\\
\nonumber
&&\qquad\quad{}+(1-\eta)\rho_{1}(t,x)\gamma_{1}+\frac{1}{2}
\bigl( \eta-2\eta ^{2} \bigr) \bigl\llvert \bar{\sigma}DU^{\delta}(t,x)
\bigr\rrvert ^{2}
\end{eqnarray}
for all $x\in [-A,A]$ and $t\in [0,T-t^{\ast}]$.

We will partition the domain according $z\doteq$ $\hat{x}(c/M\bar{\sigma}%
^{2})^{1/2}/2$ and $H\doteq10z$. We consider three cases depending on whether
$x\in [0,z]$, $x\in [ z,H]$ or $x\in [ H,A]$ if $x\geq0$. The
case $x<0$ is symmetric. Before proceeding with the analysis for each of the
cases, we give the definition of exponential negligibility, a concept used
frequently in the rest of the paper.

\begin{definition}
\label{D:ExponentialNegligibility}
A term is called \textit{exponentially negligible} if
it is bounded above in absolute value by a quantity of the form $\varepsilon
ce^{-{d}/{\varepsilon}}$, where $c<\infty$ and $d>0$.
\end{definition}

\begin{lemma}
\label{L:Region_0_y}
Assume that $(t,x)\in [0,T-t^{\ast}]\times
[0,z]$, $\delta\geq\varepsilon$ and $\eta\leq1/2$. Then, up to an
exponentially negligible term,
\[
\mathcal{G}^{\varepsilon}\bigl[U^{\delta,\eta},U^{\delta}\bigr](t,x)
\geq0.
\]
\end{lemma}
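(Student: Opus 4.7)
The plan is to apply the bound (\ref{Eq:bound_for_moll}) and show that in the region $(t,x)\in[0,T-t^{\ast}]\times[0,z]$ all but one term on the right-hand side are genuinely nonnegative, while the remaining term is exponentially negligible.

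First I would verify nonnegativity of three of the four terms in (\ref{Eq:bound_for_moll}). The $\beta_0(t,x)$ contribution is nonnegative by Jensen's inequality (as already observed), so the combination $(1-\eta)\tfrac{1}{2}(1-\varepsilon/\delta)\beta_0(t,x)$ is nonnegative once we use $\delta\geq\varepsilon$. Next, $\gamma_2^M(t)=\varepsilon\bar{\sigma}^2 a^M(t)>0$ because $a^M(t)>0$ by (\ref{Eq:FM2_function}) (this uses $t\leq T-t^{\ast}$ together with the lower bound (\ref{Eq:bound_on_tstar}) on $t^{\ast}$, which keeps the denominator in $a^M$ bounded away from zero), so the second term is nonnegative. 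Finally, since $\eta\leq 1/2$ we have $\eta-2\eta^2\geq 0$, making $\tfrac{1}{2}(\eta-2\eta^2)|\bar{\sigma}DU^{\delta}(t,x)|^2$ nonnegative.

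The only term that can be negative is $(1-\eta)\rho_1(t,x;\delta)\gamma_1=-(1-\eta)\varepsilon c\,\rho_1(t,x;\delta)$, since $\gamma_1=-\varepsilon c<0$. The heart of the proof is to show that the mollification weight $\rho_1$ is exponentially small in this region. From the definition of $\rho_1$ one has the elementary bound
\[
\rho_1(t,x;\delta)\leq \exp\!\left(-\tfrac{1}{\delta}\bigl[F_1(x)-F_{2,+}^M(t,x)\bigr]\right),
\]
so it suffices to produce a constant $d>0$ with $F_1(x)-F_{2,+}^M(t,x)\geq d$ uniformly in $(t,x)\in[0,T-t^{\ast}]\times[0,z]$. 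Using the analysis of the roots of $F_{2,+}^M(t,\cdot)=F_1(\cdot)$ already done in the excerpt, the smaller root lies in $(-\infty,0]$ (this is exactly what the choice of $t^{\ast}$ enforces), while the larger root $R(t)$ satisfies $R(t)\geq 2z$. Hence for $x\in[0,z]$ the point $x$ lies strictly between the two roots, so $F_1(x)-F_{2,+}^M(t,x)>0$. Because $F_1-F_{2,+}^M$ is a concave quadratic in $x$, its minimum over $[0,z]$ is attained at an endpoint, and I would show that the endpoint value is bounded below by a positive constant independent of $t\in[0,T-t^{\ast}]$, using the uniform lower bound on $R(t)$ (which controls how far inside the zero-set the point $x=z$ sits) and the uniform bounds on the coefficients of $F_{2,+}^M$ coming from (\ref{Eq:bound_on_tstar}). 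With such a $d>0$ in hand, $\rho_1\leq e^{-d/\delta}$, so $(1-\eta)\rho_1\varepsilon c$ is exponentially negligible in the sense of Definition \ref{D:ExponentialNegligibility}, completing the proof.

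The main obstacle is the uniform gap estimate $F_1(x)-F_{2,+}^M(t,x)\geq d$. One has to turn the purely qualitative statement "between the roots" into a quantitative lower bound that survives the supremum over $t\in[0,T-t^{\ast}]$, and this is where the careful choice of $t^{\ast}$ in (\ref{Eq:bound_on_tstar}) and the uniform sandwiching $2z\leq R(t)\leq 8z$ really do their work. Everything else is essentially bookkeeping with the representation (\ref{Eq:bound_for_moll}).
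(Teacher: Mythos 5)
Your proposal is correct and follows essentially the same route as the paper's proof: start from (\ref{Eq:bound_for_moll}), observe that the $\beta_{0}$, $\gamma_{2}^{M}$ and $\left(\eta-2\eta^{2}\right)$ terms are nonnegative under $\delta\geq\varepsilon$ and $\eta\leq1/2$, and kill the remaining $\rho_{1}\gamma_{1}$ term by a uniform positive lower bound on $F_{1}(x)-F_{2,+}^{M}(t,x)$ over $[0,T-t^{\ast}]\times[0,z]$ obtained from concavity in $x$ together with the root location enforced by (\ref{Eq:bound_on_tstar}). The paper carries out the endpoint estimates slightly differently (an explicit computation at $x=0$, minimized over $t$ at $t=T-t^{\ast}$, combined with nonnegativity at $x=2z\leq R(t)$ and concavity), but this is only bookkeeping relative to your plan.
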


\begin{pf}
In this region $F_{1}(x)\geq F_{2,+}^{M}(t,x)$, and we claim that the
inequality is in fact strict. We have
\begin{equation}\label{Eq:F1-F2}
\quad\hspace*{3pt} F_{1}(x)-F_{2,+}^{M}(t,x)=\frac{c}{\bar{\sigma}^{2}}
\bigl[ \hat{x}^{2}
-x^{2} \bigr] -\frac{c}{K-\bar{\sigma}^{2}e^{2c(t-T)}}
\bigl[ xe^{c(t-T)}%
-\hat{x} \bigr] ^{2}.
\end{equation}
For each fixed $t$ this defines a concave function of $x$. At $x=0$ the value
is minimized when $t=T-t^{\ast}$. Using $e^{-2ct^{\ast}}\leq [
2c/M\bar{\sigma}^{2}]^{4}\leq c/M\bar{\sigma}^{2}$ (since $M\geq4c/\bar
{\sigma}^{2}$) and $K=\bar{\sigma}^{2}+2c/M$, we obtain the strictly positive
lower bound $\hat{x}^{2}c [  1/\bar{\sigma}^{2}-1/[\bar{\sigma}%
^{2}+c/M] ]  $. Since $F_{1}(2z)-F_{2,+}^{M}(t,2z)\geq0$, by\vspace*{1pt} concavity
there is $c_{1}>0$ such that (\ref{Eq:F1-F2}) is bounded below by $c_{1}$ for
all $(t,x)\in [0,T-t^{\ast}]\times [0,z]$. Thus the term in
(\ref{Eq:bound_for_moll}) involving the weight $\rho_{1}$ is exponentially
negligible. Since $\beta_{0}(t,x)\geq0$, $\gamma_{2}^{M}(t)>0$ and $\eta
\leq1/2$, all other terms are nonnegative, and the result follows.
\end{pf}

\begin{lemma}
\label{L:Region_hatx_A}
Assume that $(t,x)\in [0,T-t^{\ast}]\times [
H,A]$, $\delta\geq\varepsilon$ and $\eta\leq1/4$. Then letting $\varepsilon
_{0}\doteq cH^{2}/3\bar{\sigma}^{2}$, we have that for all $\varepsilon
\in(0,\varepsilon_{0})$ and with any $\eta\in [\varepsilon/(\varepsilon
+cH^{2}/\bar{\sigma}^{2}),1/4]$, up to an exponentially negligible term,
\begin{equation}\label{Eq:Bound2_b}
\mathcal{G}^{\varepsilon}\bigl[U^{\delta,\eta},U^{\delta}\bigr](t,x)
\geq0.
\end{equation}
\end{lemma}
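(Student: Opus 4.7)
The plan is to apply the non-asymptotic lower bound (\ref{Eq:bound_for_moll}) and exploit that on the outer region $x\in[H,A]$ the mollification $U^\delta$ is dominated, modulo exponentially small corrections, by the quasipotential piece $F_1$. The weight $\rho_1$ will be close to $1$, and the needed non-negativity will come from balancing the $(\eta-2\eta^2)|\bar\sigma DU^\delta|^2$ term against the single negative contribution $(1-\eta)\rho_1\gamma_1 = -(1-\eta)\rho_1\varepsilon c$.

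First I would establish a uniform positive gap
\[
F_{2,+}^M(t,x)-F_1(x)\;\ge\; c_2 \quad\text{and}\quad F_{2,-}^M(t,x)-F_1(x)\;\ge\; c_2
\]
for all $(t,x)\in[0,T-t^*]\times[H,A]$, with $c_2>0$ independent of $T$. The function $x\mapsto F_{2,+}^M(t,x)-F_1(x)$ is strictly convex (convex plus $-F_1$ concave), and its larger zero $R(t)$ satisfies $R(t)\le 8z<10z=H$ by the bound preceding the lemma. Hence the difference is strictly positive at $x=H$ and strictly increasing on $[H,A]$. Uniformity in $t$ follows from $a^M(t)(\hat x e^{-c(t-T)})^2\to c\hat x^2/K$ as $T-t\to\infty$: $F_{2,+}^M(t,\cdot)$ converges to the finite continuous limit $c\hat x^2/K + F_1(\hat x)$, which strictly exceeds $F_1(H)$ (an elementary computation using $H=10z$), so continuity plus the limiting value gives the uniform $c_2$. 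The second inequality reduces to the first via
\[
F_{2,-}^M(t,x)-F_{2,+}^M(t,x)\;=\;4\,a^M(t)\,x\,\hat x\, e^{-c(t-T)}\;\ge\;0 \qquad (x\ge 0).
\]

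Second, the gap immediately gives $\rho_2^{M,\pm}(t,x;\delta)\le e^{-c_2/\delta}$ and $\rho_1(t,x;\delta)\ge 1-2e^{-c_2/\delta}$, both exponentially negligible under $\delta\ge\varepsilon$. Since $DU^\delta=\rho_1 DF_1+\rho_2^{M,+}DF_{2,+}^M+\rho_2^{M,-}DF_{2,-}^M$ and all $DF_i$ are uniformly bounded on $[-A,A]$,
\[
|\bar\sigma DU^\delta(t,x)|^2\;\ge\;|\bar\sigma DF_1(x)|^2 - (\text{exp.\ neg.})\;=\;\frac{4c^2x^2}{\bar\sigma^2}-(\text{exp.\ neg.})\;\ge\;\frac{4c^2H^2}{\bar\sigma^2}-(\text{exp.\ neg.}).
\]
Substituting into (\ref{Eq:bound_for_moll}) and dropping $\beta_0(t,x)\ge 0$ (Jensen) and $(1-\eta)[\rho_2^{M,+}+\rho_2^{M,-}]\gamma_2^M(t)\ge 0$ (since $\gamma_2^M>0$), and using $\rho_1\le 1$ with $\gamma_1=-\varepsilon c$, one obtains, up to an exponentially negligible term,
\[
\mathcal{G}^{\varepsilon}[U^{\delta,\eta},U^\delta](t,x)\;\ge\; -(1-\eta)\varepsilon c\;+\;2(\eta-2\eta^2)\,\frac{c^2H^2}{\bar\sigma^2}.
\]
Since $\eta\le 1/4$ gives $\eta-2\eta^2\ge\eta/2$, the right-hand side is non-negative provided $\eta\,cH^2/\bar\sigma^2\ge(1-\eta)\varepsilon$, equivalently $\eta\ge\varepsilon/(\varepsilon+cH^2/\bar\sigma^2)$, which is precisely the hypothesis. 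The threshold $\varepsilon<\varepsilon_0=cH^2/(3\bar\sigma^2)$ is calibrated so that this lower bound is at most $1/4$, keeping the admissible interval for $\eta$ non-empty.

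The main obstacle is Step 1, the uniform-in-$T$ strict positivity of the gap $c_2$: because $t$ ranges over $[0,T-t^*]$ for arbitrarily large $T$, one must prevent the gap from collapsing as $T-t\to\infty$, which requires pinning down the asymptotic $a^M(t)(\hat x e^{-c(t-T)})^2\to c\hat x^2/K$ and verifying that the resulting limiting value strictly exceeds $F_1(H)$. Once this uniform gap is in hand, the remainder is routine algebra combined with the structural inequality from Lemma \ref{Lem:mollification}.
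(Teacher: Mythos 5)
Your proposal is correct and follows essentially the same argument as the paper: drop the non-negative $\beta_{0}$ and $\gamma_{2}^{M}$ contributions in (\ref{Eq:bound_for_moll}), show the $\rho_{2}^{M,\pm}$ terms are exponentially negligible via a uniform-in-$T$ gap $F_{2,\pm}^{M}(t,x)-F_{1}(x)\geq c_{2}>0$ on $[0,T-t^{\ast}]\times[H,A]$, and then balance $-(1-\eta)\varepsilon c$ against $\eta c^{2}H^{2}/\bar{\sigma}^{2}$, which yields exactly the stated $\eta$-range and $\varepsilon_{0}$. The only real difference is how the uniform gap is obtained: the paper bounds $DF_{2,+}^{M}(t,x)-DF_{1}(x)$ from below at the larger root by an explicit positive constant, uniformly in $t$ via $t^{\ast}\geq-\frac{2}{c}\log\frac{2c}{M\bar{\sigma}^{2}}$, and then uses convexity to get a quantitative $c_{2}$ of order $z$, whereas you identify the $T-t\to\infty$ limit $F_{2,+}^{M}(t,\cdot)\to c\hat{x}^{2}/K+F_{1}(\hat{x})>F_{1}(H)$ and argue by continuity and compactness, which is also valid but less explicit.
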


\begin{pf}
In this region $F_{2,+}^{M}(t,x)\geq F_{1}(x)$, and it is straightforward that
the terms associated with $F_{2,-}^{M}(t,x)$ are exponentially negligible.
Note that $x\rightarrow F_{2,+}^{M}(t,x)-F_{1}(x)$ is\vspace*{1pt} convex, recall that for
each $t\in [0,T-t^{\ast}]$ the largest value where the two functions
agree is smaller than $8z\leq H$ and that
\begin{equation}\label{Eq:diff_of_deriv}
\hspace*{3pt}DF_{2,+}^{M}(t,x)-DF_{1}(x)=\frac{2c}{\bar{\sigma}^{2}}
\frac{K}{K-\bar{\sigma
}^{2}e^{2c(t-T)}} \biggl( x-\frac{\bar{\sigma}^{2}}{K}\hat{x}e^{c(t-T)} \biggr).
\end{equation}
Inserting the largest root for the given $t$ gives the value
\[
\frac{2c\hat{x}}{K-\bar{\sigma}^{2}e^{2c(t-T)}}\sqrt{\frac{2cK}{M\bar{\sigma
}^{4}}-\frac{2c}{M\bar{\sigma}^{2}}e^{-2ct^{\ast}}}.
\]
A lower bound on the first term is $2c\hat{x}/K$. Using $e^{-ct^{\ast}}
\leq [2c/M\bar{\sigma}^{2}]^{2}$, the definition of $K$ and
$4c/M\bar{\sigma}^{2}\leq1$ to bound the second term from below produces the
strictly positive lower bound
\[
DF_{2,+}^{M}(t,x)-DF_{1}(x)\geq
\frac{2c\hat{x}}{K}\sqrt{\frac{2c}{M\bar
{\sigma}^{2}} \biggl( \frac{c}{M}+\bar{
\sigma}^{2} \biggr) }
\]
for all $t\in [0,T-t^{\ast}]$ and $x\geq8z$. Since $H=10z>8z$, this shows
that there is \mbox{$c_{2}>0$} such that $F_{2,+}^{M}(t,x)-F_{1}(x)\geq c_{2}$ for
all $(t,x)\in [0,T-t^{\ast}]\times [ H,A]$. It follows that terms
involving $\rho_{2}^{M,\pm}(t,x)$ are exponentially negligible. Since
$\beta_{0}(t,x)\geq0$ and $\rho_{1}(t,x)=1$ up to an exponentially negligible
term,
\begin{eqnarray*}
\mathcal{G}^{\varepsilon}\bigl[U^{\delta,\eta},U^{\delta}\bigr](t,x) &
\geq  & (1-\eta)\rho_{1}(t,x)\gamma^{0}+\frac{1}{2}
\bigl( \eta-2\eta^{2} \bigr) \bar{\sigma}^{2}\bigl\llvert
\rho_{1}(t,x)DF_{1}(x)\bigr\rrvert ^{2}
\\
& \geq & -(1-\eta)\varepsilon c+2 \bigl( \eta-2\eta^{2} \bigr)
\frac{c^{2}%
}{\bar{\sigma}^{2}}x^{2}
\end{eqnarray*}
up to an exponentially negligible term. Choosing $\eta\leq1/4$ gives
\[
\mathcal{G}^{\varepsilon}\bigl[U^{\delta,\eta},U^{\delta}\bigr](t,x)
\geq-(1-\eta )\varepsilon c+\eta\frac{c^{2}}{\bar{\sigma}^{2}}x^{2},
\]
and for $\varepsilon$ small enough such that $\eta\in [\varepsilon
/(\varepsilon+cH^{2}/\bar{\sigma}^{2}),1/4]$, the last display is
nonnegative. For this interval to be nonempty imposes the constraint
$\varepsilon\leq\varepsilon_{0}\doteq cH^{2}/3\bar{\sigma}^{2}$. Hence in this
region and up to an exponentially negligible term,
\[
\mathcal{G}^{\varepsilon}\bigl[U^{\delta,\eta},U^{\delta}\bigr](t,x)
\geq0.
\]
\upqed\end{pf}

The final region is the most difficult, since $F_{1}(x)-F_{2,+}^{M}(t,x)$ can
be either positive or negative.

\begin{lemma}
\label{L:Region_y_hatx} Assume that $(t,x)\in [0,T-t^{\ast}]\times [
z,H]$, $\eta\leq1/4$, and set $\delta=2\varepsilon$. Then up to an
exponentially negligible term,
\[
\mathcal{G}^{\varepsilon}\bigl[U^{\delta,\eta},U^{\delta}\bigr](t,x)\geq
\frac{1}%
{2} \biggl[ \frac{c^{2}\eta}{2\bar{\sigma}^{2}} \bigl( z-\hat{x}e^{c(t-T)}%
 \bigr) ^{2}-2\varepsilon c \biggr] \wedge0. \label{Eq:Bound2_a}%
\]
\end{lemma}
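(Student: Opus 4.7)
The plan is to start from the inequality (\ref{Eq:bound_for_moll}) and set $\delta = 2\varepsilon$, so that the factor $(1-\varepsilon/\delta)$ becomes $1/2$. The right side is then the sum of the nonnegative Jensen-type term $\tfrac{1-\eta}{4}\beta_0(t,x)$, the nonnegative weighted residual $(1-\eta)[\rho_2^{M,+}+\rho_2^{M,-}]\gamma_2^M(t)$, the single negative term $(1-\eta)\rho_1(t,x)\gamma_1 = -(1-\eta)\rho_1(t,x)\varepsilon c$, and the nonnegative penalty $\tfrac{\eta - 2\eta^2}{2}|\bar{\sigma}DU^\delta(t,x)|^2$ (nonnegative since $\eta \le 1/4$). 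In contrast to Lemmas \ref{L:Region_0_y} and \ref{L:Region_hatx_A}, neither $F_{2,+}^M(t,x)$ nor $F_1(x)$ uniformly dominates across $[z, H]$ (they cross at $R(t) \in [2z, 8z]$), so neither weight $\rho_1$ nor $\rho_2^{M,+}$ is uniformly exponentially negligible, and the best available bound is negative of magnitude at most $\varepsilon c$.

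The algebraic heart of the argument is the explicit gradient-difference identity
\[
DF_{2,+}^M(t,x) - DF_1(x) = \frac{2c}{K - \bar{\sigma}^2 e^{2c(t-T)}}\left[(x - \hat{x}e^{c(t-T)}) + \frac{2cx}{M\bar{\sigma}^2}\right],
\]
obtained by combining the definitions of $DF_{2,+}^M$ and $DF_1$ with $K = \bar{\sigma}^2 + 2c/M$. This is the source of the factor $(x - \hat{x}e^{c(t-T)})$, and ultimately $(z - \hat{x}e^{c(t-T)})^2$, appearing in the claimed bound. I would then partition $[z, H]$ into three subregions according to the sign of $F_1(x) - F_{2,+}^M(t,x)$: in the two where this sign is bounded away from zero, one of the weights is exponentially small and the remaining nonnegative terms either dominate the loss outright (yielding $\mathcal{G}^\varepsilon[U^{\delta,\eta}, U^\delta] \geq 0$ up to exponential negligibility) or, in the sub-region where $\rho_1\approx 1$, the penalty $\tfrac{\eta - 2\eta^2}{2}|\bar{\sigma}DU^\delta|^2 \asymp \eta c^2 x^2/\bar{\sigma}^2$ absorbs the loss. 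In the transition sub-region near the crossover $R(t)$, both $\rho_1$ and $\rho_2^{M,+}$ are $O(1)$, and one uses the Jensen piece $\beta_0 = \bar{\sigma}^2 \rho_1 \rho_2^{M,+}|DF_{2,+}^M - DF_1|^2$ (plus the analogous term for $F_{2,-}^M$) together with the upper bound $K - \bar{\sigma}^2 e^{2c(t-T)}\le K\le 2\bar{\sigma}^2$ and $x\ge z$ to produce the positive contribution $\tfrac{c^2\eta}{2\bar{\sigma}^2}(z - \hat{x}e^{c(t-T)})^2$. Subtracting the loss of size at most $\varepsilon c$ and taking the minimum with $0$ then yields the displayed inequality.

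The main obstacle is carrying out the transition-region analysis uniformly in $t \in [0, T-t^*]$. A $t$-independent lower bound such as $(x - \hat{x}e^{c(t-T)})^2 \ge z^2$ would fail, since for moderate $T-t$ the quantity $\hat{x}e^{c(t-T)}$ can exceed $2z$; it is only for $T-t$ large that $\hat{x}e^{c(t-T)}\to 0$. Retaining the sharp $t$-dependent factor $(z - \hat{x}e^{c(t-T)})^2$ requires keeping the full dependence through the denominator $K - \bar{\sigma}^2 e^{2c(t-T)}$ and ensuring the sharpness of the positive contribution is not lost when the Jensen piece, the penalty, and the weighted $\gamma_2^M$ are combined. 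The constraint $M \ge 4c/\bar{\sigma}^2$ together with (\ref{Eq:bound_on_tstar}) on $t^*$ is used to keep this denominator bounded and to control the root $R(t)$.
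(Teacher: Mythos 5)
Your overall route \textemdash{} start from (\ref{Eq:bound_for_moll}) with $\delta=2\varepsilon$, use the gradient-difference identity (your formula is equivalent to (\ref{Eq:diff_of_deriv})), and split $[z,H]$ according to which of $F_1$ and $F_{2,+}^{M}$ is smaller \textemdash{} is the same as the paper's, but your handling of the middle of the split has a genuine gap. Your three regimes do not cover the region: exponential negligibility of a weight (in the sense of Definition \ref{D:ExponentialNegligibility}) requires $|F_1-F_{2,+}^{M}|$ bounded below by a fixed constant, while with $\delta=2\varepsilon$ the weights are both of order one only where $|F_1-F_{2,+}^{M}|=O(\varepsilon)$. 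In the band just above the crossing $R(t)$, where $0<F_{2,+}^{M}-F_1$ is small but not $O(\varepsilon)$, one has $\rho_1>1/2$ with $\rho_2^{M,+}$ neither exponentially negligible nor bounded below; there the loss $-(1-\eta)\rho_1\varepsilon c$ is of full size $\varepsilon c$, the Jensen factor $\rho_1\rho_2^{M,+}$ can be far too small for $\beta_0$ to supply the needed positive term (nothing in the lemma's hypotheses relates $z^2$ to $\varepsilon$), so your transition-band mechanism fails, and your penalty-based fallback is only argued for $\rho_1\approx1$, where $DU^{\delta}\approx DF_1$. The paper instead dichotomizes at $\rho_1=1/2$: for $\rho_1\le1/2$ the Jensen gain $\beta_0\ge\tfrac12\bar\sigma^2\rho_1(DF_1-DF_{2,+}^{M})^2$ and the loss carry the \emph{common} factor $\rho_1$, so no lower bound on the weights is ever needed \textemdash{} this is the ingredient your argument is missing on that side of the crossing.

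On the side $\rho_1>1/2$ with $\rho_2^{M,+}$ of order one, the penalty term involves the convex combination $|\rho_1DF_1+\rho_2^{M,+}DF_{2,+}^{M}|^2$, and to rule out cancellation one needs the sign facts the paper proves on $[0,T-t^{\ast}]\times[z,H]$: $DF_{2,+}^{M}(t,x)\le0$ (since $x\le H\le\hat xe^{ct^{\ast}}\le\hat xe^{-c(t-T)}$, using (\ref{Eq:bound_on_tstar})) and $DF_1(x)-DF_{2,+}^{M}(t,x)\le0$ (from (\ref{Eq:diff_of_deriv}), $x\ge z$ and $e^{-ct^{\ast}}\le(2c/M\bar\sigma^2)^2$). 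With both gradients nonpositive the cross term is favorable and the penalty is bounded below by $\tfrac{\eta}{16}\bar\sigma^2(DF_1-DF_{2,+}^{M})^2$, which is exactly how (\ref{Eq:CaseI}) and hence the stated bound are obtained; your proposal never establishes or even mentions these sign facts, so the penalty estimate is unjustified precisely where it is needed. A smaller inaccuracy pointing to the same conflation: the factor $\eta$ in the claimed bound comes from the penalty term, not from the Jensen term, whose coefficient after setting $\delta=2\varepsilon$ is $(1-\eta)/4$; attributing the contribution $\tfrac{c^2\eta}{2\bar\sigma^2}(z-\hat xe^{c(t-T)})^2$ to $\beta_0$ is harmless for a lower bound but reflects the missing case analysis.
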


\begin{pf}
While terms corresponding to $F_{2,-}^{M}(t,x)$ are exponentially negligible
in this region, since
\[
F_{1}(x)-F_{2,+}^{M}(t,x)
\]
changes sign, both $\rho_{2}^{M,+}$ and $\rho_{1}$ may be important. Since they
are negligible we omit terms corresponding to $F_{2,-}^{M}(t,x)$.

By (\ref{Eq:bound_for_moll}) we have up to an exponentially negligible term
\begin{eqnarray}
&& \quad\mathcal{G}^{\varepsilon}\bigl[U^{\delta,\eta},U^{\delta}\bigr](t,x)\nonumber\\
 && \quad\label{Eq:BoundForGreyArea1}\qquad \geq
 (1-\eta)\tfrac{1}{4}\beta_{0}(t,x)+(1-\eta)
\rho_{2}^{M,+}(t,x)\gamma_{2}%
^{M}(t)+(1-
\eta)\rho_{1}(t,x)\gamma_{1}\hspace*{-25pt}
\\
&&\qquad\qquad{}+\tfrac{1}{2} \bigl( \eta-2\eta^{2} \bigr) \bar{
\sigma}^{2}\bigl\llvert \rho_{2}^{M,+}(t,x)DF_{2}^{M}(t,x)+
\rho_{1}(t,x)DF_{1}(x)\bigr\rrvert ^{2}.\nonumber
\end{eqnarray}
As noted previously $\beta_{0}(t,x)\geq0$ for all $(t,x)$. However, we will
exploit the fact that $\beta_{0}(t,x)=0$ only for points $(t,x)$ such that
$DF_{1}(x)=DF_{2}^{M}(t,x)$. We distinguish two cases depending on whether
$\rho_{1}(t,x)>1/2$ or $\rho_{1}(t,x)\leq1/2$.

\begin{longlist}[Case II:]
\item[\textit{Case I}:] $\rho_{1}(t,x)>1/2$. We know that $\beta_{0}(t,x)\geq0$ and
$\gamma_{2}^{M}(t)\geq0$ and can ignore those terms. Using $\rho_{2}%
^{M,+}+\rho_{1}=1$, the terms that remain are
\begin{eqnarray*}
&& (1-\eta)\rho_{1}(t,x)\gamma_{1}
\\
&& \quad{}+\tfrac{1}{2} \bigl(
\eta-2\eta^{2} \bigr) \bar{\sigma}^{2} \bigl[ \rho_{1}(t,x)^{2}%
\bigl|DF_{1}(x)-DF_{2,+}^{M}(t,x)\bigr|^{2}
\\
&& \hspace*{45pt}\qquad\qquad{}+2\rho_{1}(t,x)DF_{2,+}^{M}(t,x)
\bigl(DF_{1}%
(x)-DF_{2,+}^{M}(t,x)\bigr)
\\
&&\hspace*{183pt}\qquad\qquad{}+\bigl|DF_{2,+}^{M}(t,x)\bigr|^{2} \bigr].
\end{eqnarray*}
We claim that for $(t,x)\in [0,T-t^{\ast}]\times [ z,H]$,
\[
DF_{2,+}^{M}(t,x) \bigl(DF_{1}(x)-DF_{2,+}^{M}(t,x)
\bigr)\geq0.
\]
First, we note that $e^{-ct^{\ast}}\leq(2c/M\bar{\sigma}^{2})^{1/2}%
=(4c/M\bar{\sigma}^{2})^{1/2}/4\leq1/4$, and thus $\hat{x}e^{ct^{\ast}}%
\geq4\hat{x}\geq H$. Therefore
\begin{eqnarray*}
DF_{2,+}^{M}(t,x)  &=& \frac{2ce^{2c(t-T)}}{K-\bar{\sigma}^{2}e^{2c(t-T)}%
} \bigl( x-
\hat{x}e^{-c(t-T)} \bigr) \\
&\leq & \frac{2ce^{2c(t-T)}}{K-\bar{\sigma}^{2}e^{2c(t-T)}} \bigl( H-\hat
{x}e^{ct^{\ast}} \bigr) \leq0.
\end{eqnarray*}

Second, by (\ref{Eq:diff_of_deriv}), the definition of $z$ and $e^{-ct^{\ast}%
}\leq(2c/M\bar{\sigma}^{2})^{2}$, we also have
\begin{eqnarray*}
DF_{1}(x)-DF_{2,+}^{M}(t,x) & =&
\frac{2c}{\bar{\sigma}^{2}}\frac{K}
{K-\bar{\sigma}^{2}e^{2c(t-T)}} \biggl( \frac{\bar{\sigma}^{2}}{K}\hat
{x}e^{c(t-T)}-x \biggr)
\\
& \leq & \frac{2c}{\bar{\sigma}^{2}}\frac{K}{K-\bar{\sigma}^{2}e^{2c(t-T)}%
} \biggl( \frac{\bar{\sigma}^{2}}{K}
\hat{x}e^{-ct^{\ast}}-z \biggr)
\\
& \leq & \frac{2c}{\bar{\sigma}^{2}}\frac{K}{K-\bar{\sigma}^{2}e^{2c(t-T)}%
} \biggl( \hat{x} \biggl(
\frac{2c}{M\bar{\sigma}^{2}} \biggr) ^{2}-\frac
{\hat{x}}{2} \biggl(
\frac{c}{M\bar{\sigma}^{2}} \biggr) ^{1/2} \biggr)
\\
& \leq & 0,
\end{eqnarray*}
where the last inequality uses $4c/M\bar{\sigma}^{2}\leq1$. We conclude that
$DF_{2}^{M}(t,x) \times\break (DF_{1}(x)-DF_{2}^{M}(t,x))\geq0$. Since $\rho_{1}%
(t,x)\in(1/2,1)$ and $\eta\leq1/4$, we obtain the bound
\begin{equation}\label{Eq:CaseI}
\hspace*{10pt}\quad\mathcal{G}^{\varepsilon}\bigl[U^{\delta,\eta},U^{\delta}\bigr](t,x)
\geq-(1-\eta )\varepsilon c+\tfrac{1}{16}\eta\bar{\sigma}^{2}\bigl|DF_{1}(x)-DF_{2,+}
^{M}(t,x)\bigr|^{2}.
\end{equation}
This gives a bound for Case~I.

\item[\textit{Case II}:]  $\rho_{1}(t,x)\leq1/2$. Here we will have to use $\beta
_{0}(t,x)$. Dropping other terms on the right that are not possibly negative,
we obtain from (\ref{Eq:BoundForGreyArea1}) that
\[
\mathcal{G}^{\varepsilon}\bigl[U^{\delta,\eta},U^{\delta}\bigr](t,x)
\geq(1-\eta)\tfrac{1}{4}\beta_{0}(t,x)+(1-\eta)
\rho_{1}(t,x)\gamma_{1}.
\]
Omitting exponentially negligible terms, we note\vspace*{-1pt} that
\begin{eqnarray*}
\beta_{0}(t,x) & =& \bar{\sigma}^{2} \bigl[ (1-
\rho_{1})\bigl\llvert DF_{2,+}^{M}\bigr\rrvert
^{2}+\rho_{1}\llvert DF_{1}\rrvert
^{2}\\[-2pt]
&&\hspace*{6pt}\quad{}-\bigl\llvert DF_{2,+}^{M}+\rho_{1}
\bigl(DF_{1}-DF_{2,+}^{M}\bigr)\bigr\rrvert
^{2} \bigr] (t,x)
\\[-2pt]
& =& \bar{\sigma}^{2}\rho_{1} \bigl[ \llvert
DF_{1}\rrvert ^{2}-\bigl\llvert DF_{2,+}^{M}
\bigr\rrvert ^{2}-2DF_{2,+}^{M}
\bigl(DF_{1}-DF_{2,+}^{M}\bigr)\\
&&\hspace*{111pt}\qquad{}-
\rho_{1}\bigl(DF_{1}-DF_{2,+}^{M}
\bigr)^{2} \bigr] (t,x)
\\[-2pt]
& =& \bar{\sigma}^{2}\rho_{1}\bigl(DF_{1}-DF_{2,+}^{M}
\bigr) \bigl[ DF_{1}+DF_{2,+}%
^{M}-DF_{2,+}^{M}
\\[-2pt]
&&\hspace*{88pt}\qquad{}
-\rho_{1}\bigl(DF_{1}-DF_{2,+}^{M}\bigr)
\bigr] (t,x)
\\[-2pt]
& =& \bar{\sigma}^{2}\rho_{1}(1-\rho_{1})
\bigl(DF_{1}-DF_{2,+}^{M}\bigr)^{2}(t,x)
\\[-2pt]
& \geq & \tfrac{1}{2}\bar{\sigma}^{2}\rho_{1}
\bigl(DF_{1}-DF_{2,+}^{M}\bigr)^{2}(t,x),
\end{eqnarray*}
where the last inequality uses $\rho_{1}(t,x)\leq1/2$, and so\vspace*{-2pt} obtain
\[
\mathcal{G}^{\varepsilon}\bigl[U^{\delta,\eta},U^{\delta}\bigr](t,x)
\geq(1-\eta)\rho _{1} \bigl[ \tfrac{1}{8}\bar{
\sigma}^{2}\bigl|DF_{1}(x)-DF_{2,+}^{M}(t,x)\bigr|^{2}
-
\varepsilon c \bigr].
\]
This gives a bound for Case II.

Straightforward estimation gives, for all $(t,x)\in [0,T-t^{\ast}]\times [ z,H]$, the lower bound\vspace*{-3pt}
\begin{eqnarray*}
DF_{2,+}^{M}(t,x)-DF_{1}(x)  &=&
\frac{2c}{\bar{\sigma}^{2}}\frac{K}{K-\bar{\sigma}^{2}e^{2c(t-T)}} \biggl( x-\frac{\bar{\sigma}^{2}}{K}\hat
{x}e^{c(t-T)} \biggr)\\
&\geq & \frac{2c}{\bar{\sigma}^{2}} \bigl( z-\hat{x}e^{c(t-T)}
\bigr).
\end{eqnarray*}
Using this bound and $\eta\leq1/4$, we get a lower bound from (\ref{Eq:CaseI})
in the form
\begin{eqnarray*}
\mathcal{G}^{\varepsilon}\bigl[U^{\delta,\eta},U^{\delta}\bigr](t,x) &
\geq&  \biggl[ \frac{1}{16}\eta\bar{\sigma}^{2}\bigl|DF_{1}(x)-DF_{2,+}^{M}(t,x)\bigr|^{2}-
\varepsilon c \biggr]
\\[-2pt]
& \geq &  \biggl[ \frac{c^{2}\eta}{4\bar{\sigma}^{2}} \bigl( z-\hat{x}%
e^{c(t-T)}
\bigr) ^{2}-\varepsilon c \biggr].
\end{eqnarray*}
Since this is less than the bound for Case II when both terms are negative,
the conclusion of the lemma follows.\quad\qed
\end{longlist}
\noqed\end{pf}

\begin{theorem}
\label{T:MainBoundLinear}
Assume $\delta=2\varepsilon,\eta\in(\varepsilon/(\varepsilon+cH^{2}/\bar{\sigma}^{2}),1/4)$ and that $z^{2}c\eta
\geq8\varepsilon\bar{\sigma}^{2}$. Let $\bar{u}$ be the control based on the
function $\bar{U}^{\delta}$ defined in (\ref{Eq:ControlDesignLinear}), that is,
$\bar{u}(t,x)=-\bar{\sigma}D\bar{U}^{\delta}(t,x)$. Then up to an
exponentially negligible term, we have
\[
-\varepsilon\log Q^{\varepsilon}(0,0;\bar{u})\geq2I_{1}(\varepsilon,
\eta,T,\hat{x},M)1_{ \{  T\geq t^{\ast} \}  }+2I_{2}(\varepsilon
,T)1_{ \{  T<t^{\ast} \}},
\]
where\vspace*{-2pt}
\begin{eqnarray*}
I_{1}(\varepsilon,\eta,T,\hat{x},M)  &=& (1-\eta)\bar{U}^{\delta}(0,0)+
\biggl( \log \biggl[ \frac{1}{\hat{x}} \bigl( z-\sqrt{4\varepsilon\bar{
\sigma}
^{2}/c\eta} \bigr) \biggr] \wedge0 \biggr) \varepsilon,
\\[-2pt]
I_{2}(\varepsilon,T) &=& 2L-cT\varepsilon
\end{eqnarray*}
and $L=\frac{1}{2}\frac{c}{\bar{\sigma}^{2}}A^{2}$.
\end{theorem}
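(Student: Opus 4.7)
My plan is to apply the non-asymptotic game representation (\ref{Eq:LBQ}), specialized to $\bar{U}=\bar{U}^{\delta}$ and $\bar{W}=\bar{U}^{\delta,\eta}$. Rewriting the two integrals via the identity $2\mathcal{G}^{\varepsilon}[W]-|\bar{\sigma}(DW-DU)|^{2}=2\mathcal{G}^{\varepsilon}[W,U]$ from (\ref{Eq:Ge_2}) gives
\[
-\varepsilon\log Q^{\varepsilon}(0,0;\bar{u})\;\geq\;2\bar{U}^{\delta,\eta}(0,0)\;+\;\inf_{v}\mathbb{E}\!\int_{0}^{\hat{\tau}^{\varepsilon}}\!2\mathcal{G}^{\varepsilon}[\bar{U}^{\delta,\eta},\bar{U}^{\delta}](s,\hat{X}^{\varepsilon}(s))\,ds,
\]
so the proof reduces to evaluating the initial-value term and establishing a trajectory-independent pointwise lower bound on the integrand.

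When $T<t^{\ast}$, the piecewise construction (\ref{Eq:ControlDesignLinear}) collapses to $\bar{U}^{\delta}=\bar{U}^{\delta,\eta}=F_{1}$ throughout $[0,T]$, so the integrand is $\mathcal{G}^{\varepsilon}[F_{1},F_{1}]=\mathcal{G}^{\varepsilon}[F_{1}]=-\varepsilon c$ and $\bar{U}^{\delta,\eta}(0,0)=F_{1}(0)=2L$. Since $\hat{\tau}^{\varepsilon}\leq T$, the running cost loses at most $2\varepsilon cT$, yielding $-\varepsilon\log Q^{\varepsilon}\geq 4L-2\varepsilon cT=2I_{2}$. When $T\geq t^{\ast}$, the condition $0\leq T-t^{\ast}$ and (\ref{Eq:ControlAnalysisLinear}) give $\bar{U}^{\delta,\eta}(0,0)=(1-\eta)U^{\delta}(0,0)=(1-\eta)\bar{U}^{\delta}(0,0)$, producing the first summand of $2I_{1}$. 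I then split $[0,T]=[0,T-t^{\ast}]\cup[T-t^{\ast},T]$: on the tail $\bar{U}^{\delta,\eta}=\bar{U}^{\delta}=F_{1}$ gives integrand $-2\varepsilon c$ and contribution $-2\varepsilon ct^{\ast}$, while on $[0,T-t^{\ast}]$ I partition the state space by $|x|\in[0,z]\cup[z,H]\cup[H,A]$ and invoke Lemmas \ref{L:Region_0_y}, \ref{L:Region_y_hatx}, \ref{L:Region_hatx_A} (and their mirror images for $x<0$). The parameter hypotheses $\delta=2\varepsilon$, $\eta\in(\varepsilon/(\varepsilon+cH^{2}/\bar{\sigma}^{2}),1/4)$, and $z^{2}c\eta\geq 8\varepsilon\bar{\sigma}^{2}$ are exactly those demanded by the three lemmas. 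The outer regions contribute non-negatively up to exponentially negligible errors, so the genuine loss comes from the middle strip via the trajectory-independent bound $\psi(s):=\tfrac{1}{2}\bigl[\tfrac{c^{2}\eta}{2\bar{\sigma}^{2}}(z-\hat{x}e^{c(s-T)})^{2}-2\varepsilon c\bigr]\wedge 0$.

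The main technical step is to extract the stated $\varepsilon(\log[(z-\sigma^{\ast})/\hat{x}]\wedge 0)$ loss by integrating $\psi$ and merging with the quasipotential tail, where $\sigma^{\ast}:=\sqrt{4\varepsilon\bar{\sigma}^{2}/(c\eta)}$ and the hypothesis $z^{2}c\eta\geq 8\varepsilon\bar{\sigma}^{2}$ ensures $z-\sigma^{\ast}>0$. Since $\psi(s)=0$ whenever $|z-\hat{x}e^{c(s-T)}|\geq\sigma^{\ast}$ and $\psi(s)\geq -\varepsilon c$ always, the convenient bound $\psi(s)\geq -\varepsilon c\cdot 1_{\{\hat{x}e^{c(s-T)}\geq z-\sigma^{\ast}\}}$ identifies the bad set for the middle strip as $[T-\tfrac{1}{c}\log(\hat{x}/(z-\sigma^{\ast})),T-t^{\ast}]$ when non-empty. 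Concatenating this with the tail $[T-t^{\ast},T]$ yields a single window $[T-\tfrac{1}{c}\log(\hat{x}/(z-\sigma^{\ast})),T]$ along which the integrand is uniformly bounded below by $-2\varepsilon c$, so the $\pm 2\varepsilon ct^{\ast}$ terms cancel and the total running-cost loss collapses to $-2\varepsilon\log(\hat{x}/(z-\sigma^{\ast}))$, matching $2I_{1}$; the $\wedge 0$ in $I_{1}$ absorbs the degenerate case $\hat{x}\leq z-\sigma^{\ast}$. The delicate point is to balance $t^{\ast}$ (via the lower bound (\ref{Eq:bound_on_tstar})) against the critical time $\tfrac{1}{c}\log(\hat{x}/(z-\sigma^{\ast}))$ so that the two intervals abut correctly; the exponentially negligible residues from the three regional lemmas then accumulate controllably on $[0,T]$ thanks to the uniform-in-$T$ separation constants $c_{1}, c_{2}$ from Lemmas \ref{L:Region_0_y} and \ref{L:Region_hatx_A}, completing the argument.
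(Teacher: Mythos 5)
Your route is the same as the paper's: the game representation, the design/analysis pair $\bar{U}^{\delta}$ from (\ref{Eq:ControlDesignLinear}) and $\bar{U}^{\delta,\eta}$, the spatial partition $[0,z]\cup[z,H]\cup[H,A]$ handled by Lemmas \ref{L:Region_0_y}, \ref{L:Region_y_hatx}, \ref{L:Region_hatx_A}, and the final integration of the middle-strip deficit. The genuine gap is at your very first display. The bound (\ref{Eq:LBQ}) (equivalently Lemma \ref{L:GeneralBound}) is a verification/It\^{o} argument for a smooth $\bar{W}$, whereas $\bar{U}^{\delta,\eta}$ is discontinuous in $t$ at the handoff time $T-t^{\ast}$. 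To get $-\varepsilon\log Q^{\varepsilon}(0,0;\bar{u})\geq 2\bar{U}^{\delta,\eta}(0,0)+\inf_{v}\mathbb{E}\int_{0}^{\hat{\tau}^{\varepsilon}}2\mathcal{G}^{\varepsilon}[\bar{U}^{\delta,\eta},\bar{U}^{\delta}]\,ds$ one must apply It\^{o} separately on $[0,(T-t^{\ast})\wedge\hat{\tau}^{\varepsilon})$ and $[(T-t^{\ast})\wedge\hat{\tau}^{\varepsilon},\hat{\tau}^{\varepsilon})$ and then discard the jump term at the handoff, which is legitimate only because $U^{\delta,\eta}(T-t^{\ast},x)\leq U^{\delta}(T-t^{\ast},x)\leq U^{0}(T-t^{\ast},x)\leq F_{1}(x)$ on $[-A,A]$ — this is exactly what the hypothesis (\ref{Eq:bound_on_tstar}) on $t^{\ast}$ guarantees — together with the boundary inequality $\bar{U}^{\delta,\eta}(t,\pm A)\leq F_{1}(\pm A)\leq 0$ used to drop the exit term; see (\ref{Eq:two_partsA}). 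You instead assign (\ref{Eq:bound_on_tstar}) the role of making the bad time window and the tail $[T-t^{\ast},T]$ ``abut,'' which is not its function, and you never account for the jump at $T-t^{\ast}$; as written, your opening inequality is unjustified for the piecewise-defined $\bar{U}^{\delta,\eta}$.

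A smaller bookkeeping point: your claim that the $\pm 2\varepsilon c t^{\ast}$ terms cancel and the running-cost loss collapses to $-2\varepsilon\log\left(\hat{x}/(z-\sigma^{\ast})\right)$ silently presumes $b\doteq\frac{1}{c}\log\left[\hat{x}/(z-\sigma^{\ast})\right]\geq t^{\ast}$. No abutting of intervals is needed in the paper's accounting: the middle-strip deficit over $[0,T-t^{\ast}]$ is bounded below by $-\left[(b-t^{\ast})\vee 0\right]2\varepsilon c$ and the tail contributes $-2\varepsilon c t^{\ast}$ separately, and your cancellation is just the case $b\geq t^{\ast}$ of this. Your treatment of the case $T<t^{\ast}$ (pure $F_{1}$, yielding $2I_{2}=4L-2\varepsilon cT$) is correct and matches the paper's remark that this case is straightforward.
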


\begin{rem}
Although the bound provided by Theorem \ref{T:MainBoundLinear} takes a
complicated form, it is important to note that it does not degrade as
$T\rightarrow\infty$, and this is also reflected in the simulation data. Also,
as noted in Sections~\ref{S:AnalysisLinearProblem} and \ref{SS:1DimLinearModel_algorithm} there are natural scalings under which $\eta\rightarrow0$
and $M\rightarrow\infty$ as $\varepsilon\rightarrow0$. Using the bound from
below given in Lemma \ref{Lem:mollification} and the explicit form of
$F_{2,\pm}^{M}(0,0)$, we obtain
\[
\bar{U}^{\delta}(0,0)\geq\frac{c}{({2c}/{M})+\bar{\sigma}^{2}-\bar{\sigma
}^{2}e^{-2cT}}\hat{x}^{2}+ \biggl(
2L-\frac{c}{\bar{\sigma}^{2}}\hat{x}
^{2} \biggr) -\delta\log3.
\]
If the natural scalings are used, then various terms vanish as $\varepsilon \rightarrow 0$, and we obtain the rate of decay
\[
2L+\frac{c\hat{x}^{2}}{\bar{\sigma}^{2}} \biggl[ \frac{e^{-2cT}}{1-e^{-2cT}
} \biggr]
\]
uniformly in $T$ as $\varepsilon\rightarrow0$.
\end{rem}
\begin{pf*}{Proof of Theorem \ref{T:MainBoundLinear}}
The starting point is
representation (\ref{Eq:game_rep}) (but rewritten for the more general process
model and with time dependent $u$), which is valid for every $\varepsilon>0$.
We can restrict to $v$ such that $\tau^{\varepsilon}\leq T$ w.p.1, obtaining
\begin{eqnarray}
&&\hspace*{8pt} -\varepsilon\log Q^{\varepsilon}(0,0;\bar{u})
\nonumber
\\[-8pt]
\label{Eq:Rep_with_constraint}
\\[-8pt]
\nonumber
&& \hspace*{8pt}\qquad=\inf_{v\in\mathcal{A}\dvtx \hat{\tau
}^{\varepsilon}\leq T \ \mathrm{w.p.1}}
\mathbb{E}_{0,0} \biggl[ \frac{1}{2}\int_{0}^{\hat{\tau}^{\varepsilon}}v(s)^{2}\,ds-
\int_{0}^{\hat{\tau}^{\varepsilon}%
}\bar{u}\bigl(s,\hat{X}^{\varepsilon}(s)
\bigr)^{2}\,ds \biggr].
\end{eqnarray}
We can also assume $T\geq t^{\ast}$ since the bound is straightforward
otherwise. We recall that under (\ref{Eq:bound_on_tstar}) the subsolution
property is preserved for $\bar{U}^{\delta}(t,x)$ at $T-t^{\ast}$, that is, that
\[
U^{\delta}\bigl(T-t^{\ast},x\bigr)\leq F_{1}(x)\qquad\mbox{for all }x\in [-A,A]. \label{Eq:SubsolutionProperty_tstar}%
\]

Next consider any control in the representation such that $\hat{\tau
}^{\varepsilon}\leq T$ w.p.1. We will apply It\^{o}'s formula separately over
the intervals $[0,(T-t^{\ast})\wedge\hat{\tau}^{\varepsilon})$ and
$[(T-t^{\ast})\wedge\hat{\tau}^{\varepsilon},\hat{\tau}^{\varepsilon})$ and
also use the boundary condition $\bar{U}^{\delta,\eta}(t,\pm A)\leq F_{1}(\pm
A)\leq0$ for $t\in [0,T]$. Since $U^{\delta,\eta}(T-t^{\ast},x)\leq
F_{1}(x)$, we obtain
\begin{eqnarray}
\quad-\bar{U}^{\delta,\eta}(0,0) & \geq & \mathbb{E}_{0,0} \bigl[
\bar{U}%
^{\delta,\eta}\bigl(\hat{\tau}^{\varepsilon},
\hat{X}^{\varepsilon}\bigl(\hat{\tau }^{\varepsilon}\bigr)\bigr)\nonumber\\
&&\label{Eq:two_partsA}
\hspace*{22pt}{}-\bar{U}^{\delta,\eta}\bigl(\bigl(T-t^{\ast}\bigr)\wedge\hat{\tau
}^{\varepsilon}-,\hat{X}^{\varepsilon}\bigl(\bigl(T-t^{\ast}\bigr)\wedge
\hat{\tau }^{\varepsilon}-\bigr)\bigr) \bigr] 1_{ \{  \hat{\tau}^{\varepsilon}\geq T-t^{\ast
} \} }
\\
\nonumber
&&{}+\mathbb{E}_{0,0} \bigl[ \bar{U}^{\delta,\eta}\bigl(
\bigl(T-t^{\ast}\bigr)\wedge \hat{\tau}^{\varepsilon},
\hat{X}^{\varepsilon}\bigl(\bigl(T-t^{\ast}\bigr)\wedge\hat{\tau
}^{\varepsilon}\bigr)\bigr)-\bar{U}^{\delta,\eta}(0,0) \bigr].
\end{eqnarray}

Using Lemma \ref{L:GeneralBound} and recalling the definition of
$\mathcal{G}^{\varepsilon}[W,U]$ in (\ref{Eq:Ge_2}), the contribution from
$[0,(T-t^{\ast})\wedge\hat{\tau}^{\varepsilon})$ gives
\begin{eqnarray*}
&&  \mathbb{E}_{0,0} \bigl[ \bar{U}^{\delta,\eta}\bigl(
\bigl(T-t^{\ast}\bigr)\wedge\hat{\tau }^{\varepsilon},
\hat{X}^{\varepsilon}\bigl(\bigl(T-t^{\ast}\bigr)\wedge\hat{\tau
}^{\varepsilon}\bigr)\bigr)-\bar{U}^{\delta,\eta}(0,0) \bigr]
\\
&&\qquad =\mathbb{E}_{0,0}\int_{0}^{(T-t^{\ast})\wedge\hat{\tau}^{\varepsilon}%
}
\biggl[ \mathcal{G}^{\varepsilon}\bigl[\bar{U}^{\delta,\eta},
\bar{U}^{\delta
}\bigr]\bigl(s,\hat{X}^{\varepsilon}(s)\bigr)\,ds\\
&&\hspace*{75pt}\qquad\quad{}-\frac{1}{4}v(s)^{2}\,ds+\frac{1}{2}\bar {u}\bigl(s,
\hat{X}^{\varepsilon}(s)\bigr)^{2} \biggr] \,ds.
\end{eqnarray*}
An analogous formula holds for $[(T-t^{\ast})\wedge\hat{\tau}^{\varepsilon
},\hat{\tau}^{\varepsilon})$, save that since $\bar{U}^{\delta,\eta}=\bar
{U}^{\delta}=F_{1}$, the term $\mathcal{G}^{\varepsilon}[\bar{U}^{\delta,\eta
},\bar{U}^{\delta}]$ simplifies to $\mathcal{G}^{\varepsilon}[F_{1}]$.
Rearranging and using (\ref{Eq:two_partsA}),
\begin{eqnarray*}
&& \mathbb{E}_{0,0}  \biggl[ \frac{1}{2}\int_{0}^{\hat{\tau}^{\varepsilon}%
}v(s)^{2}\,ds-
\int_{0}^{\hat{\tau}^{\varepsilon}}\bar{u}\bigl(s,\hat{X}^{\varepsilon
}(s)
\bigr)^{2}\,ds \biggr]\\
&& \qquad \geq  2\bar{U}^{\delta,\eta}(0,0)
+\mathbb{E}_{0,0}\int_{0}^{(T-t^{\ast})\wedge\hat{\tau}^{\varepsilon}%
}2
\mathcal{G}^{\varepsilon}\bigl[\bar{U}^{\delta,\eta},\bar{U}^{\delta}\bigr]
\bigl(s,\hat {X}^{\varepsilon}(s)\bigr)\,ds\\
&&\qquad\quad{}+\mathbb{E}_{0,0}\int
_{(T-t^{\ast})\wedge\hat{\tau
}^{\varepsilon}}^{\hat{\tau}^{\varepsilon}}1_{ \{  \hat{\tau}%
^{\varepsilon}\geq T-t^{\ast} \}  }2\mathcal{G}^{\varepsilon}%
[F_{1}]
\bigl(s,\hat{X}^{\varepsilon}(s)\bigr)\,ds.
\end{eqnarray*}
We now replace each term by a lower bound, using Lemmas \ref{L:Region_0_y},
\ref{L:Region_hatx_A} and \ref{L:Region_y_hatx} for $2\mathcal{G}%
^{\varepsilon}[\bar{U}^{\delta,\eta},\bar{U}^{\delta}]$. Since the bounds are
independent of the control process $v$, representation
(\ref{Eq:Rep_with_constraint}) implies
\begin{eqnarray*}
&& -\varepsilon\log Q^{\varepsilon}(0,0;\bar{u})\\
&& \qquad \geq2(1-\eta)\bar{U}^{\delta
}(0,0)+
\int_{J} \biggl[ \frac{c^{2}\eta}{2\bar{\sigma}^{2}} \bigl( z-\hat
{x}e^{c(s-T)} \bigr) ^{2}-2\varepsilon c \biggr]
\,ds-t^{\ast}2\varepsilon c,
\end{eqnarray*}
where $J$ are the times in $[0,T-t^{\ast}]$ where the integrand is negative.

We next use the constraint $z^{2}c\eta\geq8\varepsilon\bar{\sigma}^{2}$, which
guarantees that for $T-s$ sufficiently large, the integrand is in fact
positive. Let
\[
\frac{c^{2}\eta}{2\bar{\sigma}^{2}} \bigl( z-\hat{x}e^{-cb} \bigr) ^{2}-2
\varepsilon c\quad \mbox{or}\quad b=-\frac{1}{c}\log \biggl[ \frac{1}{\hat{x}%
}
\bigl( z-\sqrt{4\varepsilon\bar{\sigma}^{2}/c\eta} \bigr) \biggr].
\]

Then since the integrand is only negative for $s\geq T-b$, we obtain the lower
bound $- [  (b-t^{\ast})\vee0 ]  2\varepsilon c$ for the integral.
Adding the remaining $-t^{\ast}2\varepsilon c$ then gives the result as stated.
\end{pf*}

\subsection{Simulation results for the linear problem}
\label{S:SimulationResultsLinearProblem}
In this subsection we\break present simulation data for the linear problem and make
several comments on the application of the algorithm. For comparison purposes,
we consider the same two-sided problem corresponding to the data from Tables~\ref{Table1a}, \ref{Table1b}, \ref{Table2a} and \ref{Table6b}. Thus we
consider the small noise diffusion process with drift $b(x)=-V^{\prime}(x)$,
where $V(x)=\frac{1}{2}x^{2}$, diffusion coefficient $\sqrt{\varepsilon}$ and
starting from the stable equilibrium point $x=0$. The goal is to estimate the
probability of exiting the set $(-1,1)$ by a given time $T$.

As discussed in Section~\ref{SS:1DimLinearModel_algorithm}, the change of
measure for the importance sampling scheme is based on  subsolution
(\ref{Eq:ControlDesignLinear}). In order to apply it to a given pair
$(\varepsilon,T)$, one needs to choose the parameters $(\hat{x},M,t^{\ast
},\delta)$. Before presenting simulation data, we comment on these choices.

The analysis in Section~\ref{S:AnalysisLinearProblem} assumes $t^{\ast}%
\geq-\frac{2}{c}\log\frac{2c}{M\bar{\sigma}^{2}}$ and $\delta=2\varepsilon$,
and we will take $t^{\ast}=-\frac{2}{c}\log\frac{2c}{M\bar{\sigma}^{2}}$. As
noted before Lemmas \ref{L:Region_0_y}--\ref{L:Region_y_hatx}, it is natural to
allow quantities such as $z$ and $H$, which characterize the region where the
solution to the LQR replaces the subsolution based on the quasipotential, to
depend on $\varepsilon$. One would like the width of this region to scale like
$\varepsilon^{\kappa}$, with $\kappa\in(0,1/2]$, which in turn suggests that
$M$ scales like $2\hat{x}^{2}c/\bar{\sigma}^{2}\varepsilon^{2\kappa}$. However,
the exponential negligibility of certain terms that holds when parameters such
as $z,H$ and $M$ are independent of $\varepsilon$ need not hold when they
depend on $\varepsilon$. For example, the exponential negligibility of the
term $(1-\eta)\rho_{1}(t,x)\gamma_{1}$ appearing in Lemma \ref{L:Region_0_y}
should be examined.

Recall that the exponential rate of decay of terms like $(1-\eta)\rho
_{1}(t,x)\gamma_{1}$ is bounded by the smallest value of $F_{1}(x)-F_{2,+}%
^{M}(t,x)$. A lower bound of the form $\hat{x}^{2}c [  1/\bar{\sigma}%
^{2}-1/[\bar{\sigma}^{2}+c/M] ]$ was obtained in the proof of Lemma
\ref{L:Region_0_y}. Inserting the given scaling and approximating for small
$\varepsilon$ gives $c\varepsilon^{2\kappa}/2\bar{\sigma}^{2}$, and upon
dividing by $\delta=2\varepsilon$ gives the exponent $c\varepsilon^{2\kappa
-1}/4\bar{\sigma}^{2}$. Hence exponential negligibility requires $\kappa
\in(0,1/2)$, with smaller values of $\kappa$ giving a faster rate of decay.
Note, however, that the analysis assumes $M\geq4c/\bar{\sigma}^{2}$ and
$z^{2}c\eta\geq8\varepsilon\bar{\sigma}^{2}$. With regard to the condition
$M\geq4c/\bar{\sigma}^{2}$, inserting the given scaling for $M$ we get the
constraint $\hat{x}^{2}/2\varepsilon^{2\kappa}\geq1$. This is clearly
satisfied for small $\varepsilon>0$ if $\hat{x}$ is of order $1$. One may also
take here $\hat{x}$ to be of order $\varepsilon^{\lambda}$, and the constraint
will be satisfied for small $\varepsilon$ if $\lambda<\kappa$. We also remark
here that for the nonlinear problem, the condition $M\geq4c/\bar{\sigma}^{2}$
needs to be strengthened to $M>4c/\bar{\sigma}^{2}$. With regard to the
condition $z^{2}c\eta\geq8\varepsilon\bar{\sigma}^{2}$, inserting the given
scaling for $M$ and recalling the definition $z\doteq\hat{x}(c/M\bar{\sigma
}^{2})^{1/2}/2$, we obtain the constraint $\varepsilon^{2\kappa-1}\geq
64\bar{\sigma}^{2}/\eta c$. This constraint is satisfied for small enough
$\varepsilon$ when $\kappa\in(0,1/2)$, and moreover one can allow
$\eta\rightarrow0$ as $\varepsilon\rightarrow0$.

For the convenience of the reader and for purposes of an easy reference, we present in Table~\ref{TableParameters}
the suggested values for $(\delta,\hat{x},M,t^{*})$, given the value of
the strength of the noise $\varepsilon>0$.
\begin{table}[t]
\caption{Parameter values for the algorithm based on a given value of $\varepsilon>0$}%
\label{TableParameters}
\begin{tabular*}{\tablewidth}{@{\extracolsep{\fill}}lcccc@{}}
\hline
\textbf{Parameter} & $\bolds{\delta}$ & $\bolds{\hat{x}}$ & $\bolds{M}$ & $\bolds{t}^{\bolds{*}}$ \\
\hline
Values & $2\varepsilon$ & $O(1)$ or $\varepsilon^{\lambda}$ with $\lambda<\kappa$ & $\max\{\frac{2c}{\bar{\sigma}^{2}}\frac{\hat{x}^{2}}{\varepsilon^{2\kappa}}, \frac{5c}{\bar{\sigma}^{2}}\}$ with $\kappa\in(0,1/2)$
& $-\frac{2}{c}\log\frac{2c}{M\bar{\sigma}^{2}}$ \\
\hline
\end{tabular*}
\vspace*{-6pt}
\end{table}

Below we present simulation data for various choices of the parameters as
indicated in the corresponding tables. In Table~\ref{Table1a1}, estimated
probabilities are reported when $M=4$ and $\hat{x}=1$, whereas the related
relative error per sample estimates are reported in Table~\ref{Table1b1}
(since the relative errors are consistently smaller we now round to the
nearest $1/10$). In Tables~\ref{Table4b1} and \ref{Table5b1} relative errors
per sample estimates are reported for combinations of $(M,\hat{x})$ that
depend on $\varepsilon$. The related probability estimates are almost
identical to those in Table~\ref{Table1a1}. Note that the degradation in
performance as $T$ gets larger observed previously in Table~\ref{Table6b}, is no longer present.
This agrees with the theoretical performance bound appearing in
Theorem~\ref{T:MainBoundLinear}.

\begin{table}[b]
\tabcolsep=-0pt
\caption{Estimated values for different pairs $(\varepsilon,T)$. $M=4$,
$\hat{x}=1$}
\label{Table1a1}
\begin{tabular*}{\tablewidth}{@{\extracolsep{\fill}}lcccccccc@{}}
\hline
& \multicolumn{8}{c@{}}{$\bolds{T}$}\\[-4pt]
& \multicolumn{8}{c@{}}{\hrulefill} \\
$\bolds{\varepsilon}$ & $\mathbf{1.5}$ & $\mathbf{2.5}$ & $\mathbf{5}$ & $\mathbf{7}$ &
$\mathbf{10}$ & $\mathbf{14}$ & $\mathbf{18}$ & $\mathbf{23}$\\
\hline
$0.20$ & $9.1\mathrm{e}{-}03$ & $2.3\mathrm{e}{-}02$ & $5.7\mathrm{e}{-}02$ & $8.3\mathrm{e}{-}02$ & $1.2\mathrm{e}{-}01$ & $1.7\mathrm{e}{-}01$
& $2.1\mathrm{e}{-}01$ & $2.7\mathrm{e}{-}01$\\
$0.16$ & $2.2\mathrm{e}{-}03$ & $6.6\mathrm{e}{-}03$ & $1.8\mathrm{e}{-}02$ & $2.7\mathrm{e}{-}02$ & $4.0\mathrm{e}{-}02$ & $5.7\mathrm{e}{-}02$
& $7.4\mathrm{e}{-}02$ & $9.5\mathrm{e}{-}02$\\
$0.13$ & $5.1\mathrm{e}{-}04$ & $1.6\mathrm{e}{-}03$ & $4.6\mathrm{e}{-}03$ & $6.9\mathrm{e}{-}03$ & $1.1\mathrm{e}{-}02$ & $1.5\mathrm{e}{-}02$
& $2.0\mathrm{e}{-}02$ & $2.6\mathrm{e}{-}02$\\
$0.11$ & $1.1\mathrm{e}{-}04$ & $3.9\mathrm{e}{-}04$ & $1.2\mathrm{e}{-}03$ & $1.8\mathrm{e}{-}03$ & $2.8\mathrm{e}{-}03$ & $4.1\mathrm{e}{-}03$
& $5.4\mathrm{e}{-}03$ & $7.0\mathrm{e}{-}03$\\
$0.09$ & $1.3\mathrm{e}{-}05$ & $5.2\mathrm{e}{-}05$ & $1.7\mathrm{e}{-}04$ & $2.6\mathrm{e}{-}04$ & $4.1\mathrm{e}{-}04$ & $5.9\mathrm{e}{-}04$
& $7.8\mathrm{e}{-}04$ & $1.0\mathrm{e}{-}03$\\
$0.07$ & $4.3\mathrm{e}{-}07$ & $2.2\mathrm{e}{-}06$ & $7.6\mathrm{e}{-}06$ & $1.2\mathrm{e}{-}05$ & $1.9\mathrm{e}{-}05$ & $2.8\mathrm{e}{-}05$
& $3.7\mathrm{e}{-}05$ & $4.8\mathrm{e}{-}05$\\
$0.05$ & $9.7\mathrm{e}{-}10$ & $6.9\mathrm{e}{-}09$ & $2.8\mathrm{e}{-}08$ & $4.4\mathrm{e}{-}08$ & $7.0\mathrm{e}{-}08$ & $1.1\mathrm{e}{-}07$
& $1.4\mathrm{e}{-}07$ & $1.8\mathrm{e}{-}07$\\
\hline
\end{tabular*}
\end{table}

\begin{table}
\tablewidth=220pt
\caption{Relative errors per sample for different pairs
$(\varepsilon,T)$.
$M=4$ and $\hat{x}=1$}
\label{Table1b1}
\begin{tabular*}{220pt}{@{\extracolsep{\fill}}lcccccccc@{}}
\hline
& \multicolumn{8}{c@{}}{$\bolds{T}$}\\[-4pt]
& \multicolumn{8}{l@{}}{\hrulefill} \\
$\bolds{\varepsilon}$ & $\mathbf{1.5}$ & $\mathbf{2.5}$ & $\mathbf{5}$ & $\mathbf{7}$ & $\mathbf{10}$
& $\mathbf{14}$ & $\mathbf{18}$ & $\mathbf{23}$\\
\hline
$0.20$ & $1.7$ & $\phantom{0}1.4$ & $1.0$ & $0.9$ & $0.6$ & $0.6$ & $0.7$ & $0.9$\\
$0.16$ & $2.1$ & $\phantom{0}1.8$ & $1.2$ & $1.0$ & $0.8$ & $0.7$ & $0.7$ & $0.8$\\
$0.13$ & $2.4$ & $\phantom{0}2.2$ & $1.6$ & $1.4$ & $1.1$ & $0.8$ & $0.8$ & $0.8$\\
$0.11$ & $2.9$ & $\phantom{0}2.7$ & $2.0$ & $1.6$ & $1.3$ & $1.1$ & $1.0$ & $0.9$\\
$0.09$ & $3.6$ & $\phantom{0}3.6$ & $2.7$ & $2.3$ & $1.8$ & $1.5$ & $1.3$ & $1.2$\\
$0.07$ & $4.9$ & $\phantom{0}5.7$ & $4.2$ & $3.4$ & $2.9$ & $2.4$ & $2.1$ & $1.9$\\
$0.05$ & $8.9$ & $13.0$ & $9.9$ & $8.3$ & $6.8$ & $5.7$ & $5.0$ &
$4.4$\\
\hline
\end{tabular*}
\end{table}

\begin{table}[b]
\tablewidth=220pt
\caption{Relative\vspace*{1.5pt} errors per sample for different pairs $(\varepsilon,T)$.
$M=\frac{2}{\sqrt{\varepsilon}}$ and $\hat{x}=1$}
\vspace*{-3pt}
\label{Table4b1}
\begin{tabular*}{220pt}{@{\extracolsep{\fill}}lccccccc@{}}\hline
& \multicolumn{7}{c@{}}{$\bolds{T}$}\\[-4pt]
& \multicolumn{7}{c@{}}{\hrulefill}\\
$\bolds{\varepsilon}$  & $\mathbf{2.5}$ & $\mathbf{5}$ & $\mathbf{7}$ & $\mathbf{10}$ &
$\mathbf{14}$ & $\mathbf{18}$ & $\mathbf{23}$\\
\hline
$0.20$ & $1.3$ & $0.9$ & $0.7$ & $0.6$ & $0.6$ & $0.7$ & $1.0$\\
$0.16$ & $1.5$ & $1.1$ & $0.8$ & $0.7$ & $0.7$ & $0.7$ & $0.9$\\
$0.13$ & $1.7$ & $1.2$ & $1.0$ & $0.8$ & $0.7$ & $0.7$ & $0.8$\\
$0.11$ & $1.8$ & $1.4$ & $1.2$ & $0.9$ & $0.8$ & $0.7$ & $0.8$\\
$0.09$ & $2.0$ & $1.6$ & $1.3$ & $1.1$ & $0.9$ & $0.8$ & $0.8$\\
$0.07$ & $2.2$ & $1.9$ & $1.6$ & $1.3$ & $1.1$ & $1.0$ & $0.9$\\
$0.05$ & $2.4$ & $2.5$ & $2.1$ & $1.7$ & $1.5$ & $1.3$ & $1.1$\\
\hline
\end{tabular*}
\vspace*{-10pt}
\end{table}

\begin{table}
\vspace*{-6pt}
\tablewidth=220pt
\caption{Relative errors per sample for different pairs $(\varepsilon,T)$.
$M=\frac{2}{\varepsilon^{0.3}}$ and $\hat{x}=\varepsilon^{0.15}$}
\vspace*{-3pt}
\label{Table5b1}
\begin{tabular*}{220pt}{@{\extracolsep{\fill}}lccccccc@{}}
\hline
& \multicolumn{7}{c@{}}{$\bolds{T}$}   \\[-4pt]
& \multicolumn{7}{l@{}}{\hrulefill}\\
$\bolds{\varepsilon}$ & $\mathbf{2.5}$ & $\mathbf{5}$ & $\mathbf{7}$ & $\mathbf{10}$ &
$\mathbf{14}$ & $\mathbf{18}$ & $\mathbf{23}$\\
\hline
$0.20$ & $1.5$ & $0.9$ & $0.7$ & $0.7$ & $0.9$ & $1.1$ & $1.3$\\
$0.16$ & $1.7$ & $1.0$ & $0.8$ & $0.7$ & $0.8$ & $1.0$ & $1.2$\\
$0.13$ & $1.8$ & $1.1$ & $0.8$ & $0.8$ & $0.8$ & $1.0$ & $1.0$\\
$0.11$ & $1.9$ & $1.1$ & $0.9$ & $0.7$ & $0.9$ & $0.9$ & $1.1$\\
$0.09$ & $2.2$ & $1.2$ & $0.9$ & $0.8$ & $0.8$ & $0.9$ & $1.1$\\
$0.07$ & $2.4$ & $1.3$ & $1.0$ & $0.8$ & $0.8$ & $0.9$ & $1.1$\\
$0.05$ & $2.9$ & $1.5$ & $1.1$ & $0.9$ & $0.8$ & $0.9$ & $1.0$\\
\hline
\end{tabular*}
\end{table}

\section{The nonlinear one-dimensional problem}
\label{S:NonLinearProblem}

In this section, we extend the construction of Section~\ref{S:LinearProblem}
to the general nonlinear one-dimensional setting. We also generalize the
notation and allow the stable equilibrium to be an arbitrary point $x_{0}$.
Consider the process model (\ref{Eq:Diffusion1}) and assume that $b,\sigma
\in\mathcal{C}^{1}(\mathbb{R})$ and that $b(x_{0})=0$, $b^{\prime}(x_{0})<0$
and $\sigma^{2}(x)\geq\sigma_{1}^{2}>0$ for all $x\in\mathbb{R}$. Thus we can
write $b(x)=-V^{\prime}(x)$ with unique local minimum at $x=x_{0}$ and
$V^{\prime\prime}(x_{0})>0$. It is easy to see that the quasipotential with
respect to the equilibrium point $x_{0}$ takes the form
\[
S(x_{0},x)=\int_{x_{0}}^{x}-2
\frac{b(z)}{\sigma^{2}(z)}%
\,dz.\label{Eq:General1D:Quasipotential}%
\]

The problem of interest is to estimate the exit probability
\[
\theta^{\varepsilon}=\mathbb{P}_{x_{0}} \bigl\{ X^{\varepsilon}
\mbox{ hits }A_{1}\mbox{ or }A_{2}\mbox{ before time }T \bigr\},
\]
where $x_{0}$ is the initial (and rest) point such that $x_{0}\in(A_{1},A_{2})$.
Furthermore, we assume that $b(x)<0$ for all $x\in(x_{0},A_{2}]$ and
$b(x)>0$ for all $x\in[A_{1},x_{0})$. Set $L\doteq\frac{1}{2} [S(x_{0},A_{1})\vee S(x_{0},A_{1}) ]$.

The approach to the nonlinear problem is to merge the linearized dynamics
around the equilibrium point with the subsolution based on the quasipotential.
This subsolution is
\[
\bar{F}_{1}(x)=2 L-S(x_{0},x).
\]
Observe that the second order approximation to this function around the
equilibrium point $x_{0}$ is
\[
F_{1}(x)=2 L-\frac{c}{\bar{\sigma}^{2}}(x-x_{0})^{2},
\]
where $c=-b^{\prime}(x_{0})$ and $\bar{\sigma}=\sigma(x_{0})$. Let $\hat
{x}_{+},\hat{x}_{-}$ be such that $\hat{x}_{+}-x_{0}=x_{0}-\hat{x}_{-}$. The
appropriate translated version of $F_{2,+}^{M}$ is
\[
F_{2,+}^{M}(t,x)=a^{M}(t) \bigl(
(x-x_{0})-(\hat{x}_{+}-x_{0})e^{-c(t-T)}%
 \bigr) ^{2}+F_{1}(\hat{x}_{+})
\]
and $F_{2,-}^{M}(t,x_{0}-x)=F_{2,+}^{M}(t,x_{0}+x)$.

The subsolution for times less than $T-t^{\ast}$ will be the mollification of
$F_{2,+}^{M}(t,x)\wedge F_{2,-}^{M}(t,x)\wedge\bar{F}_{1}(x)$. Note that since
$\bar{F}_{1}(x)$ agrees with $F_{1}(x)$ up to second order, it is still the
case that $F_{2,+}^{M}(t,x)\wedge F_{2,-}^{M}(t,x)$ will be smallest near
$x=x_{0}$. Letting
\begin{equation}\label{Eq:second_moll}
U^{\delta}(t,x)=-\delta\log \bigl( e^{-({1}/{\delta})F_{2,+}^{M}%
(t,x)}+e^{-({1}/{\delta})F_{2,-}^{M}(t,x)}+e^{-({1}/{\delta})\bar{F}%
_{1}(x)}
\bigr)
\end{equation}
and%
\begin{equation}\label{Eq:NonLinearChangeOfMeasure}
\bar{U}^{\delta}(t,x)= \cases{
\bar{F}_{1}(x), &\quad $t>T-t^{\ast}$,
\vspace*{3pt}\cr
U^{\delta}(t,x), &\quad $t\leq T-t^{\ast}$,}
\end{equation}
the suggested importance sampling control that is used for the simulation is
\[
\bar{u}(t,x)=-\sigma(x)D\bar{U}^{\delta}(t,x).
\]
Notice that this construction reduces to the construction of the linear case
if the potential is indeed quadratic, since then $\bar{F}_{1}(x)=F_{1}(x)$.

In Section~\ref{SS:SimulationNonLinearProblem} we present simulation data
for the nonlinear problem, demonstrating the effectiveness of the suggested
change of measure. The analysis and the theoretical bound for the performance
of this scheme are completely analogous to the linear problem, modulo the
additional error coming from the linearization of the dynamics in the
neighborhood of the stable equilibrium point. In Section~\ref{SS:ProofNonLinearProblem}
we rigorously analyze the performance of this algorithm.

\subsection{Simulation results for nonlinear problem}
\label{SS:SimulationNonLinearProblem}

In this subsection, we\break present simulation data for the nonlinear problem. We
take the drift to be $b(x)=-V^{\prime}(x)$, where the potential function is
$V(x)=\frac{1}{2}(x^{2}-1)^{2}$. This potential function has two stable points
at $-1$ and at $+1$, and an unstable equilibrium at $0$. We assume that the
starting point is at the left equilibrium point $x_{0}=-1$, and the exit set
is the level set of the potential function $\mathcal{D}=\{x \dvtx  V(x)\leq L\}$,
with $L=0.45$. Thus exit occurs from either of the points $A_{1}=-1.40$ or
$A_{2}=-0.23$.

Notice that the local quadratic approximation around the equilibrium point is
$V_{q}(x)=\frac{1}{2}c(x+1)^{2}$ with $c=4$. Moreover, we have chosen, for
simplicity, the diffusion coefficient to be constant $\sigma(x)=1$. $N=10^{7}$
independent trajectories were used for the simulations.

We first investigate the performance of a change of measure based on the
quasipotential subsolution. Thus\vspace*{1pt} we change the measure via the control
$\bar{u}(x)=-D\bar{F}_{1}(x)$. Estimated values and the corresponding
estimated relative errors per sample for several values of $(\varepsilon,T)$
are in Tables~\ref{Table1aNL} and \ref{Table1bNL}, respectively.

\begin{table}[t]
\tabcolsep=0pt
\caption{Using the subsolution based on quasipotential throughout. Estimated
values for different pairs~$(\varepsilon,T)$}
\label{Table1aNL}
{\fontsize{8pt}{10pt}\selectfont{\begin{tabular*}{\tablewidth}{@{\extracolsep{\fill}}lccccccccc@{}}
\hline
& \multicolumn{9}{c@{}}{$\bolds{T}$}\\[-4pt]
& \multicolumn{9}{c@{}}{\hrulefill}\\
$\bolds{\varepsilon}$ & $\mathbf{0.5}$ & $\mathbf{1}$ & $\mathbf{1.5}$ & $\mathbf{2.5}$ &
$\mathbf{4}$ & $\mathbf{5}$ & $\mathbf{6}$ & $\mathbf{8}$ & $\mathbf{10}$\\
\hline
$0.14$ & $3.01\mathrm{e}{-}03 $ & $8.21\mathrm{e}{-}03$ & $1.36\mathrm{e}{-}02$ & $2.48\mathrm{e}{-}02$ & $5.82\mathrm{e}{-}02$ &
$6.22\mathrm{e}{-}02 $ & $4.11\mathrm{e}{-}02 $ & $4.29\mathrm{e}{-}02$ & $5.31\mathrm{e}{-}02$\\
$0.12$ & $1.03\mathrm{e}{-}03 $ & $2.91\mathrm{e}{-}03$ & $4.92\mathrm{e}{-}03$ & $8.95\mathrm{e}{-}03$ & $1.46\mathrm{e}{-}02$ &
$1.73\mathrm{e}{-}02 $ & $2.10\mathrm{e}{-}02 $ & $1.81\mathrm{e}{-}02$ & $1.72\mathrm{e}{-}02$\\
$0.09$ & $8.27\mathrm{e}{-}05 $ & $2.52\mathrm{e}{-}04$ & $4.35\mathrm{e}{-}04$ & $8.10\mathrm{e}{-}04$ & $1.33\mathrm{e}{-}03$ &
$1.53\mathrm{e}{-}03 $ & $1.65\mathrm{e}{-}03 $ & $1.58\mathrm{e}{-}03$ & $1.57\mathrm{e}{-}03$\\
$0.07$ & $4.60\mathrm{e}{-}06 $ & $1.49\mathrm{e}{-}05$ & $2.64\mathrm{e}{-}05$ & $4.97\mathrm{e}{-}05$ & $7.63\mathrm{e}{-}05$ &
$1.11\mathrm{e}{-}04 $ & $9.74\mathrm{e}{-}05 $ & $1.21\mathrm{e}{-}04$ & $9.19\mathrm{e}{-}05$\\
$0.05$ & $2.49\mathrm{e}{-}08 $ & $8.91\mathrm{e}{-}08$ & $1.62\mathrm{e}{-}07$ & $3.15\mathrm{e}{-}07$ & $5.03\mathrm{e}{-}07$ &
$6.19\mathrm{e}{-}07 $ & $6.78\mathrm{e}{-}07 $ & $5.85\mathrm{e}{-}07$ & $6.29\mathrm{e}{-}07$\\
$0.03$ & $1.25\mathrm{e}{-}13$ & $5.40\mathrm{e}{-}13$ & $1.03\mathrm{e}{-}12$ & $2.10\mathrm{e}{-}12$ & $3.80\mathrm{e}{-}12$ &
$6.12\mathrm{e}{-}12 $ & $4.87\mathrm{e}{-}12 $ & $1.04\mathrm{e}{-}11$ & $5.18\mathrm{e}{-}12$\\
\hline
\end{tabular*}}}
\end{table}

\begin{table}
\caption{Using subsolution based on quasipotential throughout. Relative errors
per sample for different pairs~$(\varepsilon,T)$}
\label{Table1bNL}
\begin{tabular*}{\tablewidth}{@{\extracolsep{\fill}}lccccccccc@{}}\hline
& \multicolumn{9}{c}{$\bolds{T}$}       \\[-4pt]
& \multicolumn{9}{c@{}}{\hrulefill}\\
$\bolds{\varepsilon}$ & $\mathbf{0.5}$ & $\mathbf{1}$ & $\mathbf{1.5}$ & $\mathbf{2.5}$ &
$\mathbf{4}$ & $\mathbf{5}$ & $\mathbf{6}$ & $\mathbf{8}$ & $\mathbf{10}$\\
\hline
$0.14$ & $2$ & $2$ & $4$ & $21$ & $799$ & $953 $ & $127 $ & \phantom{0}$169$ &
$488$\\
$0.12$ & $2$ & $2$ & $4$ & $18$ & $125$ & $315 $ & $649 $ & \phantom{0}$368$ &
$301$\\
$0.09$ & $2$ & $2$ & $4$ & $20$ & $143$ & $155 $ & $311 $ & \phantom{0}$173$ &
$288$\\
$0.07$ & $2$ & $2$ & $4$ & $17$ & \phantom{0}$68$ & $433 $ & $192 $ & \phantom{0}$540$ &
$272$\\
$0.05$ & $2$ & $2$ & $4$ & $16$ & \phantom{0}$77$ & $296 $ & $410 $ & \phantom{0}$148$ &
$287$\\
$0.03$ & $2$ & $2$ & $3$ & $14$ & $160$ & $638 $ & $347 $ & $1933$ &
$317$\\
\hline
\end{tabular*}
\end{table}

As we see from Table~\ref{Table1bNL}, even though the quasipotential
subsolution performs relatively well for small values of $T$, there is a clear
degradation of performance as $T$ gets larger. It is also interesting to note
that the degradation is uniform across all values of $\varepsilon$ for the
same value of $T$. This behavior parallels what was observed for the linear
problem. As was mentioned there, the large per sample relative errors for
$T\geq2.5$ should not be taken as being accurate, but just indicative of poor performance.
\begin{sidewaystable}
\tablewidth=\textwidth
\caption{Estimated probability values for different pairs $(\varepsilon,T)$
using the exponential mollification. Parameter choices $M=\frac{2c}%
{\bar{\sigma}^{2}}\frac{\hat{x}^{2}}{\varepsilon^{2\kappa}}$ with $\kappa=0.4$
and $\hat{x}=0.4$}
\label{Table2aNL}
\begin{tabular*}{\tablewidth}{@{\extracolsep{\fill}}lcccccccccc@{}}
\hline
& \multicolumn{10}{c@{}}{$\bolds{T}$}\\[-4pt]
& \multicolumn{10}{c@{}}{\hrulefill}\\
$\bolds{\varepsilon}$  & $\mathbf{0.5}$ & $\mathbf{1}$ & $\mathbf{1.5}$ & $\mathbf{2.5}$ &
$\mathbf{4}$ & $\mathbf{5}$ & $\mathbf{6}$ & $\mathbf{8}$ & $\mathbf{10}$ & $\mathbf{13}$\\
\hline
$0.14$ & $3.07\mathrm{e}{-}03 $ & $8.36\mathrm{e}{-}03$ & $1.39\mathrm{e}{-}02$ & $2.49\mathrm{e}{-}02$ & $4.11\mathrm{e}{-}02$ &
$5.12\mathrm{e}{-}02 $ & $6.24\mathrm{e}{-}02 $ & $8.33\mathrm{e}{-}02$ & $1.04\mathrm{e}{-}01$ & $1.34\mathrm{e}{-}01$\\
$0.12$ & $1.05\mathrm{e}{-}03 $ & $2.97\mathrm{e}{-}03$ & $4.99\mathrm{e}{-}03$ & $9.06\mathrm{e}{-}03$ & $1.52\mathrm{e}{-}02$ &
$1.91\mathrm{e}{-}02 $ & $2.32\mathrm{e}{-}02 $ & $3.12\mathrm{e}{-}02$ & $3.92\mathrm{e}{-}02$ & $5.09\mathrm{e}{-}03$\\
$0.09$ & $8.36\mathrm{e}{-}05 $ & $2.53\mathrm{e}{-}04$ & $4.39\mathrm{e}{-}04$ & $8.12\mathrm{e}{-}04$ & $1.38\mathrm{e}{-}03$ &
$1.76\mathrm{e}{-}03 $ & $2.14\mathrm{e}{-}03 $ & $2.89\mathrm{e}{-}03$ & $3.65\mathrm{e}{-}03$ & $4.78\mathrm{e}{-}03$\\
$0.08$ & $2.36\mathrm{e}{-}05 $ & $7.41\mathrm{e}{-}05$ & $1.29\mathrm{e}{-}04$ & $2.41\mathrm{e}{-}04$ & $4.11\mathrm{e}{-}04$ &
$5.24\mathrm{e}{-}04 $ & $6.38\mathrm{e}{-}04 $ & $8.63\mathrm{e}{-}04$ & $1.08\mathrm{e}{-}03$ & $1.43\mathrm{e}{-}03$\\
$0.07$ & $4.60\mathrm{e}{-}06 $ & $1.49\mathrm{e}{-}05$ & $2.65\mathrm{e}{-}05$ & $5.01\mathrm{e}{-}05$ & $8.55\mathrm{e}{-}05$ &
$1.09\mathrm{e}{-}04 $ & $1.33\mathrm{e}{-}04 $ & $1.81\mathrm{e}{-}04$ & $2.28\mathrm{e}{-}04$ & $2.99\mathrm{e}{-}04$\\
$0.05$ & $2.48\mathrm{e}{-}08 $ & $8.91\mathrm{e}{-}08$ & $1.63\mathrm{e}{-}07$ & $3.16\mathrm{e}{-}07$ & $5.44\mathrm{e}{-}07$ &
$6.99\mathrm{e}{-}07 $ & $8.51\mathrm{e}{-}07 $ & $1.16\mathrm{e}{-}06$ & $1.47\mathrm{e}{-}06$ & $1.92\mathrm{e}{-}06$\\
$0.04$ & $2.57\mathrm{e}{-}10 $ & $9.89\mathrm{e}{-}10$ & $1.85\mathrm{e}{-}09$ & $3.64\mathrm{e}{-}09$ & $6.35\mathrm{e}{-}09$ &
$8.15\mathrm{e}{-}09 $ & $1.01\mathrm{e}{-}08 $ & $1.36\mathrm{e}{-}08$ & $1.72\mathrm{e}{-}08$ & $2.26\mathrm{e}{-}08$\\
$0.03$ & $1.25\mathrm{e}{-}13 $ & $5.38\mathrm{e}{-}13$ & $1.03\mathrm{e}{-}12$ & $2.08\mathrm{e}{-}12$ & $3.68\mathrm{e}{-}12$ &
$4.74\mathrm{e}{-}12 $ & $5.80\mathrm{e}{-}12 $ & $7.94\mathrm{e}{-}12$ & $1.01\mathrm{e}{-}11$ & $1.32\mathrm{e}{-}11$\\
\hline
\end{tabular*}
\end{sidewaystable}

\begin{table}[t]
\caption{Relative errors per sample for different pairs $(\varepsilon,T)$
using the exponential mollification. Parameter choices $M=\frac{2c}%
{\bar{\sigma}^{2}}\frac{\hat{x}^{2}}{\varepsilon^{2\kappa}}$ with $\kappa=0.4$
and $\hat{x}=0.4$}
\label{Table2bNL}
\begin{tabular*}{\tablewidth}{@{\extracolsep{\fill}}lcccccccccc@{}}
\hline
& \multicolumn{10}{c@{}}{$\bolds{T}$}\\[-4pt]
& \multicolumn{10}{c@{}}{\hrulefill}\\
$\bolds{\varepsilon}$ & $\mathbf{0.5}$ & $\mathbf{1}$ & $\mathbf{1.5}$ & $\mathbf{2.5}$ &
$\mathbf{4}$ & $\mathbf{5}$ & $\mathbf{6}$ & $\mathbf{8}$ & $\mathbf{10}$ & $\mathbf{13}$\\
\hline
$0.14$ & $3.7$ & $2.3$ & $1.8$ & $1.3$ & $1.1$ & $1.0 $ & $1.0$ & $1.1$ &
$1.3$ & $1.7$\\
$0.12$ & $4.2$ & $2.6$ & $2.0$ & $1.5$ & $1.2$ & $1.1 $ & $1.1$ & $1.1$ &
$1.2$ & $1.5$\\
$0.09$ & $5.1$ & $3.3$ & $2.6$ & $1.9$ & $1.5$ & $1.3 $ & $1.2$ & $1.2$ &
$1.2$ & $1.2$\\
$0.08$ & $5.3$ & $3.7$ & $2.8$ & $2.1$ & $1.8$ & $1.4 $ & $1.3$ & $1.2$ &
$1.2$ & $1.2$\\
$0.07$ & $5.5$ & $4.1$ & $3.2$ & $2.3$ & $1.8$ & $1.6 $ & $1.5$ & $1.3$ &
$1.2$ & $1.2$\\
$0.05$ & $4.9$ & $5.4$ & $4.5$ & $3.2$ & $2.5$ & $2.2 $ & $2.0$ & $1.7$ &
$1.6$ & $1.3$\\
$0.04$ & $3.2$ & $6.5$ & $5.4$ & $4.1$ & $3.2$ & $2.8$ & $2.6$ & $2.2$ & $2.0$
& $1.8$\\
$0.03$ & $2.8$ & $8.0$ & $7.3$ & $5.6$ & $4.4$ & $3.9 $ & $3.5$ & $3.0$ &
$2.7$ & $2.4$\\
\hline
\end{tabular*}
\end{table}

\begin{table}[b]
\caption{Relative errors per sample for different pairs $(\varepsilon,T)$
using the exponential mollification. Parameter choices $M=\frac{2c}{\bar{\sigma}^{2}}\frac{\hat{x}^{2}}{\varepsilon^{2\kappa}}$
with $\kappa=0.4$
and $\hat{x}=0.5$}
\label{Table3bNL}
\begin{tabular*}{\tablewidth}{@{\extracolsep{\fill}}lcccccccccc@{}}\hline
& \multicolumn{10}{c@{}}{$\bolds{T}$}\\[-4pt]
& \multicolumn{10}{l@{}}{\hrulefill} \\
$\bolds{\varepsilon}$ & $\mathbf{0.5}$ & $\mathbf{1}$ & $\mathbf{1.5}$ & $\mathbf{2.5}$ &
$\mathbf{4}$ & $\mathbf{5}$ & $\mathbf{6}$ & $\mathbf{8}$ & $\mathbf{10}$ & $\mathbf{13}$\\
\hline
$0.14$ & $2.7$ & $2.5$ & $2.0$ & $1.5$ & $1.2$ & $1.0 $ & $1.0$ & $1.0$ &
$1.0$ & $1.1$\\
$0.12$ & $2.6$ & $2.9$ & $2.4$ & $1.8$ & $1.4$ & $1.2 $ & $1.1$ & $1.1$ &
$1.1$ & $1.1$\\
$0.09$ & $2.1$ & $3.7$ & $3.1$ & $2.4$ & $1.9$ & $1.7 $ & $1.6$ & $1.4$ &
$1.3$ & $1.2$\\
$0.08$ & $1.9$ & $4.1$ & $3.5$ & $2.7$ & $2.1$ & $1.8 $ & $1.8$ & $1.5$ &
$1.4$ & $1.3$\\
$0.07$ & $1.9$ & $4.5$ & $3.9$ & $3.0$ & $2.4$ & $2.1 $ & $1.9$ & $1.7$ &
$1.5$ & $1.4$\\
$0.05$ & $1.8$ & $5.5$ & $5.2$ & $4.2$ & $3.2$ & $2.9 $ & $2.7$ & $2.3$ &
$2.0$ & $1.9$\\
$0.04$ & $1.7$ & $6.0$ & $6.4$ & $5.2$ & $4.1$ & $3.7$ & $3.4$ & $2.9$ & $2.7$
& $2.3$\\
$0.03$ & $2.0$ & $5.9$ & $8.2$ & $6.9$ & $5.6$ & $4.9 $ & $4.5$ & $3.9$ &
$3.6$ & $3.1$\\
\hline
\end{tabular*}
\end{table}

Next we investigate how the suggested change of measure performs. To apply the
control, we choose values for the parameters $(\hat{x},M,t^{\ast},\delta)$
according to the discussion in Section~\ref{S:SimulationResultsLinearProblem}. However, for reasons that will become
clearer in the proof of Lemma \ref{L:Region_y_hatx_NonLinear}, we need to
strengthen the condition $M\geq4c/\bar{\sigma}^{2}$ to $M>4c/\bar{\sigma}^{2}%
$, say $M\geq5c/\bar{\sigma}^{2}$. When we link the other parameters to
$\varepsilon$, then $z\rightarrow0$ as $\varepsilon\rightarrow0$. Since $z$ measures the size of the neighborhood
on which the linearization is relevant, we do not explicitly take into account the error from the approximation
around the neighborhood of the rest point of the true dynamics by its
linearization when selecting the parameters for the implementation of the scheme.

Estimated values and corresponding estimated relative errors of the exit
probabilities of interest for different pairs $(\varepsilon,T)$ and different
combinations values for~$z$ are in Tables~\ref{Table2aNL}--\ref{Table5bNL}. Estimated relative errors for $M=\frac{2c}{\bar{\sigma}^{2}%
}\frac{\hat{x}^{2}}{\varepsilon^{2\kappa}}$ with $\kappa=0.4$  and $\hat
{x}=0.4$ are reported in Table~\ref{Table2aNL}, whereas the related relative
errors are reported in Table~\ref{Table2bNL}. In Tables~\ref{Table3bNL} and
\ref{Table4bNL}, we report only estimated relative errors for the same value
of $\kappa$ but for $\hat{x}=0.5$ and $\hat{x}=1$, respectively. The related
probability estimates are almost identical to those of Table~\ref{Table2aNL},
so they are not repeated.

Note that for Table~\ref{Table3bNL}, $t^{\ast}\geq T$ when $T=0.5$ and for
$\varepsilon\leq0.05$. Similarly, for Table~\ref{Table4bNL}, $t^{\ast}\geq T$
when $T=0.5$ and for $T=1$ when $\varepsilon\leq0.09$. For such values, the
quasipotential subsolution is being used everywhere [see
(\ref{Eq:NonLinearChangeOfMeasure})], and the numerical results for these
values agree with those from Table~\ref{Table1bNL}.

In order to illustrate the effect when the linear approximation is used over a
relatively large region, the data in Table~\ref{Table5bNL} are estimated
relative errors when $M$ is considerably smaller than before, and thus $z$ is
considerably larger. In particular, we have taken $\kappa=0.25$ and $\hat
{x}=1$. Comparing Tables~\ref{Table4bNL} and \ref{Table5bNL}, we notice that
if the error from the linearization is not confined to a small enough region,
then the algorithm degrades in $\varepsilon$ though it appears stable in $T$.
This is consistent with the theoretical results, which imply a uniformity in
$T$ but only logarithmic optimality in $\varepsilon$. That said, one would
like to minimize errors associated with linearization as far as possible. As
noted in Section~\ref{S:SimulationResultsLinearProblem}, one should choose
the scaling parameter $\kappa\in(0,1/2)$. However, with the nonlinear problem
minimizing the region over which the approximation is used calls for larger
$\kappa$, and so one want it close to but not equal to $1/2$. For the problems
considered here, $\kappa=0.4$ worked well.

\begin{table}[t]
\caption{Relative errors per sample for different pairs $(\varepsilon,T)$
using the exponential mollification. Parameter choices $M=\frac{2c}%
{\bar{\sigma}^{2}}\frac{\hat{x}^{2}}{\varepsilon^{2\kappa}}$ with $\kappa=0.4$
and $\hat{x}=1$}
\label{Table4bNL}
\begin{tabular*}{\tablewidth}{@{\extracolsep{\fill}}lcccccccccc@{}}
\hline
& \multicolumn{10}{c@{}}{$\bolds{T}$}\\[-4pt]
& \multicolumn{10}{c@{}}{\hrulefill}\\
$\bolds{\varepsilon}$ & $\mathbf{0.5}$ & $\mathbf{1}$ & $\mathbf{1.5}$ & $\mathbf{2.5}$ &
$\mathbf{4}$ & $\mathbf{5}$ & $\mathbf{6}$ & $\mathbf{8}$ & $\mathbf{10}$ & $\mathbf{13}$\\
\hline
$0.14$ & $1.5$ & $2.4$ & $2.6$ & $2.2$ & $1.8$ & $1.6 $ & $1.5$ & $1.3$ &
$1.2$ & $1.0$\\
$0.12$ & $1.5$ & $2.3$ & $3.0$ & $2.7$ & $2.2$ & $2.0 $ & $1.9$ & $1.6$ &
$1.4$ & $1.3$\\
$0.09$ & $1.6$ & $2.2$ & $3.9$ & $3.8$ & $3.2$ & $2.9 $ & $2.7$ & $2.4$ &
$2.1$ & $1.9$\\
$0.08$ & $1.6$ & $2.1$ & $4.1$ & $4.2$ & $3.7$ & $3.3 $ & $3.1$ & $2.7$ &
$2.5$ & $2.1$\\
$0.07$ & $1.8$ & $2.1$ & $4.2$ & $4.7$ & $4.0$ & $3.7 $ & $3.4$ & $3.0$ &
$2.7$ & $2.4$\\
$0.05$ & $1.8$ & $2.1$ & $4.4$ & $6.0$ & $5.3$ & $4.8 $ & $4.5$ & $3.9$ &
$3.6$ & $3.2$\\
$0.04$ & $1.9$ & $2.1$ & $4.8$ & $7.1$ & $6.3$ & $5.8$ & $5.5$ & $4.8$ & $4.3$
& $3.9$\\
$0.03$ & $2.0$ & $2.1$ & $3.5$ & $8.7$ & $8.2$ & $7.5 $ & $6.9$ & $6.2$ &
$5.6$ & $4.9$\\
\hline
\end{tabular*}
\end{table}

\begin{table}[b]
\caption{Relative errors per sample for different pairs $(\varepsilon,T)$
using the exponential mollification. Parameter choices $M=\frac{2c}%
{\bar{\sigma}^{2}}\frac{\hat{x}^{2}}{\varepsilon^{2\kappa}}$ with
$\kappa=0.25$ and $\hat{x}=1$}
\label{Table5bNL}
\begin{tabular*}{\tablewidth}{@{\extracolsep{\fill}}lcccccccccc@{}}
\hline
& \multicolumn{10}{c@{}}{$\bolds{T}$}\\[-4pt]
& \multicolumn{10}{c@{}}{\hrulefill}\\
$\bolds{\varepsilon}$ & $\mathbf{0.5}$ & $\mathbf{1}$ & $\mathbf{1.5}$ & $\mathbf{2.5}$ &
$\mathbf{4}$ & $\mathbf{5}$ & $\mathbf{6}$ & $\mathbf{8}$ & $\mathbf{10}$ & $\mathbf{13}$\\
\hline
$0.14$ & $1.5$ & \phantom{0}$5$  & \phantom{00}$5$   & \phantom{00}$4$   & \phantom{00}$3$   & \phantom{00}$3$   & \phantom{000}$3$    & \phantom{000}$2$    & \phantom{00}$2$   & \phantom{000}$2$    \\
$0.12$ & $1.5$ & \phantom{0}$7$  & \phantom{00}$7$   & \phantom{00}$6$   & \phantom{00}$5$   & \phantom{00}$5$   & \phantom{000}$4$    & \phantom{000}$4$    & \phantom{00}$3$   & \phantom{000}$3$    \\
$0.09$ & $1.6$ & $14$ & \phantom{0}$19$  & \phantom{0}$17$  & \phantom{0}$14$  & \phantom{0}$13$  & \phantom{00}$12$   & \phantom{00}$11$   & \phantom{00}$9$   & \phantom{000}$8$    \\
$0.08$ & $1.6$ & $21$ & \phantom{0}$30$  & \phantom{0}$28$  & \phantom{0}$23$  & \phantom{0}$22$  & \phantom{00}$20$   & \phantom{00}$17$   & \phantom{0}$15$  & \phantom{00}$14$   \\
$0.07$ & $1.8$ & $31$ & \phantom{0}$54$  & \phantom{0}$51$  & \phantom{0}$42$  & \phantom{0}$39$  & \phantom{00}$36$   & \phantom{00}$31$   & \phantom{0}$28$  & \phantom{00}$25$   \\
$0.05$ & $1.8$ & $36$ & $276$ & $278$ & $214$ & $220$ & \phantom{0}$185$  & \phantom{0}$153$  & $148$ & \phantom{0}$128$  \\
$0.04$ & $1.9$ & \phantom{0}$5$  & $568$ & $577$ & $665$ & $616$  & \phantom{0}$507$  & \phantom{0}$400$  & $415$ & \phantom{0}$374$  \\
$0.03$ & $2.0$ & \phantom{0}$3$  & $302$ & \phantom{0}$60$  & $190$ & \phantom{0}$39$  & $1878$ & $1485$ & \phantom{0}$96$  & $1562$ \\
\hline
\end{tabular*}
\end{table}

\subsection{Analysis of the simulation scheme for the nonlinear problem}

\label{SS:ProofNonLinearProblem} In this subsection, we present the
theoretical analysis of the simulation scheme for general one-dimensional
nonlinear dynamics and provide rigorous bounds on performance. As for the
linear case, the analysis is valid for $\varepsilon>0$ without degradation as
$T\rightarrow\infty$. The analysis and the theoretical bound for the
performance of this scheme are completely analogous to the linear problem,
modulo the additional error coming from the linearization of the dynamics in
the neighborhood of the stable equilibrium point.

To distinguish between the linear and the nonlinear problem, we need to
introduce some notation. For a function $W\in\mathcal{C}^{1,2}([0,T]\times
\mathbb{R})$, we define the operator
\begin{eqnarray*}
\bar{\mathcal{G}}^{\varepsilon}[W](t,x) &=& W_{t}(t,x)+\bar{
\mathbb{H}}\bigl(x,DW(t,x)\bigr)+\frac{\varepsilon}{2}
\sigma^{2}(x)D^{2}W(t,x),\\
 \bar{\mathbb{H}}(x,p) &=& b(x)p-
\tfrac{1}{2}\bigl\llvert \sigma(x)p\bigr\rrvert ^{2}.
\end{eqnarray*}
In analogy to (\ref{Eq:Ge_2}), for smooth functions $W,U$, we define
\[
\bar{{\mathcal{G}}}^{\varepsilon}[W,U](t,x)=\bar{\mathcal{G}}^{\varepsilon
}[W](t,x)-
\tfrac{1}{2}\bigl\llvert \sigma(x) \bigl( DW(t,x)-DU(t,x) \bigr) \bigr\rrvert
^{2}.\label{Eq:Ge_2NL}%
\]
Moreover, setting $c=-b^{\prime}(x_{0})>0$ and $\bar{\sigma}=\sigma(x_{0})$,
we recall that
\begin{eqnarray*}
\mathcal{G}^{\varepsilon}[W](t,x) &=& W_{t}(t,x)+\mathbb{H}
\bigl(x,DW(t,x)\bigr)+\frac
{\varepsilon}{2}\bar{\sigma}^{2}D^{2}W(t,x),\\
\mathbb{H}(x,p)&=& -cxp-\tfrac{1}{2}\llvert \bar{\sigma}p\rrvert
^{2}.
\end{eqnarray*}

The operators with bars correspond to the nonlinear problem, whereas the
operators without bars give the corresponding first-order approximations. We
define an operator measuring the error from the approximation by%
%
\begin{equation}\label{Eq:ErrorOfOperator}%
R^{\varepsilon}[W](t,x)=\bar{\mathcal{G}}^{\varepsilon}[W](t,x)-
\mathcal{G}%
^{\varepsilon}[W](t,x).
\end{equation}
Moreover, since $b(x)$ and $\sigma(x)$ are $\mathcal{C}^{1}(\mathbb{R})$, we
can write for any $x\in\mathbb{R}$,
\[
b(x)  =b(x_{0})+b^{\prime}(x_{0})
(x-x_{0})+R_{1}(x)\quad \mbox{and}\quad \sigma(x) =
\sigma(x_{0})+R_{2}(x),
\]
where $R_{1}(x)|x|^{-2}$ and $R_{2}(x)|x|^{-1}$ are locally bounded. By
assumption we have that $b(x_{0})=0$ and $\sigma^{2}(x)>0$.

As with the linear problem, the subsolution used for the analysis is based on
the $\delta$-exponential mollification (\ref{Eq:second_moll}) reduced by the
multiplicative factor $(1-\eta)$. We recall that this differs from the
subsolution used for the design, which has $\eta=0$.

Next we proceed with the mathematical analysis of the scheme. The analysis is
parallel to what was done for the linear problem, modulo adjustments due to
the linearization of the dynamics in the neighborhood of the stable point, and
therefore we mainly focus on the differences. In order to simplify the
notation we assume without loss of generality (as it was done in the linear
problem) that the stable equilibrium is $x_{0}=0$. We write $\hat{x}_{+}%
=-\hat{x}_{-}=\hat{x}$, and for notational convenience assume that
$A_{2}=-A_{1}=A$. As in the linear problem, $z=\hat{x}(c/M\bar{\sigma}%
^{2})^{1/2}/2$ and $H=10z$.

The following lemma bounds the error from the approximation.

\begin{lemma}
\label{L:BoundForApproximationOfOperatorG}
Consider $(t,x)\in [
0,T-t^{\ast}]\times[0,z]$. Then, for $z<\min\{1,A\}$,
\begin{eqnarray*}
&& \bigl\llvert R^{\varepsilon}\bigl[F_{2}^{M}\bigr](t,x)\bigr
\rrvert \\
&&\qquad \leq  2a^{M}%
(t) \bigl(z+\hat{x}e^{-c(t-T)}
\bigr)\sup_{x\in [0,z]}\bigl|b(x)+cx\bigr|
\\
&&\qquad\quad{}+ \bigl[ 2 \bigl( a^{M}(t) \bigl(z+\hat{x}e^{-c(t-T)}\bigr)
\bigr) ^{2}+\varepsilon a^{M}(t) \bigr] \sup
_{x\in [0,z]}\bigl|\sigma^{2}%
(x)-\bar{
\sigma}^{2}\bigr|
\\
&&\qquad \leq   C_{0} \bigl\{ a^{M}(t) \bigl(z+
\hat{x}e^{-c(t-T)}\bigr)z^{2}+ \bigl( a^{M}(t) \bigl(z+
\hat{x}e^{-c(t-T)}\bigr) \bigr) ^{2}z+a^{M}(t)
\varepsilon z \bigr\},
\end{eqnarray*}
where
\[
C_{0}=\sup_{x\in [0,A]} \biggl[ \frac{\llvert  R_{1}(x)\rrvert
}{|x|^{2}}+
\frac{\llvert  R_{2}(x)\rrvert  }{|x|}\bigl\llvert 2\bar{\sigma }+R_{2}(x)\bigr\rrvert
\biggr].
\]
In addition, for a fixed constant $C_{1}<\infty$, we have
\[
\sup_{x\in [0,z]}\bigl|\bar{F}_{1}(x)-F_{1}(x)\bigr|\leq
C_{1}z^{3}.
\]
\end{lemma}

\begin{pf}
The conclusion follows after a straightforward substitution by using that for all $x\in [0,A]$,
$|R_{1}(x)|\leq C|x|^{2}$  and $|R_{2}(x)|\leq C|x|$ for a constant $C$ that depends only on $A$.
\end{pf}

For notational convenience we identify the quantity appearing in the upper
bound for $\llvert  R^{\varepsilon}[F_{2}^{M}](t,x)\rrvert$ as given in
Lemma \ref{L:BoundForApproximationOfOperatorG},%
\begin{eqnarray}
r(\varepsilon,\hat{x},M,t)&=& a^{M}(t) \bigl(z+\hat{x}e^{-c(t-T)}
\bigr)z^{2}
\nonumber
\\[-8pt]
\label{Eq:defofR}
\\[-8pt]
\nonumber
&&{}+ \bigl( a^{M}(t) \bigl(z+\hat{x}e^{-c(t-T)}
\bigr) \bigr) ^{2}z+a^{M}(t)\varepsilon z.
\end{eqnarray}
The following lemma shows that the error term induced by the local
approximation of the dynamics in the neighborhood of the stable equilibrium
point does not degrade as $T$ gets large.

\begin{lemma}
\label{L:IntegralOfCorrectionTerm} We have that
\begin{eqnarray*}
\int_{0}^{T-t^{\ast}}r(\varepsilon,\hat{x},M,t)\,dt &=&
J_{1}\bigl(t^{\ast}%
,T,M\bigr)z^{3}+J_{2}
\bigl(t^{\ast},T,M\bigr)z^{2}\hat{x}\\
&&{}+ J_{3}
\bigl(t^{\ast},T,M\bigr)z\hat{x}%
^{2}+J_{4}
\bigl(t^{\ast},T,M\bigr) \varepsilon z
\end{eqnarray*}
where, letting $K=\frac{2c}{M}+\bar{\sigma}^{2}$,
\begin{eqnarray*}
J_{1}\bigl(t^{\ast},T,M\bigr) & =& \frac{1}{2\bar{\sigma}^{2}} \biggl(
1-\frac{c}%
{\bar{\sigma}^{2}} \biggr) \log\frac{K-\bar{\sigma}^{2}e^{-2cT}}{K-\bar
{\sigma}^{2}e^{-2ct^{\ast}}}\nonumber\\
&&{}+\frac{c}{2\bar{\sigma}^{4}} \biggl[
\frac
{K}{K-\bar{\sigma}^{2}e^{-2ct^{\ast}}}-\frac{K}{K-\bar{\sigma}^{2}e^{-2cT}%
} \biggr]
\nonumber
\\
J_{2}\bigl(t^{\ast},T,M\bigr) & =& \frac{1}{\bar{\sigma}\sqrt{K}} \biggl( 1-
\frac{c}%
{\bar{\sigma}^{2}} \biggr) \biggl[ \log\frac{1+({\bar{\sigma}}/{\sqrt{K}})e^{-ct^{\ast}}}{1-({\bar{\sigma}}/{\sqrt{K}})e^{-ct^{\ast}}}
\\
&&\qquad\qquad\hspace*{35pt}{}-
\log \frac{1+({\bar{\sigma}}/{\sqrt{K}})e^{-cT}}{1-({\bar{\sigma}}/{\sqrt{K}})e^{-cT}}
\biggr]
\nonumber
\\
&&{}+\frac{c}{2\bar{\sigma}^{4}} \biggl[ \frac{2\bar{\sigma}^{2}e^{-ct^{\ast}}%
}{K-\bar{\sigma}^{2}e^{-2ct^{\ast}}}-\frac{2\bar{\sigma}^{2}e^{-cT}}%
{K-\bar{\sigma}^{2}e^{-2cT}} \biggr]
\nonumber
\\
J_{3}\bigl(t^{\ast},T,M\bigr) & =&\frac{c}{2\bar{\sigma}^{4}} \biggl[
\frac{\bar{\sigma
}^{2}}{K-\bar{\sigma}^{2}e^{-2ct^{\ast}}}-\frac{\bar{\sigma}^{2}}%
{K-\bar{\sigma}^{2}e^{-2cT}} \biggr]
\nonumber
\\
J_{4}\bigl(t^{\ast},T,M\bigr) & =&\frac{1}{2\bar{\sigma}^{2}}\log
\frac{K-\bar{\sigma
}^{2}e^{-2cT}}{K-\bar{\sigma}^{2}e^{-2ct^{\ast}}}.
\nonumber
\end{eqnarray*}

In particular, $\lim_{T\rightarrow\infty}\int_{0}^{T-t^{\ast}}r(\varepsilon
,\hat{x},M,t)\,dt<\infty$.
\end{lemma}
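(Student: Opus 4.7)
The proof is a direct calculation: expand $r(\varepsilon,\hat x,M,t)$ in the two formal variables $z,\hat x$, then integrate the resulting time-dependent coefficients from $0$ to $T-t^*$. Squaring out $(a^{M}(t)(z+\hat x e^{-c(t-T)}))^{2}z$ and collecting terms by powers of $z$ and $\hat x$ identifies the coefficients of $z^3$, $z^{2}\hat x$, $z\hat x^{2}$, and $\varepsilon z$ as linear combinations of the five scalar integrals
\begin{align*}
A_{1}&=\int_{0}^{T-t^{*}}\!a^{M}(t)\,dt, & A_{2}&=\int_{0}^{T-t^{*}}\!a^{M}(t)\,e^{-c(t-T)}\,dt,\\
A_{3}&=\int_{0}^{T-t^{*}}\!(a^{M}(t))^{2}\,dt, & A_{4}&=\int_{0}^{T-t^{*}}\!(a^{M}(t))^{2}\,e^{-c(t-T)}\,dt,\\
A_{5}&=\int_{0}^{T-t^{*}}\!(a^{M}(t))^{2}\,e^{-2c(t-T)}\,dt,
\end{align*}
with the relations $J_{4}=A_{1}$, $J_{3}=A_{5}$, $J_{1}=A_{1}+A_{3}$, and $J_{2}=A_{2}+2A_{4}$.

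The five scalar integrals split into two groups. For $A_{1},A_{3},A_{5}$, in which only even powers of $e^{c(t-T)}$ appear, I would substitute $u=e^{2c(t-T)}$, so that $dt=du/(2cu)$ and each integrand becomes a rational function of $u$ with denominator a power of $K-\bar{\sigma}^{2}u$. The antiderivatives are elementary: $A_{1}$ contributes $\log(K-\bar{\sigma}^{2}u)$; $A_{3}$ combines $\log(K-\bar{\sigma}^{2}u)$ with $K/(K-\bar{\sigma}^{2}u)$ after partial fractions of $u/(K-\bar{\sigma}^{2}u)^{2}$; and $A_{5}$ yields just $1/(K-\bar{\sigma}^{2}u)$. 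Evaluating at the endpoints $u=e^{-2cT}$ and $u=e^{-2ct^{*}}$ produces the closed forms of $J_{4}$, $J_{3}$, and the second summand of $J_{1}$.

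For $A_{2}$ and $A_{4}$, which retain an odd power of $e^{c(t-T)}$, I would instead substitute $w=e^{c(t-T)}$, obtaining $A_{2}=\int dw/(K-\bar{\sigma}^{2}w^{2})$ and $A_{4}=c\int w^{2}\,dw/(K-\bar{\sigma}^{2}w^{2})^{2}$. Factoring the denominator as $(\sqrt{K}-\bar{\sigma}w)(\sqrt{K}+\bar{\sigma}w)$ and applying partial fractions gives $A_{2}$ as the logarithm of $(1+\bar{\sigma}w/\sqrt{K})/(1-\bar{\sigma}w/\sqrt{K})$. For $A_{4}$ the identity $\frac{d}{dw}\bigl[1/(2\bar{\sigma}^{2}(K-\bar{\sigma}^{2}w^{2}))\bigr]=w/(K-\bar{\sigma}^{2}w^{2})^{2}$ lets me integrate by parts with $u=w$, reducing the calculation to a multiple of $A_{2}$ plus a purely algebraic boundary term. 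Adding $A_{2}+2A_{4}$ makes the $\log$-coefficient combine to the stated prefactor $(1-c/\bar{\sigma}^{2})/(\bar{\sigma}\sqrt{K})$ while the boundary pieces collapse to the algebraic tail appearing in $J_{2}$.

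Finiteness as $T\to\infty$ is then read off the closed forms: each $J_{i}$ depends on $T$ only through $e^{-cT}$ and $e^{-2cT}$, which tend to $0$, so $K-\bar{\sigma}^{2}e^{-2cT}\to K>0$ and $1\pm\bar{\sigma}e^{-cT}/\sqrt{K}\to 1$, and each $J_{i}(t^{*},T,M)$ therefore admits a finite limit. The main obstacle is not a mathematical one but bookkeeping: orchestrating the cancellations between the $A_{2}$ and $A_{4}$ logarithms so that the $(1-c/\bar{\sigma}^{2})$ prefactor emerges cleanly, and keeping consistent signs when reading off endpoint differences, particularly since $e^{-2cT}<e^{-2ct^{*}}$ flips the apparent orientation of several log arguments.
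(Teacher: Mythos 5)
Your proposal is correct and follows exactly the paper's route: the paper's entire proof is the single remark that the lemma ``follows by straightforward integration of $r(\varepsilon,\hat{x},M,t)$,'' and your expansion into the five integrals $A_{1},\dots,A_{5}$ (with the relations $J_{4}=A_{1}$, $J_{3}=A_{5}$, $J_{1}=A_{1}+A_{3}$, $J_{2}=A_{2}+2A_{4}$, the substitutions $u=e^{2c(t-T)}$ and $w=e^{c(t-T)}$, and the integration by parts for $A_{4}$) is precisely that computation. One caveat: carrying out $A_{2}+2A_{4}$ as you describe gives the logarithmic term of $J_{2}$ the coefficient $\tfrac{1}{2\bar{\sigma}\sqrt{K}}\left(1-\tfrac{c}{\bar{\sigma}^{2}}\right)$, i.e.\ half the prefactor displayed in the paper, so you should not claim the log-coefficients ``combine to the stated prefactor'' without flagging this apparent factor-of-two discrepancy (most likely a typo in the displayed $J_{2}$); it is immaterial for the uniform-in-$T$ finiteness conclusion, which only requires that each $J_{i}$ depend on $T$ through $e^{-cT}$ and $e^{-2cT}$ and hence have a finite limit as $T\to\infty$.
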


The proof of this lemma follows by straightforward integration of
$r(\varepsilon,\hat{x},M,t)$. Moreover, we note that
\[
J_{i}\bigl(t^{\ast},T,M\bigr)=O(1)\qquad \mbox{as } M\rightarrow
\infty\mbox{ for all }i=1,2,3,4
\]
and that the definition of $z=\hat{x}(c/M\bar{\sigma}^{2})^{1/2}/2$ implies
\begin{eqnarray}
&& \int_{0}^{T-t^{\ast}}r(\varepsilon,\hat{x},M,t)\,dt
=O\bigl( z^{3}+z^{2}\hat {x}+z\hat{x}^{2}+\varepsilon
z \bigr),\nonumber\\
\eqntext{\mbox{ uniformly in }T<\infty  \mbox{ as } M\rightarrow\infty.}
\end{eqnarray}

\setcounter{footnote}{3}
\begin{rem}\label{R:NonLinearRemark}
The following three lemmas are analogous to Lemmas \mbox{\ref{L:Region_0_y}--\ref{L:Region_y_hatx}}\footnote{Due to page restrictions of the journal and because of the similarity to the proofs for the linear case, Lemmas \ref{L:Region_0_y_NonLinear},
\ref{L:Region_hatx_A_NonLinear} and \ref{L:Region_y_hatx_NonLinear}
are presented here without proof. However,
complete proofs can be found in the extended version on \arxivurl{arXiv:1303.0450}.} from the linear case. The important difference between
the nonlinear and the linear case is that the statements involve approximation
errors, and the statements hold if one confines the linearized dynamics to a
small neighborhood of the equilibrium point as dictated by the sizes of
$t^{\ast}$ and $M$, or equivalently by $t^{\ast}$ and $z$. Due to the natural
scaling of $M$ in terms of $\varepsilon$ as indicated in Section~\ref{S:SimulationResultsLinearProblem}, as $\varepsilon$ gets smaller,
$z$~will get smaller and be confined to a sufficiently small region that the
statements of the lemmas are valid. However, the lemmas below are stated for
$z$ sufficiently small, without referencing to the natural scaling used in the
simulation algorithm.
\end{rem}

\begin{lemma}
\label{L:Region_0_y_NonLinear}
Assume that $(t,x)\in [0,T-t^{\ast}]\times[0,z]$, $\delta\geq\varepsilon$ and $\eta\leq1/2$. Then, for
sufficiently small $z$, we have up to an exponentially negligible term
\[
\bar{\mathcal{G}^{\varepsilon}}\bigl[U^{\delta,\eta},U^{\delta}
\bigr](t,x)\geq-(1-\eta) C_{0} r(\varepsilon,\hat{x},M,t).
\]
\end{lemma}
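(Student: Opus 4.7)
The plan is to parallel the proof of Lemma~\ref{L:Region_0_y} from the linear case, using the nonlinear operator $\bar{\mathcal{G}}^\varepsilon$ throughout, and to control the two new error sources: the operator discrepancy $R^\varepsilon[F_{2,\pm}^M]$ between the linear and nonlinear operators applied to the LQR pieces, and the cubic discrepancy $|\bar F_1-F_1|\leq C_1 z^3$ between the quasipotential-based subsolution and its quadratic approximation, both supplied by Lemma~\ref{L:BoundForApproximationOfOperatorG}.

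First I would establish the nonlinear analogue of (\ref{Eq:bound_for_moll}). The same algebraic manipulation used in the linear case gives
$$\bar{\mathcal{G}}^\varepsilon[U^{\delta,\eta},U^\delta](t,x)=(1-\eta)\bar{\mathcal{G}}^\varepsilon[U^\delta](t,x)+\tfrac{1}{2}(\eta-2\eta^2)|\sigma(x)DU^\delta(t,x)|^2,$$
and applying Lemma~\ref{Lem:mollification} with the three components $F_{2,+}^M,F_{2,-}^M,\bar F_1$ and the operator $\bar{\mathcal{G}}^\varepsilon$ yields
$$\bar{\mathcal{G}}^\varepsilon[U^\delta](t,x)\geq \tfrac{1}{2}(1-\varepsilon/\delta)\beta_0(t,x)+\rho_2^{M,+}\bar\gamma_{2}^{M,+}+\rho_2^{M,-}\bar\gamma_{2}^{M,-}+\rho_1\bar\gamma_1,$$
where $\beta_0(t,x)\geq 0$ by Jensen, $\bar\gamma_{2}^{M,\pm}\doteq\bar{\mathcal{G}}^\varepsilon[F_{2,\pm}^M]$ and $\bar\gamma_1\doteq\bar{\mathcal{G}}^\varepsilon[\bar F_1]$. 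The LQR identity $\mathcal{G}^\varepsilon[F_{2,\pm}^M]=\varepsilon\bar\sigma^2 a^M(t)\geq 0$ combined with Lemma~\ref{L:BoundForApproximationOfOperatorG} (whose symmetric counterpart for $F_{2,-}^M$ follows because $|DF_{2,-}^M|$ and $D^2F_{2,-}^M$ satisfy the same bounds on $[0,z]$) gives $\bar\gamma_{2}^{M,\pm}\geq \varepsilon\bar\sigma^2 a^M(t)-C_0 r(\varepsilon,\hat x,M,t)$. For $\bar F_1$, the variational identity $\bar{\mathbb H}(x,D\bar F_1(x))=0$ coming from the quasipotential reduces $\bar\gamma_1$ to $\tfrac{\varepsilon}{2}\sigma^2(x)D^2\bar F_1(x)$, which is bounded uniformly on $[-A,A]$.

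The decisive step, and the only real departure from the linear argument, is to show that $\rho_1(t,x;\delta)$ is exponentially negligible on $[0,T-t^{\ast}]\times[0,z]$ for sufficiently small $z$. The proof of Lemma~\ref{L:Region_0_y} produced a constant $c_1>0$, independent of $\varepsilon$ and $T$, with $F_1(x)-F_{2,+}^M(t,x)\geq c_1$ on this set; combining with $|\bar F_1(x)-F_1(x)|\leq C_1 z^3$ yields $\bar F_1(x)-F_{2,+}^M(t,x)\geq c_1/2$ whenever $C_1 z^3\leq c_1/2$, so that $\rho_1\leq e^{-c_1/(2\delta)}$ is exponentially negligible (using $\delta\geq\varepsilon$). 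Consequently $\rho_1\bar\gamma_1$ is exponentially negligible, while
$$\rho_2^{M,+}\bar\gamma_{2}^{M,+}+\rho_2^{M,-}\bar\gamma_{2}^{M,-}\geq (\rho_2^{M,+}+\rho_2^{M,-})\bigl(\varepsilon\bar\sigma^2 a^M(t)-C_0 r(\varepsilon,\hat x,M,t)\bigr)\geq -C_0 r(\varepsilon,\hat x,M,t),$$
using $\varepsilon\bar\sigma^2 a^M(t)\geq 0$ and $\rho_2^{M,+}+\rho_2^{M,-}\leq 1$. Since $\beta_0\geq 0$ and $\tfrac{1}{2}(\eta-2\eta^2)|\sigma DU^\delta|^2\geq 0$ for $\eta\leq 1/2$, multiplying through by the factor $(1-\eta)$ produces the claimed inequality. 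The threshold for "sufficiently small $z$" depends only on $c_1$ (from the linear analysis) and $C_1$ (determined by the nonlinear data through Lemma~\ref{L:BoundForApproximationOfOperatorG}), and not on $\varepsilon$ or $T$, so the bound is uniform in the required sense.
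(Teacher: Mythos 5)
Your overall route is the paper's: the exact identity $\bar{\mathcal{G}}^{\varepsilon}[U^{\delta,\eta},U^{\delta}]=(1-\eta)\bar{\mathcal{G}}^{\varepsilon}[U^{\delta}]+\tfrac{1}{2}(\eta-2\eta^{2})\left\vert \sigma DU^{\delta}\right\vert ^{2}$, the application of Lemma \ref{Lem:mollification} to the three pieces $F_{2,\pm}^{M}$ and $\bar{F}_{1}$, the bound $\bar{\mathcal{G}}^{\varepsilon}[F_{2,\pm}^{M}]\geq\varepsilon\bar{\sigma}^{2}a^{M}(t)-C_{0}r(\varepsilon,\hat{x},M,t)$ via Lemma \ref{L:BoundForApproximationOfOperatorG}, and the discarding of the nonnegative terms $\beta_{0}$ and $(\eta-2\eta^{2})\left\vert \sigma DU^{\delta}\right\vert^{2}$ all match the paper and are correct. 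The gap is in the step you yourself call decisive: exponential negligibility of $\rho_{1}$ on $[0,T-t^{\ast}]\times[0,z]$. You import from Lemma \ref{L:Region_0_y} a constant $c_{1}>0$ with $F_{1}-F_{2,+}^{M}\geq c_{1}$, treat it as fixed, and require $C_{1}z^{3}\leq c_{1}/2$, asserting that the resulting threshold on $z$ depends only on $c_{1}$ and $C_{1}$. But $c_{1}$ is not a fixed constant as $z$ is made small: since $z=\hat{x}(c/M\bar{\sigma}^{2})^{1/2}/2$, shrinking $z$ means changing $M$ (or $\hat{x}$), which changes $F_{2,+}^{M}$ and the region $[0,z]$, and with them $c_{1}$. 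Indeed the lower bound produced in the proof of Lemma \ref{L:Region_0_y} at $x=0$, namely $\hat{x}^{2}c\left[1/\bar{\sigma}^{2}-1/(\bar{\sigma}^{2}+c/M)\right]$, is itself of order $z^{2}$. So the condition $C_{1}z^{3}\leq c_{1}/2$ cannot be claimed to hold ``for all sufficiently small $z$'' without knowing how $c_{1}$ scales with $z$; as written, your threshold statement is circular.

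The missing content is exactly the quantitative comparison the paper carries out: a lower bound $F_{1}(x)-F_{2,+}^{M}(t,x)\geq 2\tfrac{c}{\bar{\sigma}^{2}}z^{2}$ uniformly on $[0,T-t^{\ast}]\times[0,z]$, obtained from (\ref{Eq:F1-F2}) together with $e^{-2ct^{\ast}}\leq c/M\bar{\sigma}^{2}$ and $M\geq4c/\bar{\sigma}^{2}$, i.e.\ an order-$z^{2}$ gap, to be set against the order-$z^{3}$ correction $\sup_{x\in[0,z]}|\bar{F}_{1}(x)-F_{1}(x)|\leq C_{1}z^{3}$ of Lemma \ref{L:BoundForApproximationOfOperatorG}. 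Only with this does ``sufficiently small $z$'' acquire meaning: the gap $\bar{F}_{1}-F_{2,+}^{M}\gtrsim\tfrac{c}{\bar{\sigma}^{2}}z^{2}$ persists, $\rho_{1}$ is exponentially negligible (for $\delta$ comparable to $\varepsilon$, as in the linear case), and the rest of your assembly then yields $\bar{\mathcal{G}}^{\varepsilon}[U^{\delta,\eta},U^{\delta}]\geq-(1-\eta)C_{0}r(\varepsilon,\hat{x},M,t)$. So the conclusion is right and the repair is a short explicit estimate, but without it the argument does not close: had the linear gap degenerated faster than $z^{3}$, your reasoning would give nothing.
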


\begin{lemma}
\label{L:Region_hatx_A_NonLinear}
Assume that $(t,x)\in [0,T-t^{\ast
}]\times[ H,A]$, and assume $\delta\geq\varepsilon$. Define
\[
\eta_{0}(\varepsilon)  \doteq\sup_{x\in[-A,-H]\cup[H,A]}
\frac
{-\varepsilon\sigma^{2}(x)D (  b(x)\sigma^{-2}(x) )  }{-\varepsilon
\sigma^{2}(x)D (  b(x)\sigma^{-2}(x) )  +b^{2}(x)\sigma^{-2}(x)}.
\]
Let $\varepsilon>0$ be sufficiently small such that $\eta_{0}(\varepsilon
)<1/4$, and consider $\eta\in(\eta_{0}(\varepsilon), 1/4)$. Then, for
sufficiently small $z$ and up to exponentially negligible terms,
\[
\bar{\mathcal{G}}^{\varepsilon}\bigl[U^{\delta,\eta},U^{\delta}
\bigr](t,x)\geq0.
\]
\end{lemma}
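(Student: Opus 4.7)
The plan follows the structure of the linear Lemma \ref{L:Region_hatx_A}, adapted to the nonlinear operator $\bar{\mathcal{G}}^{\varepsilon}$ and to the genuine quasipotential-based subsolution $\bar{F}_{1}$ rather than its quadratic approximation $F_{1}$. First, I would verify that on $[0,T-t^{\ast}]\times[H,A]$ (and symmetrically on $[-A,-H]$) the weights $\rho_{2}^{M,\pm}(t,x;\delta)$ are exponentially negligible. This requires a uniform lower bound $F_{2,+}^{M}(t,x)-\bar{F}_{1}(x)\geq c_{3}>0$, which I would obtain by splitting
\begin{equation*}
F_{2,+}^{M}(t,x)-\bar{F}_{1}(x)=[F_{2,+}^{M}(t,x)-F_{1}(x)]+[F_{1}(x)-\bar{F}_{1}(x)].
\end{equation*}
The first bracket is bounded below by the positive constant $c_{2}$ produced in the proof of Lemma \ref{L:Region_hatx_A}, while the second is $O(z^{3})$ near the rest point by Lemma \ref{L:BoundForApproximationOfOperatorG}; away from the rest point I would invoke the nonlinear geometry (convexity of $x\mapsto F_{2,+}^{M}(t,x)$ with minimum value $F_{1}(\hat{x})$, and the vanishing of $\bar{F}_{1}$ at the exit set through the definition of $L$) to conclude the sum stays positive when $z$ and $\hat{x}$ are small.

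Second, I would apply Lemma \ref{Lem:mollification} to $\bar{\mathcal{G}}^{\varepsilon}[U^{\delta}]$; since that lemma is stated for generic $(b,\sigma)$, it directly yields
\begin{equation*}
\bar{\mathcal{G}}^{\varepsilon}[U^{\delta}](t,x)\geq \sum_{i=1}^{3}\rho_{i}(t,x;\delta)\,\bar{\mathcal{G}}^{\varepsilon}[\tilde{U}_{i}](t,x)
\end{equation*}
with $\tilde{U}_{1}=F_{2,+}^{M}$, $\tilde{U}_{2}=F_{2,-}^{M}$, $\tilde{U}_{3}=\bar{F}_{1}$. Combining this with the purely algebraic identity (derived as in the linear case from $U^{\delta,\eta}=(1-\eta)U^{\delta}$)
\begin{equation*}
\bar{\mathcal{G}}^{\varepsilon}[U^{\delta,\eta},U^{\delta}](t,x)=(1-\eta)\bar{\mathcal{G}}^{\varepsilon}[U^{\delta}](t,x)+\tfrac{1}{2}(\eta-2\eta^{2})|\sigma(x)DU^{\delta}(t,x)|^{2},
\end{equation*}
and using the first step so that $\rho_{2}^{M,\pm}$ are exponentially negligible and $\rho_{1}=1$ up to such terms, gives
\begin{equation*}
\bar{\mathcal{G}}^{\varepsilon}[U^{\delta,\eta},U^{\delta}](t,x)\geq(1-\eta)\bar{\mathcal{G}}^{\varepsilon}[\bar{F}_{1}](x)+\tfrac{1}{2}(\eta-2\eta^{2})|\sigma(x)D\bar{F}_{1}(x)|^{2}
\end{equation*}
up to exponentially negligible terms.

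Third, the defining property of the quasipotential, $\bar{\mathbb{H}}(x,D\bar{F}_{1}(x))=0$, together with the explicit formula $D\bar{F}_{1}(x)=2b(x)/\sigma^{2}(x)$, yields $\bar{\mathcal{G}}^{\varepsilon}[\bar{F}_{1}](x)=\varepsilon\sigma^{2}(x)D(b(x)/\sigma^{2}(x))$ and $|\sigma(x)D\bar{F}_{1}(x)|^{2}=4b^{2}(x)/\sigma^{2}(x)$. Substituting reduces non-negativity to the pointwise inequality
\begin{equation*}
(1-\eta)\varepsilon\sigma^{2}(x)D\!\left(\tfrac{b(x)}{\sigma^{2}(x)}\right)+2(\eta-2\eta^{2})\tfrac{b^{2}(x)}{\sigma^{2}(x)}\geq 0.
\end{equation*}
For $\eta\leq 1/4$ one has $2(\eta-2\eta^{2})\geq\eta$, and a direct manipulation shows that $\eta>\eta_{0}(\varepsilon)$ is exactly equivalent to $\eta\,b^{2}(x)/\sigma^{2}(x)>-(1-\eta)\varepsilon\sigma^{2}(x)D(b/\sigma^{2})(x)$ uniformly on $[-A,-H]\cup[H,A]$, which closes the argument.

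The main obstacle is the separation $F_{2,+}^{M}-\bar{F}_{1}\geq c_{3}>0$ in the first step on the outer portion of $[H,A]$: Lemma \ref{L:BoundForApproximationOfOperatorG} only controls $|F_{1}-\bar{F}_{1}|$ in the small neighborhood $[0,z]$, so one cannot simply transplant the linear bound globally. The fix relies on the nonlinear geometry described above and may require an additional smallness assumption on $\hat{x}$ that is compatible with the scalings of Subsection \ref{S:SimulationResultsLinearProblem}.
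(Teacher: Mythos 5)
Your Steps 2 and 3 coincide with the paper's argument: Lemma \ref{Lem:mollification} is indeed stated for general $(b,\sigma)$ and applies to $\bar{\mathcal{G}}^{\varepsilon}$, and once $\rho_{2}^{M,\pm}$ are negligible the problem reduces, exactly as in the paper, to the pointwise inequality $\varepsilon(1-\eta)\sigma^{2}(x)D\left(b(x)\sigma^{-2}(x)\right)+2\eta(1-2\eta)b^{2}(x)\sigma^{-2}(x)\geq0$, which holds for $\eta\in(\eta_{0}(\varepsilon),1/4)$; your manipulation of $\eta_{0}$ is correct.

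The genuine gap is your Step 1, and the fix you sketch does not close it. Your decomposition $F_{2,+}^{M}-\bar{F}_{1}=[F_{2,+}^{M}-F_{1}]+[F_{1}-\bar{F}_{1}]$ is used pointwise on all of $[H,A]$, but the margin $c_{2}$ from Lemma \ref{L:Region_hatx_A} is only of order $z^{2}$, while $F_{1}(x)-\bar{F}_{1}(x)$ on $[H,A]$ is the full linearization error, of order $|x|^{3}$ up to $A^{3}$ and of either sign, so it can swamp $c_{2}$; Lemma \ref{L:BoundForApproximationOfOperatorG} controls this difference only on $[0,z]$ (its proof extends to $[0,H]$, but not to $[H,A]$). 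The "nonlinear geometry" you invoke also fails under the natural scaling: the minimum value of $F_{2,+}^{M}(t,\cdot)$ is $F_{1}(\hat{x})=2L-\tfrac{c}{\bar{\sigma}^{2}}\hat{x}^{2}=2L-4Mz^{2}$, which drops \emph{below} $\bar{F}_{1}(H)\approx 2L-100\tfrac{c}{\bar{\sigma}^{2}}z^{2}$ as soon as $M>25c/\bar{\sigma}^{2}$ (and shrinking $\hat{x}$ does not help, since $H=10z$ scales with $\hat{x}$), so a global min-versus-max comparison, and the vanishing of $\bar{F}_{1}$ only at the exit point, cannot give a uniform positive separation on $[H,A]$. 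The paper avoids this by anchoring the comparison at the single point $x=H$: it writes $F_{2,+}^{M}(t,x)-\bar{F}_{1}(x)=\left[F_{2,+}^{M}(t,x)-F_{1}(H)\right]+\left[F_{1}(H)-\bar{F}_{1}(H)\right]+\left[\bar{F}_{1}(H)-\bar{F}_{1}(x)\right]$, so the approximation error is evaluated only at $H$, where it is $O(H^{3})$ and dominated by the $O(z^{2})$ separation $F_{2,+}^{M}(t,H)-F_{1}(H)$ from the linear lemma; the third bracket is nonnegative because $b<0$ on $(x_{0},A]$ makes $\bar{F}_{1}$ nonincreasing there, and the bound propagates to all $x\in[H,A]$ using the convexity of $x\mapsto F_{2,+}^{M}(t,x)$ together with the fact that its vertex $\hat{x}e^{-c(t-T)}\geq\hat{x}e^{ct^{\ast}}$ is pushed far to the right for $t\leq T-t^{\ast}$, so that $DF_{2,+}^{M}$ contributes only a negligible decrease on $[H,A]$. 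You would need this (or an equivalent monotonicity-of-$F_{2,+}^{M}-\bar{F}_{1}$ argument on $[H,A]$) to make Step 1 rigorous.
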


\begin{lemma}
\label{L:Region_y_hatx_NonLinear}
Assume that $(t,x)\in [0,T-t^{\ast
}]\times [ z,H]$ and that $M\geq5c/\bar{\sigma}^{2}$. Set $\delta
=2\varepsilon$ and $\sigma_{\ast}^{2}=\sup_{x\in [-A,A]}\sigma^{2}(x)$.
Then, for sufficiently small $z$ we have up to an exponentially negligible
term
\begin{eqnarray*}
&& \bar{\mathcal{G}}^{\varepsilon}\bigl[U^{\delta,\eta},U^{\delta}
\bigr](t,x) \\
&&\qquad \geq  \frac{\sigma_{\ast}^{2}}{2} \biggl[ \frac{1}{\bar{\sigma}^{2}} \biggl(
\frac{c^{2}\eta}{2\bar{\sigma}^{2}} \bigl( z-\hat{x}e^{c(t-T)} \bigr) ^{2}-2
\varepsilon c \biggr) +\Gamma(t,z,H,\hat{x},\varepsilon,\eta,T) \biggr] \wedge0
\\
&&\qquad\quad{}-C_{0}(1-\eta)r(\varepsilon,\hat{x},M,t),
\end{eqnarray*}
where
\begin{eqnarray}
&& \Gamma(t,z,H,\hat{x},\varepsilon,\eta,T)\nonumber\\
 &&\label{Eq:defofGamma}\qquad =  \inf_{x\in [ z,H]}
\biggl[ \frac{c\eta}{2\bar{\sigma}^{2}} \bigl( D\bar{F}_{1}(x)-DF_{1}(x)
\bigr) \bigl( z-\hat{x}e^{c(t-T)} \bigr)
\\
&&\nonumber\hspace*{34pt}\qquad\quad{}+\frac{1}{8}\eta \bigl( D\bar{F}_{1}(x)-DF_{1}%
(x)
\bigr) ^{2}+2\varepsilon \biggl( \frac{c}{\bar{\sigma}^{2}}+D \biggl(
\frac{b(x)}{\sigma^{2}(x)} \biggr) \biggr) \biggr]
\end{eqnarray}
and $\sigma^{2}(x)\geq\sigma_{1}^{2}>0$ for all $x\in\mathbb{R}$.
\end{lemma}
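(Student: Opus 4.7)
The proof should parallel that of Lemma \ref{L:Region_y_hatx} for the linear case, with three modifications: replace $\mathcal{G}^{\varepsilon}$ by $\bar{\mathcal{G}}^{\varepsilon}$ and $F_1$ by $\bar{F}_1$; absorb the approximation error $R^{\varepsilon}[F_{2,\pm}^M]$ using Lemma \ref{L:BoundForApproximationOfOperatorG}; and track the additional terms generated by $D\bar F_1 - DF_1$ and $\sigma^2(x)-\bar\sigma^2$. The plan is the following.

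First, I would argue that in the region $[z,H]$ the terms associated with $F_{2,-}^M$ are exponentially negligible, exactly as in Lemma \ref{L:Region_y_hatx}; the proof carries over because $F_{2,-}^M(t,x)-\bar F_1(x)$ remains bounded below by a positive constant for $x\ge z$ and $z$ small, by the decomposition $F_{2,-}^M-\bar F_1 = (F_{2,-}^M-F_1)+(F_1-\bar F_1)$ together with Lemma \ref{L:BoundForApproximationOfOperatorG}. Next I apply Lemma \ref{Lem:mollification} to $U^{\delta,\eta}=(1-\eta)U^{\delta}$ with $\delta=2\varepsilon$, obtaining the nonlinear analog of (\ref{Eq:bound_for_moll}) in which the weights $\rho_2^{M,+}$ and $\rho_1$ multiply $\bar{\mathcal{G}}^{\varepsilon}[F_{2,+}^M]$ and $\bar{\mathcal{G}}^{\varepsilon}[\bar F_1]$ respectively, and the gradient/cross term is multiplied by $\sigma^2(x)$ rather than $\bar\sigma^2$. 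Using $\bar{\mathcal{G}}^{\varepsilon}[F_{2,+}^M]=\gamma_2^M(t)+R^{\varepsilon}[F_{2,+}^M]$ and the identity $\bar{\mathcal{G}}^{\varepsilon}[\bar F_1](x)=\varepsilon\sigma^2(x)D(b(x)/\sigma^2(x))$ (which follows from $\bar{\mathbb H}(x,D\bar F_1)=0$ and $D\bar F_1=2b/\sigma^2$), the bound becomes the linear-case bound plus (i) the approximation error controlled by $-C_0(1-\eta)r(\varepsilon,\hat x,M,t)$ via Lemma \ref{L:BoundForApproximationOfOperatorG}, and (ii) an extra term $\varepsilon\sigma^2(x)[c/\bar\sigma^2 + D(b(x)/\sigma^2(x))]$ which vanishes at $x=0$ and generates the $2\varepsilon(c/\bar\sigma^2 + D(b/\sigma^2))$ contribution to $\Gamma$.

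Next I split into the same two cases as in the linear proof. In Case I ($\rho_1(t,x)>1/2$) I expand
\[
|DF_{2,+}^M - D\bar F_1|^2 = |DF_{2,+}^M - DF_1|^2 + 2(DF_{2,+}^M - DF_1)(DF_1 - D\bar F_1) + |DF_1 - D\bar F_1|^2,
\]
and verify that the sign arguments for $DF_{2,+}^M \le 0$ and $DF_1 - DF_{2,+}^M \le 0$ of Lemma \ref{L:Region_y_hatx} still go through uniformly in $t\in[0,T-t^\ast]$, using that $M\ge 5c/\bar\sigma^2$ (slightly strengthened from $4c/\bar\sigma^2$) gives enough slack to absorb the $O(z^2)$ perturbation of gradients from the nonlinearity for $z$ small. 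The cross term in the expansion is precisely the source of $\frac{c\eta}{2\bar\sigma^2}(D\bar F_1-DF_1)(z-\hat x e^{c(t-T)})$ after using the linear lower bound $DF_{2,+}^M - DF_1\ge (2c/\bar\sigma^2)(z-\hat x e^{c(t-T)})$, and the last term in the expansion gives $\frac18\eta(D\bar F_1-DF_1)^2$. In Case II ($\rho_1\le 1/2$) I reuse the identity $\beta_0(t,x)=\bar\sigma^2\rho_1(1-\rho_1)(DF_1-DF_{2,+}^M)^2$ (from the linear proof) and handle the extra $D\bar F_1-DF_1$ contributions the same way; since this case only contributes when the expression is negative, it is dominated by the Case I bound combined with $\wedge 0$. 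Factoring $\sigma^2(x)\le\sigma_\ast^2$ from the gradient-product terms yields the overall $\sigma_\ast^2/2$ prefactor.

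The main obstacle is bookkeeping in the sign/monotonicity step of Case I: one must verify that the two key inequalities $DF_{2,+}^M\le 0$ and $DF_1-DF_{2,+}^M\le 0$ on $[z,H]$ survive the replacement $F_1\rightsquigarrow\bar F_1$ and the resulting $O(z^2)$ shift of the gradient. This is where the strengthened hypothesis $M\ge 5c/\bar\sigma^2$ (in place of $4c/\bar\sigma^2$) is used: it provides a strictly positive margin in the bounds obtained in the proofs of Lemmas \ref{L:Region_y_hatx} and \ref{L:Region_hatx_A_NonLinear}, which can then absorb the nonlinear correction when $z$ is sufficiently small. Once these sign conditions are secured, the rest of the argument is a direct transcription of the linear proof, with the $\Gamma$ functional collecting precisely the three kinds of nonlinear corrections identified above.
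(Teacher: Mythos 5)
Your plan is correct and follows essentially the same route as the paper's proof: drop the $F_{2,-}^{M}$ terms as exponentially negligible, apply Lemma \ref{Lem:mollification} with the $\sigma^{2}(x)$ weighting, absorb $R^{\varepsilon}[F_{2,+}^{M}]$ via Lemma \ref{L:BoundForApproximationOfOperatorG}, split on $\rho_{1}\gtrless 1/2$, and secure the sign conditions $DF_{2,+}^{M}\leq0$ and $D\bar{F}_{1}-DF_{2,+}^{M}\leq0$ by using $M\geq5c/\bar{\sigma}^{2}$ so the strictly negative $O(z)$ linear term dominates the $O(z^{2})$ correction for small $z$. The only difference is cosmetic bookkeeping (you expand the squared gradient difference already in Case I, while the paper does it when assembling the final bound), so the two arguments coincide in substance.
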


The performance bound is then summarized in the following theorem. The proof
of Theorem \ref{T:MainBoundNonLinear} is the same as the proof of Theorem
\ref{T:MainBoundLinear} for the linear case, so it will not be repeated here.

\begin{theorem}
\label{T:MainBoundNonLinear}
Assume $\delta=2\varepsilon$, $\eta\in(\eta
_{0}(\varepsilon),1/4)$, $z^{2}c\eta\geq8\varepsilon\bar{\sigma}^{2}$ and that
$M\geq5c/\bar{\sigma}^{2}$, where $\eta_{0}(\varepsilon)$ is as in Lemma
\ref{L:Region_hatx_A_NonLinear}. Set $\sigma_{\ast}^{2}=\sup_{x\in
[-A,A]}\sigma^{2}(x)$. Let $\bar{u}$ be the control based on the
function $\bar{U}^{\delta}$ defined via (\ref{Eq:NonLinearChangeOfMeasure}),
that is, $\bar{u}(t,x)=-\sigma(x)D\bar{U}^{\delta}(t,x)$. Then, up to an
exponentially negligible term, for $\varepsilon\in(0,\varepsilon_{0})$ such
that $\eta_{0}(\varepsilon_{0})=1/4$ and for $z$ sufficiently small, we have
\begin{eqnarray*}
&& -\varepsilon\log Q^{\varepsilon}(0,0;\bar{u})\\
&&\qquad \geq  2 \biggl[
I_{1}%
\bigl(\varepsilon,\eta,T,t^{\ast},\hat{x},M
\bigr)-(1-\eta)C_{0}\int_{0}^{T-t^{\ast}%
}r(
\varepsilon,\hat{x},M,t)\,dt \biggr] 1_{ \{  T\geq t^{\ast} \}}
\\
&&\qquad\quad{}+2I_{2}(\varepsilon,T)1_{ \{  T<t^{\ast} \} }.
\end{eqnarray*}
Here
\begin{eqnarray*}
&& I_{1}\bigl(\varepsilon,\eta,T,t^{\ast},\hat{x},M\bigr)\\
&&\qquad= (1-
\eta)\bar{U}^{\delta
}(0,0)
\\
&&\qquad\quad{}+\frac{\sigma_{\ast}^{2}}{2}\int_{J} \biggl[ \frac{1}{\bar{\sigma}^{2}%
}
\biggl( \frac{c^{2}\eta}{2\bar{\sigma}^{2}} \bigl( z-\hat{x}e^{c(s-T)}%
 \bigr)
^{2}-2\varepsilon c \biggr) +\Gamma(s,z,H,\hat{x},\varepsilon,\eta,T)
\biggr] \,ds\\
&&\qquad\quad{}-t^{\ast}c^{\ast}\varepsilon,
\end{eqnarray*}
with $J$ the times in $[0,T-t^{\ast}]$ where the integrand is negative,
$\Gamma(s,z,H,\hat{x},\varepsilon,\eta,T)$ as in (\ref{Eq:defofGamma}),
\[
\bar{U}^{\delta}(0,0)\geq\frac{c}{K-\bar{\sigma}^{2}e^{-2cT}}\hat{x}%
^{2}+
\biggl( 2L-\frac{c}{\bar{\sigma}^{2}}\hat{x}^{2} \biggr) -\delta\log3
\]
and
\[
I_{2}(\varepsilon,T)=2L-c^{\ast}T\varepsilon \quad \mbox{and}\quad
c^{\ast
}=\sup_{x\in [-A,A]}\sigma^{2}(x)\bigl\llvert
D\bigl(b(x)/\sigma^{2}
(x)\bigr)\bigr\rrvert >0.
\]
\end{theorem}

The bound of Theorem \ref{T:MainBoundNonLinear} takes a complicated form, but
as in the linear case, the performance does not degrade as $T\rightarrow
\infty$. This was also reflected by the simulation data in Section~\ref{SS:SimulationNonLinearProblem}. Let us now justify this claim.

Notice that by Lemma \ref{L:IntegralOfCorrectionTerm} the term $\int_{0}^{T-t^{\ast}}r(\varepsilon,\hat{x},M,t)\,dt$ is finite, uniformly in $T$.
Next we need to argue, similar to the linear problem, that when $T-s$ is
sufficiently large and $z$ is sufficiently small, the integrand of the second
term in the definition of $I_{1}(\varepsilon,\eta,T,t^{\ast},\hat{x},M)$ is in
fact positive. Let us denote the integrand of the second term by
\[
B(s,z,H,\hat{x},\varepsilon,\eta,T)+\Gamma(s,z,H,\hat{x},\varepsilon,\eta,T),
\]
where $B(s,z,H,\hat{x},\varepsilon,\eta,T)=\frac{1}{\bar{\sigma}^{2}} (
\frac{c^{2}\eta}{2\bar{\sigma}^{2}} (  z-\hat{x}e^{c(s-T)} )
^{2}-2\varepsilon c )  $. The term $\Gamma(s,z,\break H, \hat{x},\varepsilon
,\eta,T)$ is composed by three terms and we shall argue below they are
dominated (even when they are negative), by the second term in the definition
$B(s,z,H,\hat{x},\varepsilon,\eta,T)$, that is, by $2\varepsilon c/\bar{\sigma
}^{2}$ when $z$ is small enough. This means, as in the case of the linear
problem, that when the integral will be finite uniformly in~$T$. Let us now
support the claim just made. It is easy to see that for $x\in [ z,10z]$,
the first term in the definition of $\Gamma$ can be either positive or
negative, but it is of order $\eta z^{3}$. The second term in the definition
of $\Gamma$ is positive, and for $x\in [ z,10z]$, it is of order $\eta
z^{4}$. Finally, the third term in the definition of $\Gamma$ may be positive
or negative, but in either case, it will be of order $\varepsilon z$ for
$x\in [ z,10z]$. Therefore, for $z$ sufficiently small, $\Gamma$ is
dominated by the second term in the definition $B$, that is, by $2\varepsilon
c/\bar{\sigma}^{2}$. Hence, the argument that was used for the linear problem
in order to show that the integrand of the second term in the definition of
$I_{1}(\varepsilon,\eta,T,t^{\ast},\hat{x},M)$ is in fact positive when $T-s$
is large enough, allows us to reach the same conclusion here as well, given
that $z$ is chosen sufficiently small.

\begin{appendix}
\section*{Appendix}\label{app}

In this appendix we provide proofs of some auxiliary lemmas used in the main
body of the manuscript.

\begin{pf*}{Proof of Lemma \ref{Lem:mollification}}
Without loss of generality, we can
restrict attention to $n=2$. We have
\begin{eqnarray*}
\partial_{t}U^{\delta}(t,x) &=& \rho_{1}(t,x;\delta)
\partial_{t}\tilde{U}%
_{1}(t,x)+
\rho_{2}(t,x;\delta)\partial_{t}\tilde{U}_{2}(t,x),
\\
DU^{\delta}(t,x) &=& \rho_{1}(t,x;\delta)D\tilde{U}_{1}(t,x)+
\rho_{2}%
(t,x;\delta)D\tilde{U}_{2}(t,x)
\end{eqnarray*}
and
\begin{eqnarray*}
D^{2}U^{\delta}(t,x) & =& \frac{1}{\delta}DU^{\delta}(t,x)^{2}-
\rho _{1}(t,x;\delta) \biggl[ \frac{1}{\delta}D\tilde{U}_{1}(t,x)^{2}-D^{2}
\tilde {U}_{1}(t,x) \biggr]
\\
&&{}-\rho_{2}(t,x;\delta) \biggl[ \frac{1}{\delta}D
\tilde{U}_{2}%
(t,x)^{2}-D^{2}
\tilde{U}_{2}(t,x) \biggr].
\end{eqnarray*}
Omitting function arguments for notational convenience, for $\varepsilon
\leq\delta$,
\begin{eqnarray*}
&& \partial_{t}U^{\delta}+ \biggl[ DU^{\delta}b-
\frac{1}{2}\bigl\llvert \sigma DU^{\delta}\bigr\rrvert ^{2}
\biggr] +\frac{\varepsilon}{2}\alpha D^{2}U^{\delta}
\\
&&\qquad =\rho_{1}\,\partial_{t}\tilde{U}_{1}+
\rho_{2}\,\partial_{t}\tilde{U}_{2}%
+
\rho_{1}D\tilde{U}_{1}b+\rho_{2}D
\tilde{U}_{2}b-\frac{1}{2}\bigl\llvert \sigma (
\rho_{1}D\tilde{U}_{1}+\rho_{2}D
\tilde{U}_{2} ) \bigr\rrvert ^{2}
\\
&&\qquad\quad {}+\frac{\varepsilon}{2}\frac{1}{\delta}\bigl\llvert \sigma (
\rho_{1}D\tilde{U}_{1}+\rho_{2}D
\tilde{U}_{2} ) \bigr\rrvert ^{2}%
-
\frac{\varepsilon}{2}\frac{1}{\delta} \bigl[ \rho_{1}\alpha(D
\tilde{U}%
_{1})^{2}+\rho_{2}\alpha(D
\tilde{U}_{2})^{2} \bigr]
\\
&&\qquad\quad{}+\frac{\varepsilon}{2} \bigl[ \rho_{1}\alpha D^{2}
\tilde{U}_{1}%
+\rho_{2}\alpha D^{2}
\tilde{U}_{2} \bigr]
\\
&&\qquad =\rho_{1} \biggl[ \partial_{t}\tilde{U}_{1}+D
\tilde{U}_{1}b-\frac{1}%
{2}\llvert \sigma D\tilde{U}_{1}
\rrvert ^{2}+\frac{\varepsilon}%
{2}\alpha D^{2}
\tilde{U}_{1} \biggr]
\\
&&\qquad\quad {}+\rho_{2} \biggl[ \partial_{t}\tilde{U}_{2}+D
\tilde{U}_{2}b-\frac{1}%
{2}\llvert \sigma D\tilde{U}_{2}
\rrvert ^{2}+\frac{\varepsilon}%
{2}\alpha D^{2}
\tilde{U}_{2} \biggr]
\\
&&\qquad\quad{}+\frac{1}{2} \biggl( 1-\frac{\varepsilon}{\delta} \biggr) \bigl[
\rho_{1}\llvert \sigma D\tilde{U}_{1}\rrvert ^{2}+
\rho_{2}\llvert \sigma D\tilde{U}_{2}\rrvert ^{2}-
\bigl\llvert \sigma ( \rho_{1}%
D\tilde{U}_{1}+
\rho_{2}D\tilde{U}_{2} ) \bigr\rrvert ^{2} \bigr]
\\
&&\qquad \geq\frac{1}{2} \biggl( 1-\frac{\varepsilon}{\delta} \biggr) \bigl[
\rho_{1}\llvert \sigma D\tilde{U}_{1}\rrvert ^{2}+
\rho_{2}\llvert \sigma D\tilde{U}_{2}\rrvert ^{2}-
\llvert \rho_{1}\sigma D\tilde{U}%
_{1}+
\rho_{2}\sigma D\tilde{U}_{2}\rrvert ^{2} \bigr] \\
&& \qquad\quad{}+
\rho_{1}\gamma _{1}+\rho_{2}\gamma_{2}
\\
&&\qquad \geq\rho_{1}\gamma_{1}+\rho_{2}
\gamma_{2},
\end{eqnarray*}
where the last line is due to the convexity of $f(x)=x^{2}$.
\end{pf*}

\setcounter{proposition}{0}
\begin{lemma}
\label{L:GeneralBound}
Let $U(t,x)$ and $W(t,x)$ be two continuously
differentiable functions from $[0,T]\times\mathbb{R}\rightarrow\mathbb{R}$. Assume
that $b$ and $\sigma$ are Lipschitz continuous. Set $\bar{u}(t,x)=-\sigma
(x)DU(t,x)$, $v\in\mathcal{A}$, and let $\hat{X}^{\varepsilon}(s)$ solve
\begin{eqnarray}
d\hat{X}^{\varepsilon}(s) &=& b\bigl(\hat{X}^{\varepsilon}(s)\bigr)\,ds+\sigma\bigl(
\hat {X}^{\varepsilon}(s)\bigr) \bigl[ \sqrt{\varepsilon}\,dB(s)-\bigl[\bar{u}
\bigl(s,\hat {X}^{\varepsilon}(s)\bigr)-v(s)\bigr]\,ds \bigr],\nonumber\\
\eqntext{\hat{X}^{\varepsilon}(0)=y.}
\end{eqnarray}
Then for every $\varepsilon>0$, $v\in\mathcal{A}$ and stopping time $\hat
{\tau}^{\varepsilon}\leq T$, we have, with probability~$1$,
\begin{eqnarray*}
&& \int_{0}^{\hat{\tau}^{\varepsilon}} \biggl[ \frac{1}{2}v(s)^{2}-
\bar {u}\bigl(s,\hat{X}^{\varepsilon}(s)\bigr)^{2} \biggr] \,ds
\\
&&\qquad \geq2W(0,y)-2W\bigl(\hat{\tau}^{\varepsilon},\hat{X}^{\varepsilon}%
\bigl(\hat{\tau}^{\varepsilon}\bigr)\bigr)+2\sqrt{\varepsilon}\int
_{0}^{\hat{\tau
}^{\varepsilon}}DW\bigl(s,\hat{X}^{\varepsilon}(s)\bigr)
\sigma\bigl(\hat{X}^{\varepsilon
}(s)\bigr)\,dB(s)
\\
&&\qquad\quad {}+2\int_{0}^{\hat{\tau}^{\varepsilon}}\mathcal{G}^{\varepsilon
}[W]
\bigl(s,\hat{X}^{\varepsilon}(s)\bigr)\,ds\\
&& \qquad\quad{}-\int_{0}^{\hat{\tau}^{\varepsilon}%
}
\bigl\llvert \sigma\bigl(\hat{X}^{\varepsilon}(s)\bigr) \bigl( DW\bigl(s,
\hat{X}^{\varepsilon
}(s)\bigr)-DU\bigl(s,\hat{X}^{\varepsilon}(s)\bigr) \bigr)
\bigr\rrvert ^{2}\,ds.
\end{eqnarray*}
\end{lemma}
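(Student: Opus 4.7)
\textbf{Proof proposal for Lemma \ref{L:GeneralBound}.}

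The plan is to apply It\^{o}'s formula to $W(s,\hat{X}^{\varepsilon}(s))$ on $[0,\hat{\tau}^{\varepsilon}]$, reorganize the drift terms using the definition of $\mathcal{G}^{\varepsilon}[W]$, and show that after substitution the claimed inequality reduces to a pointwise algebraic inequality that is in fact a perfect square. No probabilistic work beyond It\^{o} is needed; this is why the statement holds with probability $1$ rather than just in expectation.

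First, apply It\^{o}'s formula to $W(s,\hat{X}^{\varepsilon}(s))$. The drift of $\hat{X}^{\varepsilon}$ contributes $DW(s,\hat{X}^{\varepsilon}(s))\bigl[b(\hat{X}^{\varepsilon}(s)) + \sigma(\hat{X}^{\varepsilon}(s))(v(s) - \bar{u}(s,\hat{X}^{\varepsilon}(s)))\bigr]$, the quadratic variation contributes $\tfrac{\varepsilon}{2}\alpha(\hat{X}^{\varepsilon}(s))D^{2}W(s,\hat{X}^{\varepsilon}(s))$, and there is the martingale term $\sqrt{\varepsilon}DW\cdot\sigma\,dB$. Using the definition of $\mathcal{G}^{\varepsilon}[W]$ to rewrite the first two deterministic pieces as $\mathcal{G}^{\varepsilon}[W] + \tfrac{1}{2}|\sigma DW|^{2}$, integrating up to $\hat{\tau}^{\varepsilon}$, and multiplying by $2$ yields
\begin{align*}
2W(\hat{\tau}^{\varepsilon},\hat{X}^{\varepsilon}(\hat{\tau}^{\varepsilon})) - 2W(0,y)
&= 2\int_{0}^{\hat{\tau}^{\varepsilon}}\!\Bigl[\mathcal{G}^{\varepsilon}[W] + \tfrac{1}{2}|\sigma DW|^{2} + DW\,\sigma(v-\bar{u})\Bigr](s,\hat{X}^{\varepsilon}(s))\,ds \\
&\quad + 2\sqrt{\varepsilon}\int_{0}^{\hat{\tau}^{\varepsilon}}DW(s,\hat{X}^{\varepsilon}(s))\sigma(\hat{X}^{\varepsilon}(s))\,dB(s).
\end{align*}

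Next, substitute this identity into the right-hand side of the target inequality and cancel the stochastic integral and the $2\int \mathcal{G}^{\varepsilon}[W]\,ds$ term that appear on both sides. Using $\bar{u}(s,x) = -\sigma(x)DU(s,x)$ and expanding $|\sigma(DW-DU)|^{2} = |\sigma DW|^{2} - 2\sigma^{2}DW\,DU + |\sigma DU|^{2}$, the inequality collapses, pointwise in $(s,\omega)$, to the assertion
\[
\tfrac{1}{2}v(s)^{2} + 2\,\sigma(\hat{X}^{\varepsilon}(s))DW(s,\hat{X}^{\varepsilon}(s))\,v(s) + 2\bigl|\sigma(\hat{X}^{\varepsilon}(s))DW(s,\hat{X}^{\varepsilon}(s))\bigr|^{2} \;\geq\; 0,
\]
where every term involving $DU$ cancels algebraically. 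This is simply $\tfrac{1}{2}\bigl(v(s) + 2\sigma(\hat{X}^{\varepsilon}(s))DW(s,\hat{X}^{\varepsilon}(s))\bigr)^{2} \geq 0$, which is immediate.

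There is no real obstacle: the only subtle step is the bookkeeping in the algebraic reduction, where one must be careful that the cross-terms between $DW$, $DU$, and $v$ all conspire to form a single perfect square. One may also note that, since the identity holds $\omega$-wise on $[0,\hat{\tau}^{\varepsilon}]$, any integrability hypothesis on $v$ and on $DW\sigma$ is used only to justify that the stochastic integral is well-defined, not to take expectations. This mirrors how analogous bounds in \cite{dupspiwan} were obtained, but avoids the averaging step and thus yields the stronger pathwise conclusion required here.
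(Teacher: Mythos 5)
Your proof is correct and is essentially the paper's argument: both apply It\^{o}'s formula to $W(s,\hat{X}^{\varepsilon}(s))$ and reduce the claim to a pointwise quadratic inequality, the only difference being presentation. The perfect square you exhibit, $\tfrac{1}{2}\bigl(v(s)+2\sigma(\hat{X}^{\varepsilon}(s))DW(s,\hat{X}^{\varepsilon}(s))\bigr)^{2}\geq 0$, is exactly the slack discarded in the paper when it bounds the integrand from below by the $\inf_{v}$ in the min/max representation of $\mathbb{H}$ (whose minimizer is $v^{*}=-2\sigma DW$), so the two computations coincide.
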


\begin{pf}
We make use of the min/max representation
\[
\mathbb{H}(x,p)=\inf_{v}\sup_{u}
\biggl[ p\bigl(b(x)-\sigma(x)u+\sigma(x)v\bigr)-\frac
{1}{2}u^{2}+
\frac{1}{4}v^{2} \biggr].
\]
Assume we use the control $\bar{u}(t,x)=-\sigma(x)DU(t,x)$ for the design of
the scheme and choose $p=DW(t,x)$. Then
\begin{eqnarray*}
&& \inf_{v} \biggl[ DW(t,x) \bigl(b(x)-\sigma(x)\bar{u}(x)+
\sigma(x)v\bigr)-\frac{1}%
{2}\bar{u}(t,x)^{2}+
\frac{1}{4}v^{2} \biggr]
\\
&&\qquad =DW(t,x)b(x)+\sigma^{2}(x)DU(t,x)-2\sigma^{2}(x)DW(t,x)
\\
&&\qquad\quad {}-\frac{1}{2}\bigl\llvert \sigma(x)DU(t,x)\bigr\rrvert ^{2}+
\bigl\llvert \sigma(x)DW(t,x)\bigr\rrvert ^{2}
\\
&&\qquad =DW(t,x)b(x)-\frac{1}{2}\bigl\llvert \sigma(x)DW(t,x)\bigr\rrvert
^{2}-\frac{1}{2}\bigl\llvert \sigma(x) \bigl( DW(t,x)-DU(t,x)
\bigr) \bigr\rrvert ^{2}
\\
&&\qquad =\mathbb{H}\bigl(x,DW(t,x)\bigr)-\frac{1}{2}\bigl\llvert \sigma(x)
\bigl( DW(t,x)-DU(t,x) \bigr) \bigr\rrvert ^{2}.
\end{eqnarray*}

Applying It\^{o}'s formula to $W(s,\hat{X}^{\varepsilon}(s))$ then gives
\begin{eqnarray*}
 && W\bigl(\hat{\tau}^{\varepsilon},\hat{X}^{\varepsilon}\bigl(\hat{
\tau}^{\varepsilon
}\bigr)\bigr)-W(0,y) \\
&&\qquad= \int_{0}^{\hat{\tau}^{\varepsilon}}
\bigl[ \partial_{s}W\bigl(s,\hat {X}^{\varepsilon}(s)\bigr)
\\
&&\hspace*{21pt}\qquad\quad{}+ DW\bigl(s,\hat{X}^{\varepsilon}(s)\bigr) \bigl[ b\bigl(\hat{X}%
^{\varepsilon}(s)
\bigr)-\sigma\bigl(\hat{X}^{\varepsilon}(s)\bigr)\bar{u}\bigl(s,\hat
{X}^{\varepsilon}(s)\bigr)\\
&&\hspace*{137pt}\qquad\qquad\qquad{}+\sigma\bigl(\hat{X}^{\varepsilon}(s)\bigr)v(s) \bigr]
\bigr] \,ds
\\
&& \qquad\quad{}+\int_{0}^{\hat{\tau}^{\varepsilon}}\frac{\varepsilon}{2}\sigma
^{2}\bigl(\hat{X}^{\varepsilon}(s)\bigr)D^{2}W\bigl(s,
\hat{X}^{\varepsilon}(s)\bigr)\,ds\\
&& \qquad\quad{}+\int_{0}^{\hat{\tau}^{\varepsilon}}
\sqrt{\varepsilon}DW\bigl(s,\hat{X}^{\varepsilon
}(s)\bigr)\sigma\bigl(
\hat{X}^{\varepsilon}(s)\bigr)\,dB(s)
\\
&&\qquad \geq \int_{0}^{\hat{\tau}^{\varepsilon}} \biggl[ \frac{1}{2}
\bar{u}\bigl(s,\hat {X}^{\varepsilon}(s)\bigr)^{2}-\frac{1}{4}v(s)^{2}
\biggr] \,ds+\sqrt{\varepsilon }\int_{0}^{\hat{\tau}^{\varepsilon}}DW
\bigl(\hat{X}^{\varepsilon}(s)\bigr)\sigma\bigl(\hat {X}^{\varepsilon}(s)
\bigr)\,dB(s)
\\
&&\qquad\quad{}+\int_{0}^{\hat{\tau}^{\varepsilon}} \biggl[ \partial_{s}W
\bigl(s,\hat {X}^{\varepsilon}(s)\bigr)+\mathbb{H}\bigl(\hat{X}^{\varepsilon}(s),DW
\bigl(s,\hat {X}^{\varepsilon}(s)\bigr)\bigr)\\
&& \hspace*{80pt}\qquad\qquad{}+\frac{\varepsilon}{2}
\sigma^{2}\bigl(\hat{X}^{\varepsilon
}(s)\bigr)D^{2}W\bigl(s,
\hat{X}^{\varepsilon}(s)\bigr) \biggr] \,ds
\\
&&\qquad\quad{}-\frac{1}{2}\int_{0}^{\hat{\tau}^{\varepsilon}}\bigl\llvert
\sigma \bigl(\hat{X}^{\varepsilon}(s)\bigr) \bigl( DW\bigl(s,\hat{X}^{\varepsilon}(s)
\bigr)-DU\bigl(s,\hat {X}^{\varepsilon}(s)\bigr) \bigr) \bigr\rrvert
^{2}\,ds
\\
&&\qquad = \int_{0}^{\hat{\tau}^{\varepsilon}} \biggl[ \frac{1}{2}\bar{u}
\bigl(s,\hat {X}^{\varepsilon}(s)\bigr)^{2}-\frac{1}{4}v(s)^{2}
\biggr] \,ds\\
&&\qquad\quad{}+\sqrt{\varepsilon }\int_{0}^{\hat{\tau}^{\varepsilon}}DW
\bigl(s,\hat{X}^{\varepsilon}(s)\bigr)\sigma \bigl(\hat{X}^{\varepsilon}(s)
\bigr)\,dB(s)
\\
&&\qquad\quad{} +\int_{0}^{\hat{\tau}^{\varepsilon}}\mathcal{G}^{\varepsilon
}[W]
\bigl(s,\hat{X}^{\varepsilon}(s)\bigr)\,ds\\
&&\qquad\quad{}-\frac{1}{2}\int
_{0}^{\hat{\tau
}^{\varepsilon}}\bigl\llvert \sigma\bigl(
\hat{X}^{\varepsilon}(s)\bigr) \bigl( DW\bigl(s,\hat{X}^{\varepsilon}(s)\bigr)-DU
\bigl(s,\hat{X}^{\varepsilon}(s)\bigr) \bigr) \bigr\rrvert ^{2}\,ds.
\end{eqnarray*}

Rearranging this expression completes the proof of the lemma.
\end{pf}
\end{appendix}

%
%
%
%
%





\printaddresses
\end{document}